\documentclass[a4paper,11pt]{amsart}

\usepackage[T1,T2A]{fontenc}
\usepackage[russian,english,]{babel}
\usepackage{amsmath}
\usepackage{amsfonts}
\usepackage{amssymb}
\usepackage{mathrsfs}
\usepackage{amsthm}
\usepackage{graphicx}
\usepackage{color}
\graphicspath{{./Pictures/}}

\newcommand{\df}{\buildrel\mathrm{def}\over=}
\newcommand{\Bell}{\boldsymbol{B}}

\newcommand{\LL}{{\scriptscriptstyle\mathrm L}}
\newcommand{\eps}{\varepsilon}

\newcommand{\const}{\mathrm{const}}
\DeclareMathOperator{\sign}{sign}
\newcommand{\BMO}{\mathrm{BMO}}

\newcommand{\half}{\tfrac12}
\newcommand{\eq}[2]{\begin{equation}\label{#1} #2 \end{equation}}

\newtheorem{Le}{Lemma}[section]
\newtheorem{Prop}[Le]{Proposition}
\newtheorem{Th}[Le]{Theorem}

\theoremstyle{definition}

\numberwithin{equation}{section}

\newcommand{\ut}[1]{^{\scriptscriptstyle\text{\rm #1}}}

\newcommand{\Oe}{\Omega_\eps}
\newcommand{\Om}[1]{\Omega\ut{#1}_\eps}

\newcommand\mytitle{Some extremal problem for martingale transforms. \rm{III}}
\newcommand\mytitleshort{Extremal problem for martingale transforms}

\begin{document}
\title[\mytitleshort]{\mytitle}

\author{V.~I.~Vasyunin}

\subjclass[2020]{Primary 42B35, 60G46}
\keywords{Bellman function, martingale transform, 
diagonally concave function}

\begin{abstract}
This paper is a direct continuation of papers~\cite{First} and~\cite{Second}. By this reason, neither the introductory part of the paper nor the bibliography are duplicated here. However, for the reader's convenience, the formulas from the previous papers that are referenced are given in a special addendum at the end of the paper under their original numbers.

In this paper, we investigate new local foliations: major and medium pockets and several bilinearity regions. The appearance of such local foliations is illustrated by further analysis of examples with third-degree polynomials. At the end of the paper, a complete overview of possible foliations is given when the boundary values are third-degree polynomials.
\end{abstract}

\thanks{This work is supported by the Russian Science Foundation, grant number 24-71-10011 (https://rscf.ru/project/24-71-10011/).} 
\maketitle

\setcounter{section}{12}
\setcounter{figure}{24}

\section{Once more on horizontal herringbones}

We already know quite a lot about horizontal herringbones. The general theory was described in Section~4 of~\cite{First}, then applied to special cases in Sections~5 and~6 of the same paper and in Section~8 of the next paper~\cite{Second}. Our knowledge has been enriched by the analysis of examples with boundary conditions in the form of third-degree polynomials.

Now, beginning the third paper in this series, we attempt to identify some further general properties of horizontal herringbones that will allow us, on the one hand, to avoid repeating the same arguments when considering specific examples, and, on the other hand, to understand which parameters of the boundary values determine the foliation in the general case.

In this section, we consider the question of at which boundary points horizontal herringbones may terminate. Clearly, there are four symmetric cases: (1) the left herringbone terminates on the upper boundary, (2) the left herringbone terminates on the lower boundary, (3) the right herringbone terminates on the upper boundary, and (4) the right herringbone terminates on the lower boundary. We shall, of course, study in detail only one case, namely the first of the listed ones; however, for convenience, we provide the statements of the assertions for all four cases, as is done, for example, in Propositions~6.1--6.4.

\begin{Th}
\label{160310}
For a point $(u+\eps,\,\eps)$ to be a tip of a left herringbone in the strip $\Oe,$ the following conditions must be satisfied\textup:
\begin{enumerate}
\item
\label{140301}
$D_+(u,\eps)=0;$
\item
\label{140302}
$f''_+(u+\eps)-f''_-(u-\eps)-2\eps f'''_+(u+\eps)\ge0;$
\item
\label{140303}
$f'''_+(u+\eps)\ge0.$
\end{enumerate}
\end{Th}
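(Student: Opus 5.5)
The plan is to work directly with the explicit description of a left horizontal herringbone from Section~4 of~\cite{First}, written in the addendum formulas. Recall its structure. The herringbone is a family of extremal segments, parametrized by a point moving along its spine. On each segment the candidate $B$ is linear. The coefficients of these linear functions satisfy the ordinary differential equations obtained there, and the boundary data $f_\pm$ enter through their values and derivatives at the endpoints of the segments. I expect these equations to carry an expression $D_+(u,\eps)$ that controls the length of the segment ending at the upper boundary point $(u+\eps,\eps)$ (the formula derived in Section~4 of~\cite{First}). A tip is a point where the herringbone closes up: the segments issued from the spine toward the upper boundary shrink to zero length there.

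Condition~\eqref{140301} comes from passing to the limit along the herringbone as the parameter approaches the tip. I would write the segment length, or equivalently the distance from the spine to the upper boundary along the family, as an explicit function of the parameter through $f_\pm$. Then I would show that it can vanish only where $D_+(u,\eps)=0$. As a cross-check, the requirement that the gradient of $B$ stay continuous across the spine forces the same equation as the segment degenerates. This step should be a direct computation from the herringbone formulas.

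Conditions~\eqref{140302} and~\eqref{140303} should come from the concavity requirements of the candidate. These are the local (diagonal) concavity conditions for the linear-on-segments function, which for a herringbone reduce to the sign of a certain torsion-type expression along the family. First I would write this expression near the tip and use $D_+=0$ to eliminate lower-order terms. I expect it to factor as a product in which one factor is $f''_+(u+\eps)-f''_-(u-\eps)-2\eps f'''_+(u+\eps)$, essentially the derivative of $D_+$ in $u$ at the tip. The other factor, coming from the sign of the rate at which the segments shrink, would involve $f'''_+(u+\eps)$.

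The herringbone must lie on the correct side of the tip, to the left for a left herringbone. Combining this orientation with the sign condition, $D_+$ must change sign in the appropriate direction. That gives~\eqref{140302}.

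The main obstacle is~\eqref{140303}, which has to come from a second-order analysis at the tip. Both the segment length and the concavity expression vanish there, so the sign must be read from the next term of the Taylor expansion in the parameter. My first attempt would be to expand the concavity condition along the degenerating segments to second order around $(u+\eps,\eps)$ and show that its leading coefficient is a positive multiple of $f'''_+(u+\eps)$. If that fails, I would instead compare with the neighbouring foliation that must adjoin the tip along the upper boundary, namely an angle or a fan issuing from the boundary point. The concavity condition on the upper boundary there is governed by $f'''_+$, as in Propositions~6.1--6.4, which would give $f'''_+(u+\eps)\ge0$.

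The other three cases follow by the symmetries $x\mapsto -x$ and interchanging $f_+$ and $f_-$.
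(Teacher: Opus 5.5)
Your plan never identifies the mechanism that actually controls a tip, and the steps you propose in its place are either unavailable or unjustified.

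\textbf{Condition \eqref{140301}.} The spine is not an explicit function of $f_\pm$. It is the shift by $\eps$ of an integral curve of the autonomous field~(5.3), equivalently a solution of~(4.32), and neither can be integrated in closed form. So there is no ``segment length as an explicit function of the parameter'' to compute. What gives \eqref{140301} is a simple observation about the field. On the line $x_2=\eps$ one has $\dot x_1=-\dot x_2=2\eps^2D_+(x_1,\eps)$, so every integral curve through a nonsingular point of the upper boundary meets it with slope $-1$. Likewise, the right-hand side of~(4.32) equals $-1$ at $T=\eps$ unless $D\ut{L}_+$ vanishes. But the spine of a left herringbone must have slope in $[0,1]$. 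Hence the tip must come from a stationary point of~(5.3), i.e.\ $D_+(u,\eps)=0$.

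\textbf{Conditions \eqref{140302} and \eqref{140303}.} The relevant constraints are not a torsion-type concavity expression. They are the admissibility conditions $D_+\ge0$, $D_-\ge0$ along the spine, combined with the linearization of~(5.3) at this stationary point. Put $A=f''_+(u+\eps)-f''_-(u-\eps)-2\eps f'''_+(u+\eps)$.

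The Jacobian~\eqref{150301} has eigenvalues $\frac12(s\pm\sqrt{s^2+8s})$ with $s=1+\eps\kappa_+^\LL$. Only the eigendirection with slope~\eqref{150304} can have slope in $[0,1]$, and that forces $s\ge1$, i.e.\ $\kappa_+^\LL\ge0$, i.e.\ $A\,f'''_+(u+\eps)\ge0$ when $A\ne0$. So the ``product'' you anticipate does appear, but it comes from the slope constraint on the incoming integral curve, not from concavity.

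Next, compare~\eqref{150304} with the slope~\eqref{150305} of the curve $D_+=0$. For $s>1$ this shows the spine enters the strip to the right of that curve. Then $D_+\ge0$ along the spine forces $\partial D_+/\partial x_1(u,\eps)\ge0$, which is~\eqref{140302}. Your ``orientation'' argument cannot replace this comparison. Both the spine and the curve $D_+=0$ leave the tip downward to the left, and knowing that the herringbone lies to the left of the tip says nothing about which side of $\{D_+=0\}$ the spine is on.

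You also do not treat the degenerate case $A=0$. There $\kappa_+^\LL$ is undefined, and a product condition says nothing about $f'''_+$. The paper closes this case with the second admissibility condition. Since $D_-(u,\eps)=D_+(u,\eps)+f''_+(u+\eps)-f''_-(u-\eps)$, having $A=0$ and $f'''_+(u+\eps)<0$ gives $D_-(u,\eps)=2\eps f'''_+(u+\eps)<0$, which is impossible.

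Your fallback of reading off \eqref{140303} from an adjacent angle or fan does not work either. Nothing in the hypothesis fixes the neighbouring foliation; in the paper's situations the tip borders a simple foliation. The necessary conditions have to come from the herringbone itself.
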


\begin{proof}
As usual for the study of left herringbones, we consider field~(5.3), one of whose integral curves, when shifted to the right by $\eps$, gives the spine of a herringbone. Since at any nonsingular point of field~(5.3) the integral curve intersects the upper boundary of the strip with slope $-1$, while the spine of any left horizontal herringbone intersecting the upper boundary must have slope in the interval from zero to one, we conclude that the point $(u,\eps)$ must be a stationary point of field~(5.3), that is, condition~\eqref{140301} holds.

The Jacobi matrix at such a point was already examined in Section~5 (see~\cite{First}). 
For the reader's convenience, we reproduce formulas (5.17) and (5.18), adopting the 
notation introduced in formula (8.20) (\cite{Second}):
\eq{150301}{
J(u,\eps)=\eps\big(f_+''(u+\eps)-f_-''(u-\eps)-2\eps f_+'''(u+\eps)\big)
\begin{pmatrix}
\phantom{i}1&1-3s
\\
-1&s-1
\end{pmatrix},
}
where $s=1+\eps\kappa_+^\LL$ and
\eq{150302}{
\kappa_+^\LL=\frac{2f_+'''(u+\eps)}{f_+''(u+\eps)-f_-''(u-\eps)-2\eps f_+'''(u+\eps)}\,.
}
We recall that, in this context, the point $(u,\eps)$ is assumed to lie on the curve $D_+=0$.

We have already computed the eigenvalues and eigenvectors of the matrix in~\eqref{150301} on several occasions; they are the roots of the polynomial $\lambda^2-s\lambda-2s$, namely
\eq{150303}{
\lambda=\frac{s\pm\sqrt{s^2+8s}}2\quad\text{and the corresponding vectors}\quad
\begin{pmatrix}
s-1-\lambda
\\
1
\end{pmatrix}.
}
We are not interested in non-real eigenvalues, since in that case $(u,\eps)$ is a spiral stationary point, and no integral curve approaching it with monotonically increasing slope in the interval $[0,1]$ can exist—--and it is precisely such curves that are of interest to us. Furthermore, we disregard the eigenvalue in which the square root is taken with the plus sign, because then $s-\lambda=\half(s-\sqrt{s^2+8s})\le0$, and $s-1-\lambda\le-1$. Thus the corresponding eigenvector has negative slope.

Thus, we shall compare the integral curve approaching the point $(u,\eps)$ with slope
\eq{150304}{
\frac1{s-1-\lambda}=\frac2{s-2+\sqrt{s^2+8s}}\,,
}
and the curve $D_+=0$, whose slope at this point equals
\eq{150305}{
\frac1{2s-1}\,.
}
The latter assertion has been verified on numerous occasions, but we shall repeat it once more. It follows simply by differentiating the identity \hbox{$2x_2D_+=0$:}
\eq{150306}{
\big[f''_+-f''_--2x_2f'''_+\big]dx_1-\big[f''_+-f''_-+2x_2f'''_+\big]dx_2=0.
}

As already mentioned, the slope of our integral curve must lie in the interval $[0,1]$, which, in view of formula~\eqref{150304}, can be rewritten as
$$
s-2+\sqrt{s^2+8s}\ge2\,.
$$
This condition is in turn equivalent to the inequality $s\ge1$, or, what is the same, $\kappa_+^\LL\ge0$.

Let us also note that for $s>1$ the strict inequality
$$
\frac2{s-2+\sqrt{s^2+8s}}>\frac1{2s-1}
$$
holds, meaning that the slope of the integral curve of interest at the point $(u,\eps)$ is strictly greater than the slope of the curve $D_+=0$. This implies that, in a neighbourhood of $(u,\eps)$, our integral curve enters the strip $\Oe$ to the right of the curve $D_+=0$.

Let us now turn to the conditions~\eqref{140302} and~\eqref{140303}, which we have left aside for the moment. Suppose that condition~\eqref{140302} is not satisfied, i.\,e., the strict inequality
\eq{150307}{
\frac{\partial D_+}{\partial x_1}(u,\eps)<0
}
holds. This implies, in particular, first, that the quantities $\kappa_+^\LL$ and $s$ are well-defined, and second, that to the right of the curve $D_+=0$ lies the domain where $D_+<0$. Consequently, the integral curve of interest passing through that domain cannot give rise to the spine of a left herringbone. Therefore, condition~\eqref{140302} must necessarily be fulfilled.

If the inequality in~\eqref{140302} is strict, then the quantity $\kappa_+^\LL$ is well-defined by formula~\eqref{150302}, and the proved inequality $\kappa_+^\LL\ge0$ is equivalent to condition~\eqref{140303}.

It remains to consider the degenerate case
\eq{230701}{
f_+''(u+\eps)-f_-''(u-\eps)-2\eps f_+'''(u+\eps)=0\,.
}
Suppose that condition~\eqref{140303} does not hold, that is, $f'''_+(u+\eps)<0$.
In this case, equality~\eqref{230701} implies the relation
\eq{150308}{
f_+''(u+\eps)<f_-''(u-\eps)\,,
}
whence it follows that
\begin{align*}
2\eps D_-(u,\eps)&=f'_+(u+\eps)-f'_-(u-\eps)-
2\eps f''_-(u-\eps)
\\
&<f'_+(u+\eps)-f'_-(u-\eps)-
2\eps f''_+(u+\eps)=2\eps D_+(u,\eps)=0\,.
\end{align*}
Thus, the necessary condition $D_-(u,\eps)\ge0$ fails, and consequently the point $(u+\eps,\eps)$ cannot be a tip of a herringbone. This verifies the necessity of condition~\eqref{140303}.
\end{proof}

We now formulate, without proof, three symmetric statements.

\begin{Th}
\label{160311}
For the point $(u+\eps,\,-\eps)$ to be a tip of a left herringbone in the strip $\Oe,$ the following conditions must be satisfied\textup:
\begin{enumerate}
\item
\label{160301}
$D_-(u,-\eps)=0;$
\item
\label{160302}
$f''_+(u-\eps)-f''_-(u+\eps)+2\eps f'''_-(u+\eps)\le0;$
\item
\label{160303}
$f'''_-(u+\eps)\ge0.$
\end{enumerate}
\end{Th}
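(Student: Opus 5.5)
The plan is to mirror the proof of Theorem~\ref{160310} under the reflection $x_2\mapsto -x_2$ of the strip $\Oe$. This symmetry swaps the upper and lower boundaries, hence swaps $f_+$ and $f_-$. It also interchanges the roles of $D_+$ and $D_-$ and changes the sign of $x_2$ in all formulas. The spine of a left herringbone is again obtained by shifting an integral curve of the relevant direction field to the right by $\eps$, now the field attached to the lower boundary, symmetric to field~(5.3). At a nonsingular boundary point this curve meets the lower boundary with slope $+1$, since the reflection maps slope $-1$ to $+1$. A left-herringbone spine ending on the lower boundary must instead have slope in the reflected interval $[-1,0]$. So $(u,-\eps)$ must be a stationary point, which gives condition~\eqref{160301}: $D_-(u,-\eps)=0$.

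Next I linearize at $(u,-\eps)$. The Jacobi matrix should be the reflected analogue of~\eqref{150301}, with scalar factor proportional to $f''_+(u-\eps)-f''_-(u+\eps)+2\eps f'''_-(u+\eps)$. The parameter becomes $s=1+\eps\kappa_-^\LL$, where $\kappa_-^\LL$ has numerator $2f'''_-(u+\eps)$ and denominator $-(f''_+(u-\eps)-f''_-(u+\eps)+2\eps f'''_-(u+\eps))$, with signs arranged so the characteristic polynomial is again $\lambda^2-s\lambda-2s$. This should be checked by conjugating with $\operatorname{diag}(1,-1)$.

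The case analysis then follows Theorem~\ref{160310}:
\begin{itemize}
\item Spiral points are discarded, as is the eigenvalue with the plus sign, since its eigenvector has the wrong slope.
\item Requiring the remaining eigenvector's slope to lie in $[-1,0]$ gives $s\ge1$.
\item Comparing this slope with the slope $-1/(2s-1)$ of the curve $D_-=0$, obtained by differentiating $2x_2D_-=0$, shows that the curve enters $\Oe$ to the right of $D_-=0$.
\item Condition~\eqref{160302} is the statement that $D_-\ge0$ to the right of that curve. If the inequality fails strictly, the integral curve passes through the region $D_-<0$ and cannot be a spine.
\item When~\eqref{160302} is strict, $s\ge1$ is equivalent to~\eqref{160303}.
\end{itemize}

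In the degenerate case $f''_+(u-\eps)-f''_-(u+\eps)+2\eps f'''_-(u+\eps)=0$, suppose $f'''_-(u+\eps)<0$. Then $f''_-(u+\eps)<f''_+(u-\eps)$. Substituting this into the expression for $D_+(u,-\eps)$ and comparing with $D_-(u,-\eps)=0$, exactly as in the displayed chain in the proof of Theorem~\ref{160310}, should give $D_+(u,-\eps)<0$. That violates the necessary condition $D_+\ge0$.

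The main obstacle I expect is bookkeeping of signs under the reflection. This includes the sign of $x_2=-\eps$ inside $D_\pm$, the orientation of "to the right", and the direction of inequality~\eqref{160302}, which is reversed relative to~\eqref{140302}. I would settle these by writing the reflected functions $\tilde f_\pm(t)=f_\mp(t)$ explicitly, with $x_2\mapsto-x_2$. Then I would apply Theorem~\ref{160310} verbatim to $\tilde f$ and translate its three conditions back.
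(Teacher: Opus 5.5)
Your proposal is correct and is exactly the symmetry argument the paper has in mind; the paper states this theorem without proof as the mirror image of Theorem~\ref{160310}, and with $\tilde f_\pm=f_\mp$, $x_2\mapsto-x_2$ one gets $\tilde D_\pm(x_1,x_2)=D_\mp(x_1,-x_2)$, so the three conditions of Theorem~\ref{160310} translate precisely into the stated ones, and your Jacobi data agree with \eqref{211101}--\eqref{211102}. One small correction: there is no separate ``lower-boundary'' field---the spine still comes from field~(5.3) itself, which this reflection maps to itself up to reversal of time, and that is why Theorem~\ref{160310} can be applied verbatim.
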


\begin{Th}
\label{160312}
For the point $(u-\eps,\,\eps)$ to be a tip of a right herringbone in the strip $\Oe,$ the following conditions must be satisfied\textup:
\begin{enumerate}
\item
\label{160304}
$D_+(u,-\eps)=0;$
\item
\label{160305}
$f''_+(u-\eps)-f''_-(u+\eps)+2\eps f'''_+(u-\eps)\ge0;$
\item
\label{160306}
$f'''_+(u-\eps)\le0.$
\end{enumerate}
\end{Th}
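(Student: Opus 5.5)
The plan is to obtain Theorem~\ref{160312} from Theorem~\ref{160310} by the reflection symmetry $x_1\mapsto -x_1$, and to check the result against a direct repetition of the proof of Theorem~\ref{160310}.

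\emph{The reflection.} Put $\tilde f_\pm(t)=f_\pm(-t)$. The map $(x_1,x_2)\mapsto(-x_1,x_2)$ sends the strip $\Oe$ to itself and keeps the upper boundary as the upper boundary. It also interchanges left and right herringbones. Hence the point $(u-\eps,\eps)$ is the tip of a right herringbone for $f_\pm$ exactly when $(-u+\eps,\eps)=(\tilde u+\eps,\eps)$, with $\tilde u=-u$, is the tip of a left herringbone for $\tilde f_\pm$.

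\emph{Transferring the three conditions.} Apply Theorem~\ref{160310} to $\tilde f_\pm$ at $\tilde u$ and use the derivative rules
\[
\tilde f'_\pm(t)=-f'_\pm(-t),\qquad \tilde f''_\pm(t)=f''_\pm(-t),\qquad \tilde f'''_\pm(t)=-f'''_\pm(-t).
\]
\begin{itemize}
\item Condition~\eqref{140303} for $\tilde f$ reads $\tilde f'''_+(\tilde u+\eps)\ge0$. This becomes $-f'''_+(u-\eps)\ge0$, which is~\eqref{160306}.
\item Condition~\eqref{140302} for $\tilde f$ becomes
\[
f''_+(u-\eps)-f''_-(u+\eps)+2\eps f'''_+(u-\eps)\ge0,
\]
which is~\eqref{160305}.
\item Condition~\eqref{140301} for $\tilde f$ is $\tilde D_+(\tilde u,\eps)=0$. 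Using the form of $D_\pm$ from the proof of Theorem~\ref{160310}, namely $2x_2D_+=f'_+(x_1+x_2)-f'_-(x_1-x_2)-2x_2f''_+(x_1+x_2)$, we get
\[
2\eps\tilde D_+(\tilde u,\eps)=-f'_+(u-\eps)+f'_-(u+\eps)-2\eps f''_+(u-\eps).
\]
\end{itemize}

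\emph{Matching with $D_+(u,-\eps)$.} Evaluating the defining formula of $D_+$ at $x_2=-\eps$ gives
\[
-2\eps D_+(u,-\eps)=f'_+(u-\eps)-f'_-(u+\eps)+2\eps f''_+(u-\eps).
\]
This is exactly $-2\eps\tilde D_+(\tilde u,\eps)$, so~\eqref{140301} for $\tilde f$ turns into $D_+(u,-\eps)=0$, which is~\eqref{160304}. This matching is the main point to verify: it is consistent with the statement's use of $D_+(u,-\eps)$ as the relevant field function for right herringbones, i.e.\ the field used for right herringbones being the $x_1$-reflection of field~(5.3).

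\emph{Direct check.} Alternatively, one can repeat the proof of Theorem~\ref{160310} directly, in four steps.
\begin{enumerate}
\item Stationarity of the field at $(u,-\eps)$ follows from the boundary slope argument, now with slope $+1$ in place of $-1$.
\item The Jacobian is the reflected version of~\eqref{150301}, with $s=1-\eps\cdot 2f'''_+(u-\eps)/(\ldots)$.
\item The eigenvector and slope analysis (slope now in $[-1,0]$) forces $s\ge1$.
\item The sign of $\partial D_+/\partial x_1$ gives~\eqref{160305}.
\end{enumerate}
The degenerate case, where the expression in~\eqref{160305} vanishes, is handled as before. If $f'''_+(u-\eps)>0$, then $f''_+(u-\eps)<f''_-(u+\eps)$, and this would make the companion quantity $D_-$ negative, contradicting the necessary condition $D_-\ge0$.
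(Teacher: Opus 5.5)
Your reflection argument is correct and is exactly how the paper handles this statement. Theorem~\ref{160312} is listed among the ``symmetric statements'' given without proof, i.e.\ it is the image of Theorem~\ref{160310} under $x_1\mapsto -x_1$, $\tilde f_\pm(t)=f_\pm(-t)$, and your identity $\tilde D_+(-u,\eps)=D_+(u,-\eps)$, together with the sign flips of the odd derivatives, transfers all three conditions correctly. The supplementary ``direct check'' is unnecessary and slightly garbled: the stationary point of the $x_1$-reflected field is $(u,\eps)$ on the upper boundary, where that field has slope $+1$, and $D_+(u,-\eps)$ is merely the value of the defining formula there, not a point on the lower boundary. This does not affect the reflection argument, which is a complete proof on its own.
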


\begin{Th}
\label{160313}
For the point $(u-\eps,\,-\eps)$ to be a tip of a right herringbone in the strip $\Oe,$ the following conditions must be satisfied\textup:
\begin{enumerate}
\item
\label{160307}
$D_-(u,\eps)=0;$
\item
\label{160308}
$f''_+(u+\eps)-f''_-(u-\eps)-2\eps f'''_-(u-\eps)\le0;$
\item
\label{160309}
$f'''_-(u-\eps)\le0.$
\end{enumerate}
\end{Th}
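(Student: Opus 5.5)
The statement immediately above is Theorem~\ref{160313}: the right herringbone with tip at $(u-\eps,-\eps)$ on the lower boundary. I would not redo the computation. Instead I would reduce it to Theorem~\ref{160310} by the symmetries of the strip $\Oe$, exactly as Propositions~6.1--6.4 were obtained from one another. Two reflections are available.

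\emph{The reflection $x_2\mapsto -x_2$.} It exchanges the upper and lower boundaries and hence the roles of $f_+$ and $f_-$. It also turns $D_+$ into $D_-$ up to sign, and maps the spine of a herringbone to a spine of the same (left/right) type.

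\emph{The reflection $x_1\mapsto -x_1$.} It changes left herringbones into right ones. In terms of the data it replaces $f_\pm(t)$ by $\tilde f_\pm(t)=f_\pm(-t)$, so third derivatives change sign while first and second derivatives of even order keep their sign.

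The key steps, in order, are as follows.

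\textbf{Step 1: check invariance.} I would verify from the definitions of $D_\pm$ (formula~(5.3) and the addendum) that both reflections map the extremal problem to itself, with boundary data transformed as above. I would also check that the notion of a herringbone tip, and the required monotone slope in $[0,1]$ (or $[-1,0]$ after the $x_1$-reflection), is transported accordingly.

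\textbf{Step 2: compose the reflections.} Applying both reflections, with data $\tilde f_\pm(t)=f_\mp(-t)$, sends a right herringbone with tip $(u-\eps,-\eps)$ to a left herringbone with tip $(\tilde u+\eps,\eps)$, where $\tilde u=-u$.

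\textbf{Step 3: transcribe the conditions.} I would apply Theorem~\ref{160310} to the new data and translate back, using
\[
\tilde f_+''(\tilde u+\eps)=f_-''(u-\eps),\qquad
\tilde f_-''(\tilde u-\eps)=f_+''(u+\eps),\qquad
\tilde f_+'''(\tilde u+\eps)=-f_-'''(u-\eps).
\]
Condition~(3) of Theorem~\ref{160310} becomes $f_-'''(u-\eps)\le0$. Condition~(2) becomes $f_-''(u-\eps)-f_+''(u+\eps)+2\eps f_-'''(u-\eps)\ge0$, which is the claimed inequality after multiplying by $-1$. Condition~(1), $\tilde D_+(\tilde u,\eps)=0$, becomes $D_-(u,\eps)=0$ once $D_\pm$ is written out.

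\textbf{Main obstacle.} The delicate point is Step 1: confirming the exact transformation law of $D_\pm$ under the composite reflection, namely that $\tilde D_+(-u,\eps)$ is a nonzero multiple of $D_-(u,\eps)$. Equally important is that the sign requirements used in the proof of Theorem~\ref{160310} map to the correct ones. These are the requirement $D_-\ge0$ in the degenerate case, which must correspond to $D_+\ge0$, and the orientation of the spine slopes.

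\textbf{Fallback.} If this bookkeeping becomes unclear, I would repeat the proof of Theorem~\ref{160310} directly for the field defining right herringbones near the lower boundary. The steps are the same: show that $(u,\eps)$ is a stationary point, so $D_-(u,\eps)=0$; compute the Jacobian and the admissible eigenvector slope; compare it with the slope of the curve $D_-=0$; and treat the degenerate case, where failure of~(3) forces $D_+<0$.
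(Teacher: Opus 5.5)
Your symmetry reduction is correct and matches the paper, which states Theorem~\ref{160313} without proof as the image of Theorem~\ref{160310} under $(x_1,x_2)\mapsto(-x_1,-x_2)$, i.e.\ with data $\tilde f_\pm(t)=f_\mp(-t)$. Your transcription of conditions (2) and (3) is right, and the step you flag as the main obstacle is in fact immediate: under $x_1\mapsto-x_1$ all odd-order derivatives change sign (including the first, contrary to your wording), which gives exactly $\tilde D_\pm(-u,\eps)=D_\mp(u,\eps)$. That identity yields condition (1) and also matches the requirement $\tilde D_-\ge0$ with $D_+(u,\eps)\ge0$ in the degenerate case.
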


Let us now turn to analogous sufficient conditions for the construction of a horizontal herringbone with a tip on the boundary. It turns out that if, in conditions~\eqref{140302} and~\eqref{140303} of the preceding statements, we replace the non-strict inequalities by strict ones, we obtain sufficient conditions for the possibility of constructing the corresponding horizontal herringbones.

\begin{Th}
\label{170301}
To be able to construct a left herringbone with the tip at the point $(u+\eps,\,\eps)$ in the strip $\Oe,$ it is sufficient that the following conditions be satisfied\textup:
\begin{enumerate}
\item
\label{170302}
$D_+(u,\eps)=0;$
\item
\label{170303}
$f''_+(u+\eps)-f''_-(u-\eps)-2\eps f'''_+(u+\eps)>0;$
\item
\label{170304}
$f'''_+(u+\eps)>0.$
\end{enumerate}
\end{Th}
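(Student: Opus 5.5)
Under conditions \eqref{170302}--\eqref{170304} I would build the herringbone explicitly from the local phase portrait of field~(5.3) near the stationary point $(u,\eps)$, running the necessity argument of Theorem~\ref{160310} in reverse.

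\emph{Step 1: the stationary point is a saddle.} By \eqref{170302}, $(u,\eps)$ lies on $D_+=0$ and is a stationary point of field~(5.3). Condition~\eqref{170303} makes the scalar factor in~\eqref{150301} strictly positive, so $\kappa_+^\LL$ is well defined by~\eqref{150302}, and by \eqref{170304} $\kappa_+^\LL>0$, that is, $s>1$. The eigenvalues~\eqref{150303} are then real and distinct. Since $\lambda_+\lambda_-=-2s<0$, they have opposite signs, so the point is a nondegenerate saddle. By the stable/unstable manifold theorem, exactly one integral curve of field~(5.3) enters $(u,\eps)$ tangent to the eigenvector belonging to $\lambda_-=\half(s-\sqrt{s^2+8s})$. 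Its slope is~\eqref{150304}, which lies strictly inside $(0,1)$ for $s>1$.

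\emph{Step 2: the curve lies in $\{D_+>0\}$.} For $s>1$ the strict inequality between~\eqref{150304} and~\eqref{150305} holds. Hence near $(u,\eps)$ this separatrix enters $\Oe$ strictly to the right of the curve $D_+=0$. By~\eqref{150306} and the strict inequality \eqref{170303}, $\partial D_+/\partial x_1(u,\eps)>0$, so the region to the right of $D_+=0$ is $\{D_+>0\}$, and near the tip the separatrix stays there. I would also check $D_->0$ along it. At the point itself, \eqref{170302} and \eqref{170303} give $2\eps D_-(u,\eps)=2\eps\big(D_-(u,\eps)-D_+(u,\eps)\big)=2\eps\big(f''_+(u+\eps)-f''_-(u-\eps)\big)>4\eps^2 f'''_+(u+\eps)>0$. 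By continuity $D_->0$ nearby.

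\emph{Step 3: assemble the herringbone.} Shift the separatrix to the right by $\eps$ and take it as the spine. Then attach the two families of segments (the standard left-herringbone construction of Section~4 of~\cite{First}) to obtain a local foliation in a neighbourhood of the tip $(u+\eps,\eps)$. The required diagonal concavity (sign of the torsion-type quantities, as in Section~4) reduces to the positivity of $D_\pm$ along the spine, which holds by Step~2. Monotonicity of the spine's slope within $[0,1]$ near the tip follows from the analytic form of the separatrix and the fact that the tangent slope lies strictly inside $(0,1)$.

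The main obstacle is Step~3. Two things need care there. First, the separatrix is tangent to a direction transversal to the boundary and not to $D_+=0$, and this must be used to show that the segments emanating from spine points near the tip really fill a full neighbourhood in $\Oe$ without overlapping. Second, the possible singular behaviour of field~(5.3) at the saddle, where the spine is only $C^1$ a priori, must not spoil the concavity check at the tip itself. I would handle this with a first-order expansion of the spine, $x_2-\eps=k(x_1-u)+o(x_1-u)$ with $k$ given by~\eqref{150304}, and by checking the herringbone inequalities in the limit at the tip.
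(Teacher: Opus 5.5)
Your proposal is correct and is essentially the paper's proof. The integral curve leaving the saddle $(u,\eps)$ with slope \eqref{150304} lies, for $s>1$, to the right of $D_+=0$, where $D_+>0$ by \eqref{170303}. Positivity $D_-(u,\eps)=\bigl[f''_+(u+\eps)-f''_-(u-\eps)-2\eps f'''_+(u+\eps)\bigr]+2\eps f'''_+(u+\eps)>0$ then extends along the curve by continuity.

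The paper takes your Step~3 as given by the general theory of Section~4: the task reduces to exhibiting such an integral curve through the region where $D_+>0$ and $D_->0$. The extra concerns you raise there, including the monotonicity-of-slope remark (which is only loosely justified), are therefore not needed.
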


\begin{proof}
Our task reduces to showing the existence of an integral curve $\ell_\eps$ of the field from~(5.3) that enters the strip $\Oe$ through the point $(u,\eps)$ and, in some neighbourhood of this point, passes through the domain where the inequalities $D_+>0$ and $D_->0$ hold. Most of the technical material has already been prepared in the proof of Theorem~\ref{160310}. Thus, we know that the curve $\ell_\eps$ leaves the point $(u,\eps)$ with the slope~\eqref{150304}, which is greater than the slope~\eqref{150305} of the curve $D_+=0$ whenever $\kappa_+^\LL>0$, as guaranteed by conditions~\eqref{170303} and~\eqref{170304}. Consequently, the curve $\ell_\eps$ lies to the right of the curve $D_+=0$. To the right of this curve lies the domain where $D_+>0$, which also follows from condition~\eqref{170303}. 
The inequality $D_->0$ is ensured by the combination of conditions~\eqref{170303} and~\eqref{170304}. Indeed,
$$
\begin{aligned}
D_-(u,\eps)&=D_+(u,\eps)+\big[f''_+(u+\eps)-f''_-(u-\eps)\big]=
\\
&=\big[f''_+(u+\eps)-f''_-(u-\eps)-2\eps f'''_+(u+\eps)\big]+2\eps f'''_+(u+\eps)>0\,.
\end{aligned}
$$
By continuity, the positivity of the function $D_-$ is preserved in some neigh\-bor\-hood of the point $(u,\eps)$. This argument completes the proof.
\end{proof}

The statements of the symmetric propositions are obvious.

\begin{Th}
\label{170305}
To be able to construct a left herringbone with the tip at the point $(u+\eps,\,-\eps)$ in the strip $\Oe,$ it is sufficient that the following conditions be satisfied\textup:
\begin{enumerate}
\item
\label{170306}
$D_-(u,-\eps)=0;$
\item
\label{170307}
$f''_+(u-\eps)-f''_-(u+\eps)+2\eps f'''_-(u+\eps)<0;$
\item
\label{170308}
$f'''_-(u+\eps)>0.$
\end{enumerate}
\end{Th}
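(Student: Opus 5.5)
The plan is to transfer Theorem~\ref{170301} by the symmetry of the problem rather than redo the analysis. We need the reflection $x_2\mapsto -x_2$ of the strip $\Oe$ combined with interchanging the roles of $f_+$ and $f_-$. The relevant field for a left herringbone touching the lower boundary is the analogue of field~(5.3) in which $D_-$ and $f_-$ play the part of $D_+$ and $f_+$. Under the substitution $\tilde f_\pm=f_\mp$, $\tilde x_2=-x_2$, the expressions $2x_2D_\pm$ (which depend on $f'_\pm$, $f''_\pm$ and $x_2$) are carried into each other. The point $(u,-\eps)$ corresponds to $(u,\eps)$, and the tip $(u+\eps,-\eps)$ corresponds to $(u+\eps,\eps)$. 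So I would first verify carefully that this transformation maps left herringbones with tips on the lower boundary to left herringbones with tips on the upper boundary. Next I would check that conditions \eqref{170306}--\eqref{170308} become exactly conditions \eqref{170302}--\eqref{170304} for the transformed data.

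If the symmetry bookkeeping turns out to be awkward, I would instead repeat the proof of Theorem~\ref{170301} directly at the lower boundary. The steps are as follows.

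First, condition~\eqref{170306} makes $(u,-\eps)$ a stationary point of the relevant field. Linearizing there gives a Jacobi matrix analogous to~\eqref{150301}, with prefactor $\eps\big(f''_-(u+\eps)-f''_+(u-\eps)-2\eps f'''_-(u+\eps)\big)$, which is positive by~\eqref{170307}. The parameter becomes $s=1+\eps\kappa$, with $\kappa=2f'''_-(u+\eps)$ divided by that positive bracket. Condition~\eqref{170308} then gives $s>1$, so the eigenvalues~\eqref{150303} are real.

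Second, take the eigenvector corresponding to the minus sign in~\eqref{150303}. The integral curve $\ell_\eps$ entering the strip along it has slope of absolute value~\eqref{150304}, which lies in the admissible range. Differentiating $2x_2D_-=0$ as in~\eqref{150306} gives the slope of the curve $D_-=0$ at this point, analogous to~\eqref{150305}. The strict inequality between the two slopes for $s>1$ then puts $\ell_\eps$ on the side of $D_-=0$ where $D_->0$; this sign is fixed by $\partial D_-/\partial x_1\ne0$, which again follows from~\eqref{170307}.

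Third, positivity of $D_+$ near the point. Mirroring the displayed identity in Theorem~\ref{170301}, I would write
$$
D_+(u,-\eps)=D_-(u,-\eps)+\big[f''_-(u+\eps)-f''_+(u-\eps)\big]
=\big[f''_-(u+\eps)-f''_+(u-\eps)-2\eps f'''_-(u+\eps)\big]+2\eps f'''_-(u+\eps).
$$
This is positive by~\eqref{170307} and~\eqref{170308}, and it stays positive nearby by continuity.

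The main obstacle is sign bookkeeping. With $x_2=-\eps$, one must confirm that the definitions of $D_\pm$ produce exactly the combination above, that "to the right of $D_-=0$" is still the region $D_->0$, and that the admissible slope interval has the correct sign at the lower boundary. I would handle all of these by checking them through the reflection map, rather than recomputing each from scratch.
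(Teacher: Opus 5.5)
Your proposal is correct and follows the paper's route. The paper gives no separate proof and obtains the statement from Theorem~\ref{170301} by the symmetry $x_2\mapsto -x_2$, $f_+\leftrightarrow f_-$: under it $D_+$ and $D_-$ are interchanged (so $\tilde D_+(u,\eps)=D_-(u,-\eps)$), field~(5.3) changes only by an overall sign, and conditions \eqref{170306}--\eqref{170308} become exactly \eqref{170302}--\eqref{170304}. Your fallback direct argument is that same proof written in reflected coordinates (in the original ones the spine leaves $(u,-\eps)$ with negative slope in $(-1,0)$, which you anticipated with ``absolute value''), and your identity for $D_+(u,-\eps)$ is correct.
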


\begin{Th}
\label{170309}
To be able to construct a right herringbone with the tip at the point $(u-\eps,\,\eps)$ in the strip $\Oe,$ it is sufficient that the following conditions be satisfied\textup:
\begin{enumerate}
\item
\label{170310}
$D_+(u,-\eps)=0;$
\item
\label{170311}
$f''_+(u-\eps)-f''_-(u+\eps)+2\eps f'''_+(u-\eps)>0;$
\item
\label{170312}
$f'''_+(u-\eps)<0.$
\end{enumerate}
\end{Th}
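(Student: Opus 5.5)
The plan is to mirror the proof of Theorem~\ref{170301}, transported to the right herringbone with its tip on the upper boundary. Right herringbones are governed by the field symmetric to~(5.3), obtained under the reflection $x_1\mapsto -x_1$. I would shift by $-\eps$ instead of $+\eps$, so that the spine through $(u-\eps,\eps)$ corresponds to an integral curve $\ell_\eps$ entering $\Oe$ through the point $(u,\eps)$. Rather than redoing the eigenvalue computation, I would apply the change of variables $x_1\mapsto-x_1$, $f_\pm(t)\mapsto f_\pm(-t)$ together with the relabelling $u\mapsto -u$.

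Under this reflection second derivatives are unchanged and third derivatives change sign. Then condition~\eqref{170310} becomes~\eqref{170302}, condition~\eqref{170311} becomes~\eqref{170303}, and condition~\eqref{170312}, with its sign reversed, becomes~\eqref{170304}. Concretely, I would first check that condition~\eqref{170310} makes the relevant point a stationary point of the mirrored field. Its Jacobi matrix then has the form~\eqref{150301} with
\[
s=1-\frac{2\eps f'''_+(u-\eps)}{f''_+(u-\eps)-f''_-(u+\eps)+2\eps f'''_+(u-\eps)}.
\]
This quantity is well defined by~\eqref{170311}, and $s>1$ by~\eqref{170312}. Consequently the eigen-direction~\eqref{150304} has slope in $(0,1)$ and is strictly steeper than the mirrored curve $D_+=0$, whose slope is~\eqref{150305}. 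Hence $\ell_\eps$ leaves $(u,\eps)$ on the side of $D_+=0$ where, by the sign of $\partial D_+/\partial x_1$ (which is~\eqref{170311}), we have $D_+>0$.

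Next I would verify $D_->0$ at the base point by the same identity used before:
\[
D_- = D_+ + \big[f''_+(u-\eps)-f''_-(u+\eps)\big]
=\big[f''_+(u-\eps)-f''_-(u+\eps)+2\eps f'''_+(u-\eps)\big]-2\eps f'''_+(u-\eps).
\]
Here $D_+=0$, the bracket is positive by~\eqref{170311}, and $-2\eps f'''_+(u-\eps)>0$ by~\eqref{170312}. Continuity then extends $D_->0$ to a neighbourhood of the point, so the curve $\ell_\eps$ passes through the region where both $D_\pm>0$, and it therefore gives the required spine.

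The main obstacle is bookkeeping rather than mathematics. I have to make sure the reflection really maps $D_\pm$ at $(u,-\eps)$ for the right herringbone to the quantities $D_\pm(u,\eps)$ used for the left one, with the stated arguments $u-\eps$ and $u+\eps$ and the stated signs; in particular, that condition~\eqref{170310} is literally the reflected image of~\eqref{170302}. I would check this first, directly from the definitions of $D_\pm$ and from the formulation of Theorem~\ref{160312}, since that theorem is the mirror necessity statement and fixes the conventions.
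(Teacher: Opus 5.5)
Your proposal is correct and follows the paper's own route. The paper gives no separate proof and treats Theorem~\ref{170309} as the mirror image of Theorem~\ref{170301}. Your substitution $g_\pm(t)=f_\pm(-t)$, $u\mapsto-u$ does carry \eqref{170302}--\eqref{170304} exactly onto \eqref{170310}--\eqref{170312}; indeed $D^g_\pm(x)=D_\pm(-x)$, which explains why the point $(u,-\eps)$ appears. Your direct check that $D_-(u,-\eps)>0$ is also right.

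The only cosmetic caveat concerns the slope: the range $(0,1)$ describes the reflected left-herringbone picture, whereas in your mirrored coordinates $\ell_\eps$ leaves $(u,\eps)$ with slope in $(-1,0)$.
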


\begin{Th}
\label{170313}
To be able to construct a right herringbone with the tip at the point $(u-\eps,\,-\eps)$ in the strip $\Oe,$ it is sufficient that the following conditions be satisfied\textup:
\begin{enumerate}
\item
\label{170314}
$D_-(u,\eps)=0;$
\item
\label{170315}
$f''_+(u+\eps)-f''_-(u-\eps)-2\eps f'''_-(u-\eps)<0;$
\item
\label{170316}
$f'''_-(u-\eps)<0.$
\end{enumerate}
\end{Th}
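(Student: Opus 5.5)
We deduce Theorem~\ref{170313} from Theorem~\ref{170301} by symmetry rather than redoing the phase-plane analysis. Two symmetries of the problem are relevant. The reflection $x_2\mapsto -x_2$ interchanges the boundary functions $f_+$ and $f_-$. The reflection $x_1\mapsto -x_1$ interchanges left and right herringbones and changes the sign of odd derivatives. Here we need the composite passage from a left herringbone with tip on the upper boundary to a right herringbone with tip on the lower boundary, so we apply both reflections at once.

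Concretely, set $\tilde f_\pm(t)=f_\mp(-t)$. Then $\tilde f_\pm''(t)=f_\mp''(-t)$ and $\tilde f_\pm'''(t)=-f_\mp'''(-t)$. The first step is to check how $D_\pm$ transform. From the definition,
$$2x_2D_+=f'_+(x_1+x_2)-f'_-(x_1-x_2)-2x_2f''_+(x_1+x_2),$$
as in~\eqref{150306}. A short computation should show that $\tilde D_+(\tilde u,\eps)$ for the data $\tilde f$ coincides, up to a nonzero factor, with $D_-(u,\eps)$ for $f$, where $\tilde u=-u$. The tip $(\tilde u+\eps,\eps)$ of a left herringbone for $\tilde f$ then corresponds to the point $(u-\eps,-\eps)$ for $f$. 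I would verify this bookkeeping first, because the sign of $x_2$ inside $D_\pm$ is easy to get wrong.

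Next, substitute into the hypotheses of Theorem~\ref{170301}. Condition~\eqref{170302} becomes~\eqref{170314}. The expression
$$\tilde f''_+(\tilde u+\eps)-\tilde f''_-(\tilde u-\eps)-2\eps\tilde f'''_+(\tilde u+\eps)$$
becomes
$$f''_-(u-\eps)-f''_+(u+\eps)+2\eps f'''_-(u-\eps),$$
so its positivity is exactly~\eqref{170315}. The inequality $\tilde f'''_+(\tilde u+\eps)=-f'''_-(u-\eps)>0$ is~\eqref{170316}. Theorem~\ref{170301} then yields a left herringbone for $\tilde f$, and reflecting back gives the required right herringbone for $f$. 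We also need that the admissibility conditions $D_\pm\ge0$ and the monotonicity of the spine's slope are preserved under the reflection, i.e. that $\tilde D_\pm$ correspond to $D_\mp$ with positive factors; this should follow from the same computation.

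The main obstacle is justifying that the full problem, including field~(5.3) and the notion of herringbone, is invariant under this reflection. If that is not transparent, the fallback is to repeat the proof of Theorem~\ref{170301} directly. In that version we use the field whose integral curves give spines of right herringbones, and find its stationary point on $D_-=0$. The Jacobian is the analogue of~\eqref{150301} with $\kappa_-$ built from $f'''_-(u-\eps)$, and we compare the eigenvector slope with the slope of $D_-=0$. The identity
$$D_+=D_-+\big[f''_-(u-\eps)-f''_+(u+\eps)\big]$$
combined with~\eqref{170315} and~\eqref{170316} gives $D_+>0$ near the point, exactly as in the displayed computation in the proof of Theorem~\ref{170301}.
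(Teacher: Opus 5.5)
Your proposal is correct and is essentially the paper's route: the paper states this theorem without proof as the symmetric counterpart of Theorem~\ref{170301}, and your point reflection $\tilde f_\pm(t)=f_\mp(-t)$ is exactly that symmetry. Your bookkeeping is right, and in fact $\tilde D_\pm(x_1,x_2)=D_\mp(-x_1,x_2)$ holds identically with factor $1$, so the three hypotheses and the sign conditions $D_\pm>0$ along the spine transfer directly.
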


\section{Major pockets}
\label{141002}

Our immediate task is to describe the evolution of two unidirectional fissures. For definiteness, we shall consider left fissures and prove the following theorem.

\begin{Th}
\label{160201}
Suppose that there exists a value $\eps_0$ of the parameter $\eps$ such that\textup:
\begin{itemize}
\item
in some neighborhood of $\eps_0$ there exists a function 
$u=u_+(\eps)$ satisfying the conditions of 
Theorem~\textup{\ref{170301}} and\textup, for 
$\eps\le\eps_0$\textup, generating an \textup{SW}-fissure 
on the domain $\Om{SW}(v_-,u_+)$\textup;
\item
in some neighborhood of $\eps_0$ there exists a function 
$u=u_-(\eps)$ satisfying the conditions of 
Theorem~\textup{\ref{170305}}\textup, generating an \textup{NW}-fissure 
on the domain $\Om{NW}(v_+,u_-)$\textup;
\item
for $\eps<\eps_0$ the first herringbone lies to the right of the second \textup{\bf(}i.\,e.\textup, 
$u_-(\eps)<v_-(\eps)$\textup{\bf)}\textup, between them 
lies a domain with a simple left foliation 
$\Om{L}(u_-,v_-),$ and the value $\eps_0$ is determined by the equation $v_-(\eps_0)=u_-(\eps_0).$
\end{itemize}
Then\textup, in some right neighborhood of $\eps_0$\textup, 
the two described herringbones ``glue'' together in the following 
sense\textup: a new left herringbone appears\textup, whose tip is located at the point 
$(u_+(\eps)+\eps,\eps)$\textup, and as $\eps\searrow\eps_0$ the spine function $T(u)$ of this new herringbone 
tends to the spine function of the first herringbone for $u\ge u_-(\eps_0)+\eps_0,$ and to the spine function of 
the second herringbone for $u\le u_-(\eps_0)+\eps_0$.
\end{Th}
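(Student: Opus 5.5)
The plan is to realize the new herringbone as an integral curve of field~(5.3). By Theorem~\ref{170301}, for every $\eps$ near $\eps_0$ the point $(u_+(\eps),\eps)$ is a stationary point of this field with real eigenvalues. The relevant eigenvector has slope~\eqref{150304}, which lies strictly inside $(0,1)$ and is strictly greater than the slope~\eqref{150305} of the curve $D_+=0$. Hence there is a unique integral curve $\ell_\eps$ of field~(5.3) that enters $\Oe$ through $(u_+(\eps),\eps)$ with this slope. Its shift by $\eps$ to the right is the candidate spine $T_\eps$ of the new left herringbone.

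For $\eps\le\eps_0$, $\ell_\eps$ is exactly the spine of the SW-herringbone on $\Om{SW}(v_-,u_+)$. That curve terminates on the lower boundary at the point determined by $v_-(\eps)$. Symmetrically, the NW-herringbone is generated by the integral curve through $(u_-(\eps),-\eps)$ given by Theorem~\ref{170305}.

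The key observation concerns $\eps=\eps_0$. There the condition $v_-(\eps_0)=u_-(\eps_0)$ means that the integral curve $\ell_{\eps_0}$ hits the lower boundary precisely at the stationary point $(u_-(\eps_0),-\eps_0)$ that serves as the tip of the NW-herringbone. So the SW-spine ends at the NW-tip, and the simple left foliation $\Om{L}(u_-,v_-)$ between them degenerates to nothing.

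I will then use continuous dependence of integral curves on the parameter $\eps$ and on initial data, away from stationary points, together with the local structure of the flow near the stationary point $(u_-(\eps),-\eps)$. That point has real eigenvalues of opposite sign, which follows from the eigenvalue formula~\eqref{150303} with $s>1$, so it is a saddle. For $\eps$ slightly larger than $\eps_0$, the curve $\ell_\eps$ no longer reaches the lower boundary to the right of $u_-(\eps)$. Instead it passes near the saddle, is deflected along its outgoing separatrix, and continues to the left, close to the spine of the NW-herringbone.

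The sign of this deflection must come from the monotonicity that, for $\eps<\eps_0$, gives $u_-(\eps)<v_-(\eps)$. Transversality then shows that crossing $\eps_0$ pushes $\ell_\eps$ to the other side of the stable separatrix. The standard Hartman--Grobman (or simply $C^1$ linearization) picture of a saddle then gives the convergence claimed in the statement. As $\eps\searrow\eps_0$, the part of $T_\eps$ with $u\ge u_-(\eps_0)+\eps_0$ converges to the first spine. The part with $u\le u_-(\eps_0)+\eps_0$ converges to the second spine, namely the separatrix continued to the left.

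It then remains to check the herringbone admissibility conditions along $\ell_\eps$: that $D_+>0$ and $D_->0$ on the relevant side. Near the new tip these follow exactly as in the proof of Theorem~\ref{170301}. Near the old spines they follow by continuity, since both original herringbones satisfy them with strict inequalities on compact pieces. Near the saddle I rely on the strict inequalities of Theorem~\ref{170305} in a neighbourhood of that point.

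The main obstacle is the passage near the saddle. I must show that for $\eps>\eps_0$ the curve $\ell_\eps$ is deflected to the correct side, continuing leftwards rather than hitting the boundary. Equivalently, I must check the sign of $\frac{d}{d\eps}\big(v_-(\eps)-u_-(\eps)\big)$ at $\eps_0$, or at least that this difference changes sign. I would try first to express the splitting distance as a Melnikov-type derivative. If that is too heavy, I would simply assume transversal crossing, which is implicit in ``the value $\eps_0$ is determined by the equation'', and derive the side from the orientation of the field near the saddle.
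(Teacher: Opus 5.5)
\textbf{Where the argument breaks.} Your global picture matches the paper's. The curve $\ell_{\eps_0}$ lands on the saddle $(u_-(\eps_0),-\eps_0)$, and for $\eps>\eps_0$ the curve $\ell_\eps$ should slip past that saddle and follow the NW-spine. But you leave open the step that carries the whole theorem: on which side of the incoming separatrix $\ell_\eps$ lies for $\eps>\eps_0$. You fall back on an uncomputed Melnikov integral, or on assuming a transversal crossing. Transversality is not among the hypotheses, and the facts you cite cannot decide the side:
\begin{itemize}
\item $v_->u_-$ for $\eps<\eps_0$ with equality at $\eps_0$ is also consistent with a tangential touch, after which $\ell_\eps$ again lands on the lower boundary to the right of $u_-(\eps)$.
\item The orientation of the field at the saddle says nothing here, because the question is how the curves depend on $\eps$.
\end{itemize}
Your reformulation through $\frac{d}{d\eps}\big(v_-(\eps)-u_-(\eps)\big)$ is not workable either. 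In the desired regime $v_-$ does not exist for $\eps>\eps_0$. Moreover, since the saddle sits on the boundary, the landing point depends on the splitting distance roughly like a power $\nu\in(0,1)$ fixed by the eigenvalue ratio. So even under transversality, $v_--u_-$ behaves like $(\eps_0-\eps)^{\nu}$ and has no finite derivative at $\eps_0$ whose sign you could compute.

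\textbf{What the paper uses instead.} The missing ingredient is the monotone dependence of the family $\ell_\eps$ on $\eps$; the paper uses it in place of any transversality condition. The chain is:
\begin{itemize}
\item By Proposition~8.8, in the upper half-strip the curves $\ell_\eps$ are strictly ordered: the larger $\eps$, the lower the curve.
\item All of them cross the midline at the same $\eps$-independent stationary point $(u_{02},0)$.
\item By Remark~8.12 the order is reversed below the midline.
\end{itemize}
Hence for every $\eps>\eps_0$ the lower part of $\ell_\eps$ lies strictly above $\ell_{\eps_0}$, which ends at the saddle $(u_-(\eps_0),-\eps_0)$. So $\ell_\eps$ passes above that point instead of reaching the lower boundary. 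The NW-spine $\ell_\eps^-$ then separates $\ell_\eps$ from the lower boundary for $x_1\in(u_{01},u_-(\eps_0))$. Both curves meet at the node $(u_{01},0)$, beyond which $\ell_\eps^-$ lies above $\ell_\eps$.

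\textbf{A related gap in your convergence step.} The node $(u_{01},0)$ lies on the second spine itself. So continuous dependence away from stationary points, plus linearization at the saddle, does not by itself give convergence of the part of $T$ beyond the midline. You also need the local structure of that node and the ordering of $\ell_\eps$ and $\ell_\eps^-$ there.
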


\begin{proof} 
First, we note that the possibility of constructing the fissures mentioned in the statement implies the existence of points with coordinates $u_{01}$
and $u_{02}$ on the axis $x_2=0$ at which the integral
lines of our field intersect this axis. 
See Figure~\ref{190801}. (The fissures will intersect the 
axis $x_2=0$ at the points 
$u_{01}+\eps$ and $u_{02}+\eps$, respectively.)

\begin{figure}[h]
    \centering
    \includegraphics[scale = 0.25]{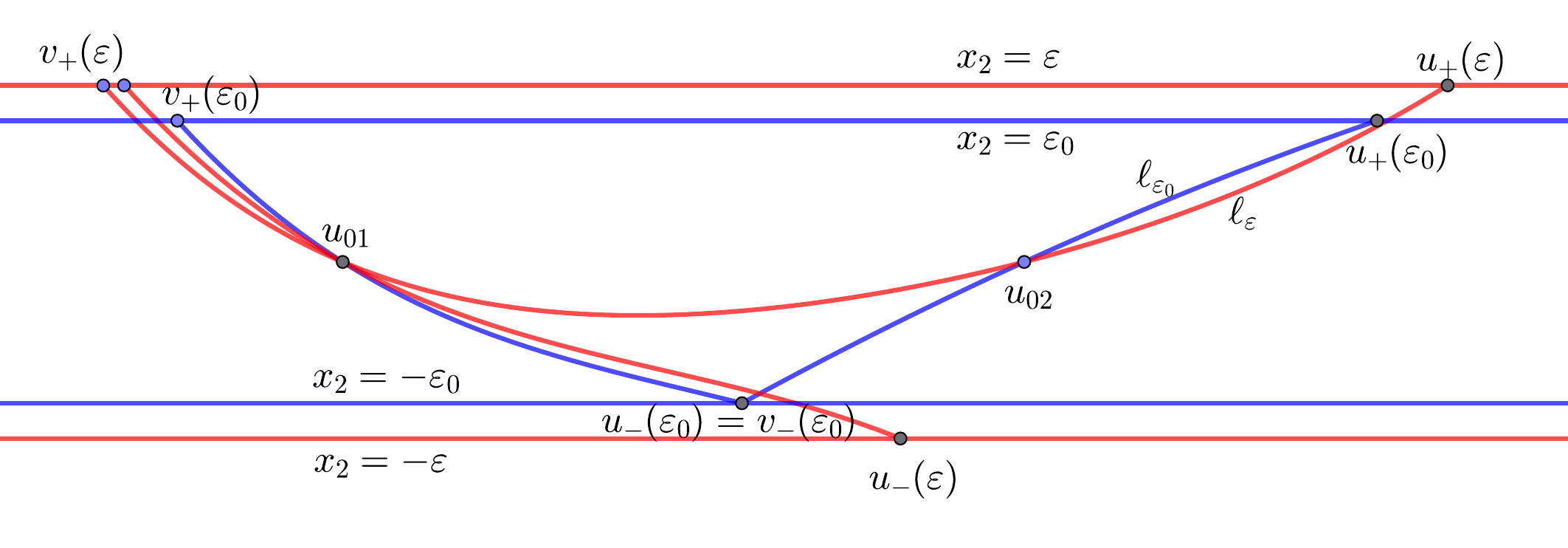}
    \caption{Integral curves for $\eps_0$ and 
    for $\eps>\eps_0$.}
    \label{190801}
\end{figure}

Thus, for $\eps\le\eps_0$ our local foliation
has the following form:\footnote{Recall that
in Section~11 we introduced for domains foliated by 
horizontal herringbones the notation $\Om{HB} 
(A,C)$ if the spine of the herringbone starts at the point $A$ 
and ends at the point $C$. In the present case, $\Om{SW}(v_-,u_+)=\Om{HB}\big((v_-+\eps,\eps),(u_++\eps,\eps)\big)$ and 
$\Om{NW}(v_+,u_-)=\Om{HB}\big((v_++\eps,\eps),(u_-+\eps,\eps)\big)$.}

\eq{160701}{
\begin{gathered}
 \Om{R}(\ast,v_+)\cup\Om{HB}\big((v_++\eps,\eps),(u_-+\eps,\eps)\big)\cup\Om{L}(u_-,v_-)\cup
 \\
 \cup\Om{HB}\big((v_-+\eps,\eps),(u_++\eps,\eps)\big)\cup\Om{R}(u_+,\ast)\,. 
\end{gathered}
}

For $\eps=\eps_0$ the domain $\Om{L}(u_-,v_-)$ 
collapses to a segment, and such a foliation
can no longer exist. The assertion of the theorem being proved 
is that for $\eps>\eps_0$ 
the local foliation takes the form
\eq{160702}{
\Om{R}(\ast,v_+)\cup\Om{HB}\big((v_++\eps,\eps),(u_++\eps,\eps)\big)\cup\Om{R}(u_+,\ast)\,.
}

Note that, by the hypotheses of the theorem, for $\eps>\eps_0$
we could continue to construct a fissure from the point
$(u_-+\eps,-\eps)$, but we would not be able to connect it in any way
with the foliation coming from the right.
The herringbone whose tip is located at the point
$(u_++\eps,\eps)$ for $\eps>\eps_0$ will no
longer generate a fissure. It will be, so to speak, the 
``heir'' of both fissures.

Let us examine in more detail the integral curve 
$\ell_\eps$ that enters the strip $\Oe$ with 
a positive slope at the saddle point 
$(u_+(\eps),\eps)$. If $\eps<\eps_0$, then by 
Theorem~\ref{170301} the curve $\ell_\eps$ generates 
a herringbone which, by the hypotheses of the theorem, is 
a fissure. Now let us increase $\eps$ slightly.
Then, according to Proposition~8.8, as $\eps$ grows the curves will 
lie below the curve 
$\ell_{\eps_0}$ in the upper half of the strip,
descending toward the axis $x_2=0$ with increasing $\eps$. 
For all $\eps$ the curves $\ell_\eps$ intersect 
the midline of the strip at the point $u_{02}$. After 
crossing the midline, the curves $\ell_\eps$ are 
ordered in the opposite way: the larger $\eps$, the higher the curve 
lies (see Remark~8.12 and the illustration in 
Figure~\ref{190801}). 
Thus, for $\eps>\eps_0$ the curve 
$\ell_\eps$ passes above the point 
$(u_-(\eps_0),-\eps_0)$. Moreover, if we 
consider the curve $\ell_\eps^-$ issuing from 
the saddle point $(u_-(\eps),-\eps)$ (this is the curve 
that would generate a fissure, but we now do not construct it), 
then we see that it separates the curve 
$\ell_\eps$ from the lower boundary of the strip on the interval
$x_1\in(u_{01},u_-(\eps_0))$. At the node $(u_{01},0)$
these curves intersect, and in the upper half of 
the strip the curve $\ell_\eps^-$ lies already 
above the curve $\ell_\eps$ (see Fig.~\ref{190801}).

The fact that the curve $\ell_\eps$ is close to 
the curves $\ell_{\eps_0}$ and $\ell^-_{\eps_0}$ when $\eps$ is close to $\eps_0$ follows from 
the sufficient smoothness of the boundary values.
\end{proof}

\section{Examples. Third-degree polynomials: 
merger of two fissures}
\label{141003}

We shall apply the theorem just proved to the situation 
described in Fig.~14 (see~\cite{First}). Recall that there 
we considered the case of third-degree polynomials $f_\pm$ 
with $a^\pm_3>0$, and the discriminant of the quadratic polynomial 
$f'_+-f'_-$ is positive, i.\,e.,
\eq{100704}{
d^2\df(a_2^+-a_2^-)^2-3(a_3^+-a_3^-)(a_1^+-a_1^-)>0\,.
}
Therefore, the equation $f'_+(u)=f'_-(u)$ has two distinct real roots $u_{01}$ and $u_{02}$. 
We then found that for small $\eps$ the foliation has the form
\eq{100701}{
\begin{aligned}
\Oe=\Om{R}(-\infty,v_{+1})\cup\Om{NW}(v_{+1},u_{-1})\cup\Om{L}(u_{-1},v_{-2})\cup
\\
\cup\Om{SW}(v_{-2},u_{+2})\cup\Om{R}(u_{+2},+\infty)\,,
\end{aligned}
}
and the domain $\Om{L}(u_{-1},v_{-2})$ shrinks as $\eps$ grows.

Recall that $u_{+2}$ is the larger root of the equation $D_+(u,\eps)=0$, i.\,e.,
\eq{141101}{
u_{+2}=-\eps+\frac{-(a_2^+-a_2^-)+\sqrt{d^2+36(a_3^+-a_3^-)a_3^+\eps^2}}{3(a_3^+-a_3^-)}\,,
}
and $u_{-1}$ is the smaller root of the equation $D_-(u,-\eps)=0$, i.\,e.,
\eq{100702}{
u_{-1}=\eps+\frac{-(a_2^+-a_2^-)-\sqrt{d^2-36(a_3^+-a_3^-)a_3^-\eps^2}}{3(a_3^+-a_3^-)}\,.
}
The values $u_{+2}$ and $u_{-1}$ satisfy the conditions of Theorem~\ref{170301} and Theorem~\ref{170305}, respectively. According to these theorems, we can construct left herringbones with tips at the points $(u_{+2}+\eps,\eps)$ and $(u_{-1}+\eps,-\eps)$. For small $\eps$, these herringbones extend to the opposite boundary, and their bases are denoted by $(v_{-2}+\eps,-\eps)$ and $(v_{+1}+\eps,\eps)$. To find the values $v_{-2}$ and $v_{+1}$, one needs to solve the differential equation~(4.32) with boundary conditions at $u=u_{+2}+\eps$ and $u=u_{-1}+\eps$, respectively. We cannot solve this equation explicitly, so we must resort to indirect considerations.

We are not interested in the value $v_{+1}$; it suffices for us to know the simple fact that to the left of the point $(v_{+1}+\eps,\eps)$ we can construct a right simple foliation for all $\eps$. 
Much more information is needed about $v_{-2}$. For constructing the herringbone, it is important that the corresponding integral curve $\ell_\eps$ passes through the domain where $D_+\ge0$ and $D_-\ge0$. Thus, the value $v_{-2}(\eps)$ must lie between the roots of the quadratic polynomials $D_+(u,-\eps)$ and $D_-(u,-\eps)$. While the first condition will cause us no difficulties, the second simply cannot hold for all $\eps$. The curve $D_-=0$ is an ellipse, and the interval $(u_{-1},u_{-2})$ between the roots of this polynomial shrinks to a point at
\eq{201203}{
\eps_{\max}=\frac{d}{6\sqrt{a_3^-(a_3^+-a_3^-)}}\,.
}
Thus, the value $v_{-2}(\eps)$, which is a continuous function of $\eps$, must for some $\eps$ ($\eps\le\eps_{\max}$) coincide with either the left or the right endpoint of the interval $(u_{-1},u_{-2})$. Our task is to show that for all admissible values of the coefficients of the boundary polynomials this will be the left endpoint, i.\,e., we shall prove that there exists $\eps_0$ ($\eps_0<\eps_{\max}$) solving the equation $v_{-2}(\eps)=u_{-1}(\eps)$, and for all $\eps<\eps_0$ the strict inequality $u_{-1}<v_{-2}<u_{-2}$ holds. After that, we will be able to apply Theorem~\ref{160201} and conclude that a major pocket is formed from the two fissures, and for $\eps>\eps_0$ the foliation takes the form
\eq{141102}{
\Oe=\Om{R}(-\infty,v_{+1})\cup\Om{HB}\big((v_{+1}+\eps,\eps),(u_{+2}+\eps,\eps)\big)\cup\Om{R}(u_{+2},+\infty)\,.
}

Until now, we have only needed the estimate $v_{-2}<u_{02}-\eps$, which followed from the fact that the positive slope of the spine (the curve $\ell_\eps$) is not grater than one. We now show that this will suffice only under the condition
\eq{191101}{a_3^+\ge2a_3^-.}

Indeed, if we try to verify that for $\eps\le\eps_{\max}$ the condition 
$u_{02}-\eps\le u_{-2}(\eps)$ holds \textup{\bf(}which guarantees 
the inequality $v_{-2}(\eps)<u_{-1}(\eps)$\textup{\bf)},
then we need the inequality
$$
d-6(a_3^+-a_3^-)\eps\le\sqrt{d^2-36(a_3^+-a_3^-)a_3^-\eps^2}
$$
to hold for all $\eps\le\eps_{\max}$. In particular, for $\eps=\eps_{\max}$
we obtain
$$
d\le6(a_3^+-a_3^-)\eps_{\max}=d\sqrt{\frac{a_3^+-a_3^-}{a_3^-}},
$$
which is equivalent to condition~\eqref{191101}. For the remaining values of the coefficients (i.\,e., when $a_3^-\le a_3^+<2a_3^-$) we will have to find other arguments.

We shall examine in more detail the behavior of the integral curves of our field in order to verify that the point $(v_{-2}(\eps),-\eps)$ can never lie to the right of the point $(u_{-2}(\eps),-\eps)$.

In the lower half-plane we may have three stationary points — these are the intersection points of the curve $D_-=0$ with the boundary $x_2=-\eps$ and the point $x_\ast$, independent of $\eps$, where the curves $D_+=0$ and $D_-=0$ intersect. For the values of the coefficients of the boundary polynomials under consideration ($0<a_3^-\le a_3^+$), the curve $D_+=0$ consists of two branches of a hyperbola (together with the boundary $x_2=-\eps$ they form the isocline $X_1$, at whose points the integral curves have slope $1$), while the curve $D_-=0$ is an ellipse (together with the boundary $x_2=\eps$ it forms the isocline $X_{-1}$, at whose points the integral curves have slope $-1$). Their relative position is shown in Figure~\ref{201101}.
\begin{figure}[h]
    \centering
    \includegraphics[scale = 0.4]{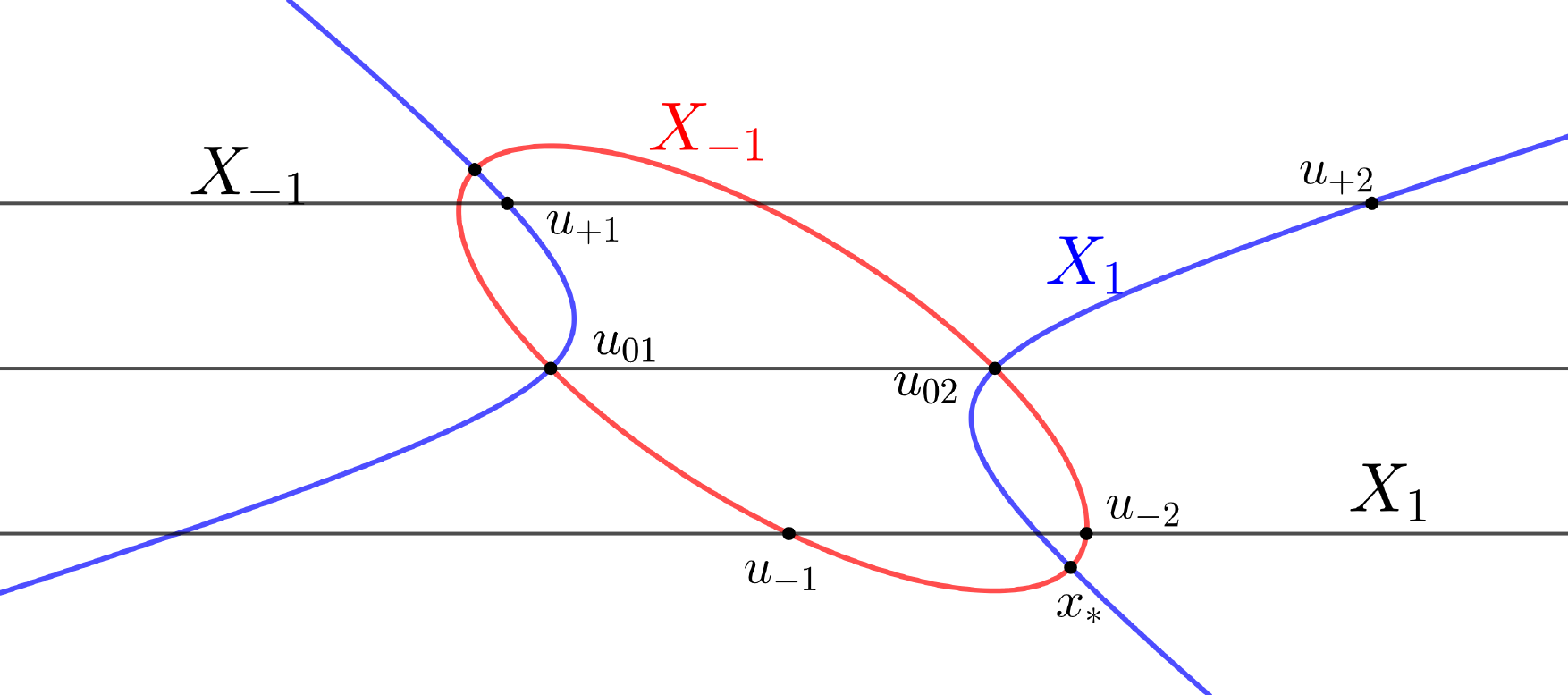}
    \caption{The curves $D_+=0$ and $D_-=0$.}
    \label{201101}
\end{figure}
In the lower half-plane, the solution of the system $D_+=D_-=0$ has the form
\eq{201102}{
x_\ast=(x_{\ast1},x_{\ast2})=
\Big(u_0+\frac{a_3^++a_3^-}{a_3^+-a_3^-}\,\eps_\ast,
\;-\eps_\ast\Big),
}
where
\eq{080201}{
\eps_\ast\df\frac{d}{6\sqrt{a_3^+a_3^-}}\,.
}
Recall that
\eq{201103}{
u_0=-\frac{a_2^+-a_2^-}{3(a_3^+-a_3^-)}\,.
}

For $\eps\le\eps_\ast$ we have no problems, since, as we know, the curve $\ell_\eps$ can intersect the curve $D_+=0$ only at stationary points (and here this is moreover obvious: in the lower half of the strip the curves $\ell_\eps$ and $D_+=0$ diverge). Therefore, the right branch of the hyperbola $D_+=0$ separates the curve $\ell_\eps$ in the lower half of the strip from the right part of the boundary of the ellipse $D_-=0$, i.e., $\ell_\eps$ cannot leave the strip through the point $(u_{-2},-\eps)$. Consequently, we need to understand the behavior of the integral curves of the field from~(5.3) under investigation for $\eps\in(\eps_\ast,\eps_{\max})$.

The Jacobi matrix for our field was written out in general form in \cite{First} \textup{\bf(}formula (5.5)\textup{\bf)}. Moreover, the form of the Jacobi matrix at the points $u_-$, where the curve $D_-=0$ intersects the boundary $x_2=-\eps$, was also given there:
\eq{211101}{
J(u_-,-\eps)=\eps\big(f_+''(u_--\eps)-f_-''(u_-+\eps)+2\eps f_-'''(u_-+\eps)\big)
\begin{pmatrix}
1&2+3\eps\kappa_-
\\
1&\eps\kappa_-
\end{pmatrix},
}
where
\eq{211102}{
\kappa_-=\frac{2f_-'''(u_-+\eps)}{f_-''(u_-+\eps)-f_+''(u_--\eps)-2\eps f_-'''(u_-+\eps)}\,.
}
In our case of third-degree polynomials, we obtain the following expression:
\eq{221101}{
\kappa_-=-\frac{2a_3^-}{(a_3^+-a_3^-)(u_--\eps-u_0)}=
\pm\frac{6a_3^-}{\sqrt{d^2-36(a_3^+-a_3^-)a_3^-\eps^2}}\,.
}
The plus sign in the last formula corresponds to $\kappa_{-1}$, i.\,e., to the root $u_{-1}$. Accordingly, the minus sign corresponds to $\kappa_{-2}$, i.\,e., to the root $u_{-2}$.

If we introduce the notation $s=1+\eps\kappa_-$, then the matrix in formula~\eqref{211101} takes the form
\eq{221102}{
\begin{pmatrix}
1&3s-1
\\
1&s-1
\end{pmatrix},
}
The characteristic polynomial of this matrix is $\lambda^2-s\lambda-2s$,
the eigenvalues are $\half(s\pm\sqrt{s^2+8s})$, and the eigenvectors of this matrix have the form
\eq{101201}{
\begin{pmatrix}
\lambda-s+1\\1
\end{pmatrix}. 
}
Since $\kappa_{-1}>0$, the matrix~\eqref{221102} has two real eigenvalues of opposite signs for all $\eps<\eps_{\max}$. Consequently, the left intersection point of the curve $D_-=0$ with the boundary $x_2=-\eps$, i.\,e., the point $(u_{-1},-\eps)$, is always a saddle point. It satisfies Theorem~\ref{170305}, and for all $\eps<\eps_{\max}$ we shall construct a fissure, namely, a left horizontal herringbone whose tip is located at the point $(u_{-1}+\eps,-\eps)$.

The nature of the stationary point $(u_{-2},-\eps)$ changes as $\eps$ increases. If
\eq{080202}{
\eps_{\ast}<\eps<\eps_1\df\frac{3d}{2\sqrt{(81a_3^+-80a_3^-)a_3^-}}\,,
}
then $-8<s<0$, and the characteristic polynomial of the Jacobi matrix has complex conjugate roots. Thus the point $(u_{-2},-\eps)$ is a focus. What matters to us is that no integral curve can leave the strip through this point.

If
\eq{080203}{
\eps_1<\eps<\eps_{\max}\,,
}
then $s<-8$, and the Jacobi matrix has two distinct negative eigenvalues. Hence the point $(u_{-2},-\eps)$ is a nondegenerate node.

Let us now examine the Jacobi matrix at the point $x_{\ast}$:
\eq{281101}{
J(x_\ast)=-6\eps_{\ast}
\begin{pmatrix}
-(\eps+\eps_{\ast})a_3^--(\eps-\eps_{\ast})a_3^+&(\eps+\eps_{\ast})a_3^--(\eps-\eps_{\ast})a_3^+
\\
-(\eps+\eps_{\ast})a_3^-+(\eps-\eps_{\ast})a_3^+&(\eps+\eps_{\ast})a_3^-+(\eps-\eps_{\ast})a_3^+
\end{pmatrix}.
}
The eigenvalues of this matrix are
\eq{281102}{
\lambda=\pm12\eps_{\ast}\sqrt{(\eps^2-\eps_{\ast}^2)a_3^+a_3^-}.
}
For $\eps>\eps_{\ast}$, we have two real eigenvalues of opposite signs. Hence $x_\ast$ is a saddle point. The eigenvectors of this matrix can be written as
\eq{131201}{
\begin{pmatrix}
\sqrt{(\eps+\eps_{\ast})a_3^-}+\sqrt{(\eps-\eps_{\ast})a_3^+}
\\
\sqrt{(\eps+\eps_{\ast})a_3^-}-\sqrt{(\eps-\eps_{\ast})a_3^+}
\end{pmatrix}
\text{ and }
\begin{pmatrix}
\sqrt{(\eps+\eps_{\ast})a_3^-}-\sqrt{(\eps-\eps_{\ast})a_3^+}
\\
\sqrt{(\eps+\eps_{\ast})a_3^-}+\sqrt{(\eps-\eps_{\ast})a_3^+}
\end{pmatrix}.
}
Since $(\eps+\eps_{\ast})a_3^- > (\eps-\eps_{\ast})a_3^+$ for $\eps<\eps_{\max}$, the slopes of these vectors are positive and complementary. The curve $D_-=0$, which has slope $1$ at $x_{\ast}$, lies in the narrow angle between these vectors when $\eps$ slightly exceeds $\eps_{\ast}$. Therefore, the point $(u_{-2},-\eps)$, being the intersection point of the ellipse $D_-=0$ with the boundary of the strip, lies between the integral curves emerging from $x_{\ast}$ along the vectors~\eqref{131201}. What matters to us is that the separatrix (call it $L$) emerging from $x_{\ast}$ along the left vector from~\eqref{131201} (the vector with the smaller slope) cannot intersect the boundary of the strip to the right of the point $(u_{-2},-\eps)$. This holds not only for $\eps$ slightly exceeding $\eps_{\ast}$ but for all $\eps\le\eps_{\max}$. To see this, we consider the isocline $X_0$, defined as the locus of points at which the integral curves of our field are horizontal, i.\,e.,
\eq{191201}{
\dot x_2=x_2(f_-'-f_+')-x_2(\eps-x_2)f_-''+x_2(\eps+x_2)f_+''=0
}
\textup{\bf(}see formula (5.3)\textup{\bf)}. The horizontal axis $x_2=0$ is not of interest to us at the moment. Rather, we are interested in the branch of the hyperbola that passes through $x_{\ast}$; more specifically, the part connecting $x_{\ast}$ and $(u_{-2},-\eps)$. The relative position of the curves $X_0$, $X_{-1}$ (points where the slope of the integral curves is $-1$, i.\,e., where $D_-=0$), and $X_\infty$ (points where the integral curves are vertical, i.\,e., $\dot x_1=0$) is shown in Figure~\ref{191202}.
\begin{figure}[h]
    \centering
    \includegraphics[scale = 0.35]{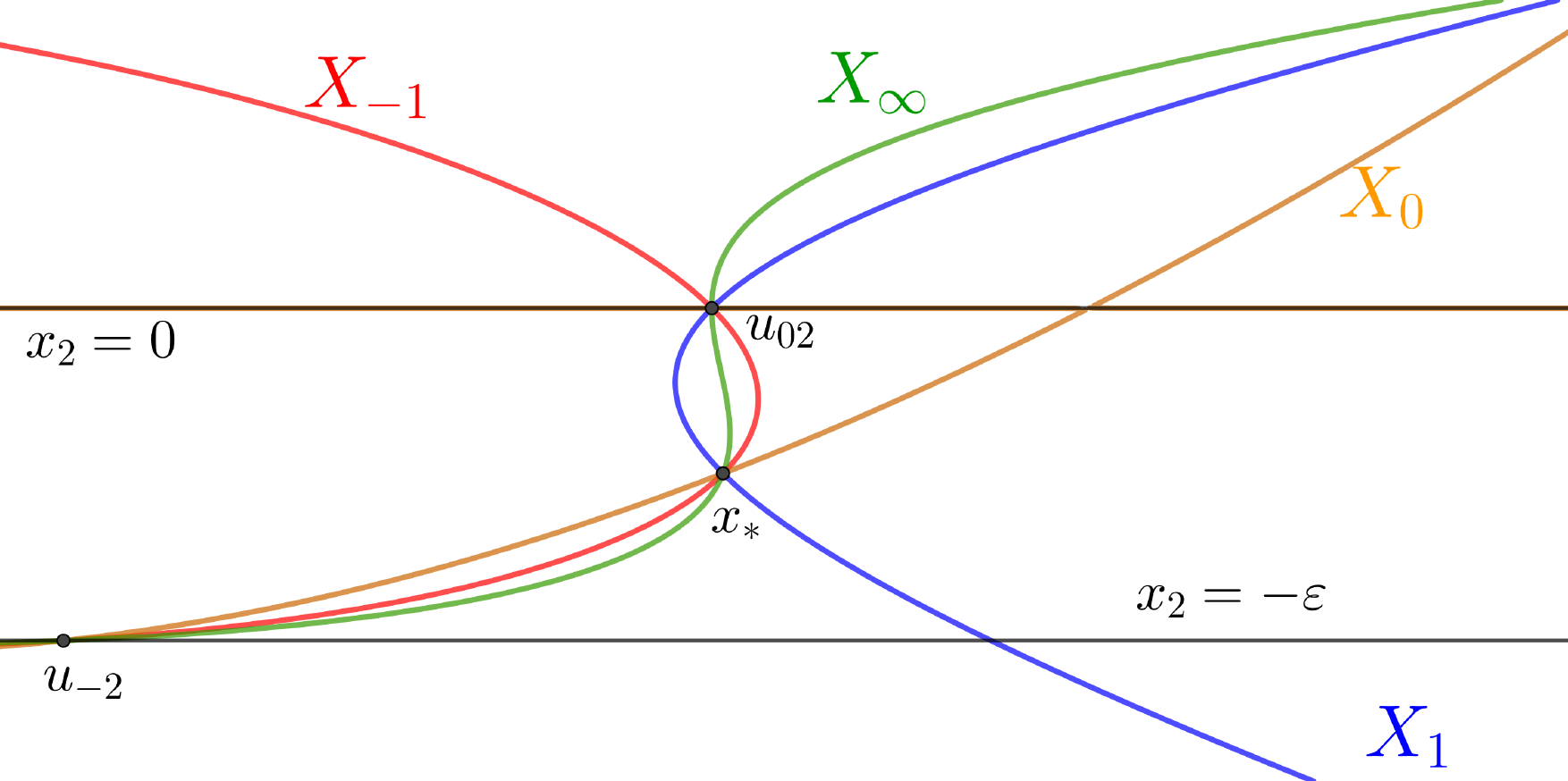}
    \caption{The isoclines $X_0$, $X_{-1}$, and $X_\infty$ between the points
    $x_{\ast}$ and $(u_{-2},-\eps)$.}
    \label{191202}
\end{figure}
Such a position of these curves could be explained qualitatively by the fact that the integral curves must turn sharply in a neighborhood of the saddle point $x_{\ast}$. However, let us verify this analytically, as well as the fact that all these curves lie between the two separatrices of our field emerging from $x_{\ast}$ along the vectors~\eqref{131201}.

Rewriting equation~\eqref{191201} in terms of the coefficients of the polynomials $f_+$ and $f_-$, we obtain
\eq{191203}{
\begin{aligned}
3(a_3^+-a_3^-)(x_2^2-x_1^2)&+6\eps[a_3^+(x_1+x_2)-a_3^-(x_1-x_2)]
\\
&+2(a_2^+-a_2^-)(\eps-x_1)-(a_1^+-a_1^-)=0\,.   
\end{aligned}
}
Differentiating, we obtain an expression for the slope of the curve $X_0$:
\eq{191204}{
\frac{x_1-\eps-u_0}{x_2+\frac{a_3^++a_3^-}{a_3^+-a_3^-}\eps}\,,
}
where $u_0$, recall, is given by formula~\eqref{201103}. Substituting the coordinates of the point $x_{\ast}$ \textup{\bf(}see~\eqref{131201}\textup{\bf)} here, we obtain the expression
\eq{191205}{
\frac{a_3^-(\eps+\eps_{\ast})-a_3^+(\eps-\eps_{\ast})}
{a_3^-(\eps+\eps_{\ast})+a_3^+(\eps-\eps_{\ast})}\,.
}

As we have already noted, this slope is positive, i.\,e.,
\eq{201201}{
a_3^-(\eps+\eps_{\ast})>a_3^+(\eps-\eps_{\ast})
}
for all $\eps<\eps_{\max}$. Indeed, by rewriting condition~\eqref{201201} in the form
\eq{201202}{
\eps<-\frac{a_3^++a_3^-}{a_3^+-a_3^-}\,x_{\ast2}\,,
}
it is easy to see that this bound is larger than $\eps_{\max}$. Indeed,
\begin{align*}
\eps_{\max}\buildrel{\eqref{201203}}\over=&
\frac{d}{6\sqrt{a_3^-(a_3^+-a_3^-)}}\buildrel{\eqref{080201}}\over=
\eps_{\ast}\;\sqrt{\frac{a_3^+}{a_3^+-a_3^-}}
\\
<&\ \eps_{\ast}\;\frac{a_3^+}{a_3^+-a_3^-}
< \eps_{\ast}\;\frac{a_3^++a_3^-}{a_3^+-a_3^-}\,.
\end{align*}

Note that the positive slope of the first vector ftom~\eqref{131201} is less than the slope of the curve $X_0$ at $x_{\ast}$:
$$
\begin{gathered}
\frac{\sqrt{(\eps+\eps_{\ast})a_3^-}-\sqrt{(\eps-\eps_{\ast})a_3^+}}
{\sqrt{(\eps+\eps_{\ast})a_3^-}+\sqrt{(\eps-\eps_{\ast})a_3^+}}=
\frac{(\eps+\eps_{\ast})a_3^--(\eps-\eps_{\ast})a_3^+}
{\Big(\sqrt{(\eps+\eps_{\ast})a_3^-}+\sqrt{(\eps-\eps_{\ast})a_3^+}\Big)^2}
\\
<\frac{(\eps+\eps_{\ast})a_3^--(\eps-\eps_{\ast})a_3^+}
{(\eps+\eps_{\ast})a_3^-+(\eps-\eps_{\ast})a_3^+}\,.
\end{gathered}
$$

Thus, the integral curve $L$ emerges at a smaller positive angle than the isocline $X_0$. Consequently, below the point $x_{\ast}$ it lies above $X_0$ (to the left of $X_0$). This relative position continues throughout the interval $x_1\in(u_{-2},x_{\ast1})$, since on this interval, where there are no singular points, the curve cannot intersect $X_0$. What matters to us is that this integral curve cannot intersect the boundary of the domain to the right of the point $(u_{-2},-\eps)$.

Now recall that we needed the same assertion for another integral curve of our field, which we denoted by $\ell_\eps$. Let us return to it. We need to verify that if this curve reaches the lower boundary of the strip (we shall assume this from now on), then its intersection point with this boundary cannot lie to the right of the point $(u_{-2},-\eps)$.

As already noted, the curve $\ell_\eps$ has two common points with the curve $X_1$ (i.\,e., $D_+=0$): the point on the upper boundary from which we start this line, and the intersection point with the midline of the strip. In the upper half of the strip it lies to the right of the hyperbola $X_1$, and in the lower half — to the left (more precisely, between the branches of the hyperbola, i.\,e., to the left of the right branch). Thus, the curve $\ell_\eps$ intersects the line $x_2=-\eps_{\ast}$ to the left of the point $x_{\ast}$. Consequently, on its entire path to the lower boundary it lies to the left of the integral curve $L$, which is what we needed.

In fact, we wanted slightly more: we wanted the strict inequality $\eps_0<\eps_{\max}$ to hold. Although I believe that the curve $\ell_\eps$ never intersects the lower boundary at the point $(u_{-2},-\eps)$ for any values of the parameters, even if this does occur, nothing prevents the construction of the claimed foliation. To apply Theorem~\ref{160201}, we need the strict inequality~\eqref{170307}, which fails only in the case $\eps_0=\eps_{\max}$, i.\,e., if $v(\eps_0)=u_{-1}(\eps_0)=u_{-2}(\eps_0)=u_0+\eps_0$.

At $\eps=\eps_{\max}$, a so-called saddle-node bifurcation occurs: the two stationary points — the saddle $(u_{-1}(\eps),-\eps)$ and the node $(u_{-2}(\eps),-\eps)$ — merge into one, $(u_0+\eps_{\max},-\eps_{\max})$, which is called a saddle-node, and then disappear.

At the saddle-node point $(u_0+\eps_{\max},-\eps_{\max})$, the Jacobi matrix equals
\eq{200301}{
J(u_0+\eps_{\max},-\eps_{\max})=-2\eps_{\max}^2f'''_-(u_0)
\begin{pmatrix}
    0&3\\0&1
\end{pmatrix}
=-12\eps_{\max}^2a_3^-
\begin{pmatrix}
    0&3\\0&1
\end{pmatrix}.
}
It has a horizontal eigenvector corresponding to the zero eigenvalue, while the eigenvector with slope $\tfrac13$ corresponds to the second eigenvalue. We are interested in the separatrix running along the horizontal eigenvector. In both directions it passes inside the strip. To the right, however, it lies above the isocline $X_0$, while to the left it lies below $X_0$ but above the isocline $X_{-1}$, i.\,e., in the domain where $D_->0$. Therefore, although we cannot apply Theorem~\ref{170305} because the strict condition~\eqref{170307} is not satisfied, we can nevertheless construct the corresponding herringbone, whose tip touches the lower boundary. After that, we can use the arguments from the proof of Theorem~\ref{160201} to construct a major pocket for $\eps>\eps_{\max}$.

Until now, we have followed the curve $\ell_\eps$ up to the moment $\eps=\eps_0$,  when the two fissures merge. 
Now, in order to guarantee the possibility of constructing the herringbone, we need to verify that after this moment the curve $\ell_\eps$ passes through the domain where $D_+>0$ and $D_->0$ for all $\eps$. 
Moreover, for smaller $\eps$ we also need to verify that the second fissure passes through this domain. Let us start with that fissure (we shall denote the corresponding integral curve by $\ell_\eps^-$). 
The same argument that showed that the curve $\ell_\eps$, starting from a point on the upper boundary where $D_+=0$, 
later in the upper half-strip, up to the stationary point $(u_{02},0)$, lies below the isocline $X_1$ ($D_+=0$), 
i.\,e., in the domain where $D_+>0$, also shows that the curve $\ell_\eps^-$, 
starting from a point on the lower boundary where $D_-=0$, later in the lower half-strip, up to the stationary point $(u_{01},0)$, lies above the isocline $X_{-1}$ ($D_-=0$), 
i.\,e., in the domain where $D_->0$. 
Accordingly, from the stationary point $(u_{01},0)$ the curve $\ell_\eps^-$ emerges below the isocline $X_{-1}$ (or, in other words, to the left of it) and diverges from it, moving to the left, thus remaining in the domain where $D_->0$. 
The fact that the curve $\ell_\eps^-$ passes between the branches of the hyperbola $X_1$ in the lower half-plane (i.\,e., in the domain where $D_+>0$) is obvious. 
The same holds for $\eps>\eps_0$, i.\,e., after the two fissures merge, when only one curve $\ell_\eps$ remains, 
which, without reaching the lower boundary, intersects the isocline $X_0$ and turns upward. 
Thus, its segment between the stationary points $(u_{02},0)$ and $(u_{01},0)$ lies in the lower half-strip in the domain where $D_+>0$ and $D_->0$.

Now let us follow the behavior of the integral curve of interest in the upper half of the strip to the left of the point $(u_{01},0)$. Here it does not matter whether $\eps>\eps_0$ or $\eps<\eps_0$, i.\,e., whether this is an arc of the curve $\ell_\eps^-$ or already an arc of $\ell_\eps$. What matters to us is that it emerges from the point $(u_{01},0)$ with a negative slope strictly greater than $-1$, while the ellipse $X_{-1}$ has slope exactly $-1$ at this point, and further in the upper strip its slope decreases to $-\infty$, after which the curve $X_{-1}$ turns to the right. Consequently, in this region our integral curve diverges from the isocline $X_{-1}$ and thus always remains in the domain where $D_->0$.

It remains to verify that our integral curve passes through the domain where $D_+>0$. 
It intersects the left branch of the hyperbola $X_1$ ($D_+=0$) at the stationary point $(u_{01},0)$ when entering the upper half of the strip. 
The hyperbola $X_1$ leaves the point $(u_{01},0)$ with slope $1$ and goes to the right, thus diverging from our integral curve, which continues to move in the domain where $D_+>0$. 
Thus, the only thing we need to check is that our integral curve has no other intersection points with the isocline $X_1$ inside the strip to the left of $(u_{01},0)$. 
Repeating the argument about the impossibility of intersection of the curve $\ell_\eps$ with the isocline $X_{-1}$, we consider two cases: $\eps\le\eps_{\ast}$ and $\eps>\eps_{\ast}$. In the first case, our integral curve is separated from the hyperbola $X_1$ by the isocline $X_{-1}$. 
In the second case, another stationary point appears inside the strip,
\eq{150201}{
\Big(u_0-\frac{a_3^++a_3^-}{a_3^+-a_3^-}\,\eps_\ast,\;\eps_\ast\Big),
}
symmetric to $x_{\ast}$ with respect to the center of symmetry $(u_0,0)$. 
Now, for $x_2\le\eps_{\ast}$ we are still separated from the hyperbola $X_1$ by the isocline $X_{-1}$, while for $x_2\ge\eps_{\ast}$ 
this role is taken over by the isocline $X_0$, which, like the isocline $X_1$, connects the stationary points~\eqref{150201} 
and $(u_{+1},\eps)$. Recall that $u_{+1}$ is the smaller root of the equation $D_+(u,\eps)=0$, i.\,e.,
\eq{150202}{
u_{+1}=-\eps+u_0-\frac{(a_2^+-a_2^-)+
\sqrt{d^2+36(a_3^+-a_3^-)a_3^+\eps^2}}{3(a_3^+-a_3^-)}\,.
}

Figure~\ref{150203} shows the behavior of the isoclines under the assumption $\eps_\ast<\eps<\eps_{\max}$. The behavior of the integral curve $\ell_\eps$ of interest is also depicted there for $\eps>\eps_0$.

\begin{figure}[h]
    \centering
    \includegraphics[scale = 0.3]{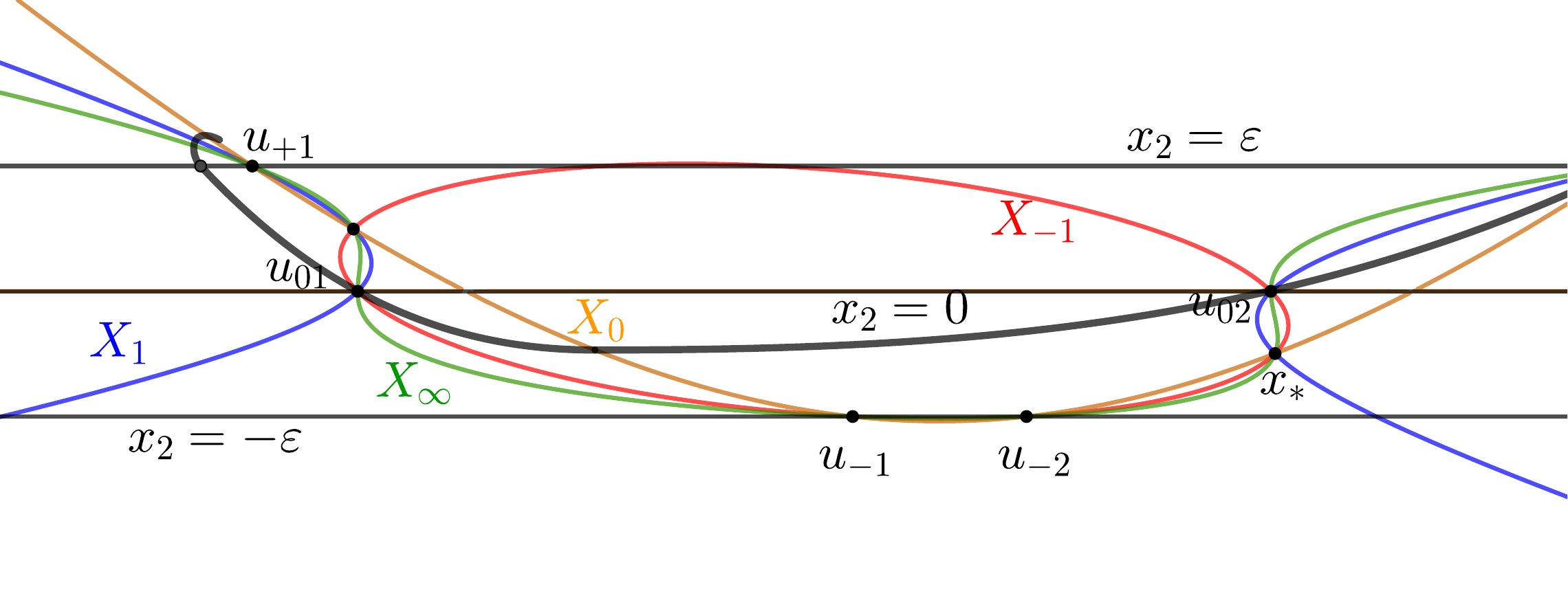}
    \caption{The isoclines $X_0$, $X_{\pm1}$, $X_\infty$ 
    for $\eps_\ast<\eps<\eps_{\max}$.}
    \label{150203}
\end{figure}

The behavior of the isoclines for $\eps>\eps_{\max}$ is almost the same. The only difference is that now the ellipse $X_{-1}$ lies entirely inside the strip, so the stationary points on the lower boundary disappear. Figure~\ref{150204} shows the behavior of the isoclines and the curve $\ell_\eps$ in this case.
\begin{figure}[h]
    \centering
    \includegraphics[scale = 0.3]{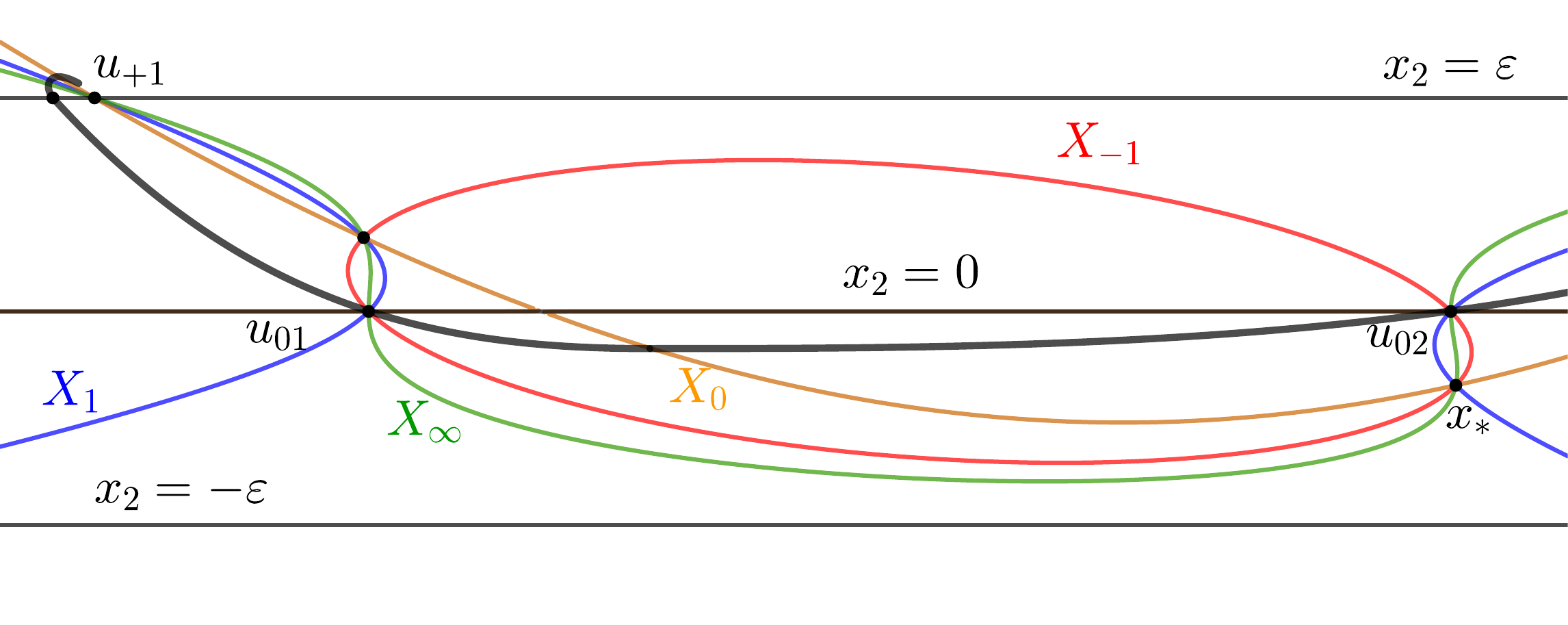}
    \caption{The isoclines $X_0$, $X_{\pm1}$, $X_\infty$ 
    for $\eps>\eps_{\max}$.}
    \label{150204}
\end{figure}

\section{Degenerate stationary points. Medium pockets}

In this section we investigate the situation where the root of the difference of derivatives $f'_+-f'_-$ has multiplicity two. In this case we obtain either the already known foliation with a square of bilinearity\footnote{Recall that we call a function bilinear (although it would be more accurate to call it diagonally linear) if it is linear along the directions $x_1\pm x_2=\const.$} from whose corners oppositely directed horizontal herringbones extend, or a new foliation, which we shall call a {\it medium pocket}. Medium pockets can be regarded as an intermediate foliation between minor and major pockets. As in those cases, this is a horizontal herringbone whose both ends lie on the same boundary of the strip, except that now the spine of this herringbone will be tangent to the midline of the strip. Thus, if the spine of the herringbone forming a minor pocket has no common points with the axis $x_2=0$, while in the case of major pockets there are two such points, then for medium pockets there is one.

Now we formulate the main statement of this section.

\begin{Th} 
\label{130801}
Let $u_0$ be a root of the difference $f'_+-f'_-$ of multiplicity two\textup, i.\,e.\textup,
$f'_+(u_0)=f'_-(u_0),$
$f''_+(u_0)=f''_-(u_0),$
and $f'''_+(u_0)\ne f'''_-(u_0)$. Suppose also that $f'''_+(u_0)\ne0$ and $f'''_-(u_0)\ne0$.

Then\textup, for small $\eps$\textup, in a neighborhood of the point $(u_0,0)$ there arises either a square of bilinearity with two oppositely directed horizontal herringbones\textup, if the values
$f'''_\pm(u_0)$ have opposite signs\textup, or a medium pocket\textup, if the signs of $f'''_\pm(u_0)$ are the same.
\end{Th}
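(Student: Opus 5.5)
The plan is to reduce to a local model near $(u_0,0)$ by Taylor expansion and then analyze the zero sets of $D_\pm$ and the stationary points of field~(5.3) in the small rectangle around $(u_0,0)$. Write $f'_+-f'_-=\tfrac12(b_+-b_-)(x_1-u_0)^2+O(|x_1-u_0|^3)$, where $b_\pm=f'''_\pm(u_0)$. For small $\eps$ the boundary data behave like the third-degree case with $a_3^\pm=b_\pm/6$ and discriminant $d=0$. So I would first treat $f_\pm$ as exact cubics with $d=0$, using the formulas of Section~\ref{141003} with $d\to0$. Then I would argue that the higher-order terms do not change the topology of the foliation, because all relevant quantities scale like $\eps$ and the nondegeneracy conditions ($b_+\ne b_-$, $b_\pm\ne0$) make every needed inequality strict after rescaling $x_1-u_0=\eps t$, $x_2=\eps\tau$.

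\emph{Case of opposite signs, say $b_+>0>b_-$.} Here the curve $D_+=0$ and the ellipse/hyperbola $D_-=0$ of the cubic model degenerate. With $d=0$, formula~\eqref{141101} gives roots of $D_+(u,\pm\eps)=0$ at $u_0\pm c\eps$ for an explicit constant $c$, and similarly for $D_-$. I would compute these four boundary points and check, via the sign conditions of Theorems~\ref{170301}--\ref{170313}, which of them admit herringbone tips. The expectation is that exactly one left tip (from Theorem~\ref{170301}, since $f'''_+>0$) and one right tip (from Theorem~\ref{170313}, since $f'''_-<0$) exist. Between them the only consistent candidate is a region where the Bellman candidate is linear along both diagonals, namely the square with vertices on the two boundaries. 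Its corners sit at the points where $D_\pm$ vanish on the boundary. I would verify the bilinearity square by writing the diagonally linear function determined by its four corner values, checking concavity conditions there (these reduce to the signs of $b_\pm$), and then gluing the two herringbones issuing from the opposite corners.

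\emph{Case of equal signs, say $b_\pm>0$.} Theorems~\ref{170301} and~\ref{170305} then both apply, at $(u_{+}+\eps,\eps)$ and $(u_-+\eps,-\eps)$. I would show that this is the limiting case $d\to0$ of Section~\ref{141003}. There $\eps_0$ and $\eps_{\max}$ scale like $d$, so for $d=0$ the merger of Theorem~\ref{160201} has already happened at $\eps=0$. The two nodes $u_{01},u_{02}$ coalesce into the single degenerate stationary point $(u_0,0)$, and the integral curve $\ell_\eps$ from the saddle $(u_++\eps,\eps)$ should therefore touch the midline there rather than cross it. The steps are as follows. First, compute the Jacobi matrix of~(5.3) at $(u_0,0)$; it is nilpotent or has a double zero, and its center-manifold direction is horizontal. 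Second, show by the isocline argument of Section~\ref{141003} (using $X_0$, $X_{\pm1}$) that $\ell_\eps$ descends to $(u_0,0)$ tangentially to $x_2=0$ and returns into the upper half-strip. Third, verify that it stays in $\{D_+>0,\,D_->0\}$ and exits the upper boundary to the left, yielding a herringbone with both ends on $x_2=\eps$, which is a medium pocket. The case $b_\pm<0$ follows by the symmetry used for Theorems~\ref{160311}--\ref{160313}.

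\emph{Main obstacle.} The main difficulty is the local analysis at the degenerate stationary point $(u_0,0)$ in the equal-sign case. Linearization is inconclusive there, so I would use a blow-up or quasi-homogeneous rescaling $x_1-u_0=\eps t$, $x_2=\eps\tau$. In these variables the leading part of the field is an explicit $\eps$-independent quadratic system determined by $b_\pm$. I would try first to integrate or phase-plane this model system exactly, as was done for cubics, to show that the separatrix from the boundary saddle is tangent to $\tau=0$. I would then use structural stability of the strict inequalities to pass back to general smooth $f_\pm$ for small $\eps$.
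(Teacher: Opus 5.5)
Your proposal has a genuine gap. In the opposite-sign case the geometry is wrong, and the key step is missing. The roots of $D_\pm$ on the boundary give the \emph{tips} of the herringbones, not corners of the square. For $f'''_+(u_0)>0>f'''_-(u_0)$ the left tip is $(u_+^r+\eps,\eps)$, where $u_+^r=u_0+(1+\alpha_0)\eps+O(\eps^2)$ and $\alpha_0=2\sqrt{f'''_+(u_0)/(f'''_+(u_0)-f'''_-(u_0))}$. The square, by contrast, has vertices $(u_0,\pm\eps)$, $(u_0\pm\eps,0)$, and the herringbones attach to its side vertices on the midline.

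What you must prove (Lemma~\ref{220201} and its symmetric versions) is that the integral curve $\ell_\eps$ of~(5.3) leaving the saddle $(u_+^r,\eps)$ ends at the degenerate stationary point $(u_0,0)$ while staying in $\{D_+>0,\,D_->0\}$. This is a trapping argument:
\begin{itemize}
\item Expanding $2x_2D_\pm$ along the rays $x_1=u_0+(\alpha+1)x_2$ shows that near $(u_0,0)$ the isocline $X_1$ consists of two branches with slopes $1/(1\pm\alpha_0)$.
\item $\ell_\eps$ starts to the right of the branch through $(u_+^r,\eps)$. It cannot cross that branch, because on $X_1$ the field has slope $1$ while the branch has slope close to $1/(1+\alpha_0)$.
\item $\ell_\eps$ cannot reach $x_2=0$ except at a stationary point.
\item In the region bounded by that branch, the line $x_1=u_+^r$ and the axis, one has $D_+>0$, and also $D_->0$ because $\alpha>0$ there.
\end{itemize}
Together with the symmetric statement for the right herringbone from $(u_0-\eps,0)$, Proposition~11.1 of the second paper with $C=(u_0,0)$ then inserts the square. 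Calling the square ``the only consistent candidate'', or checking concavity of a bilinear function built from four corner values, does not show that the herringbones actually reach those corners.

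In the equal-sign case you only gesture at this step, and three claims are wrong.
\begin{enumerate}
\item[(i)] Theorems~\ref{170301} and~\ref{170305} do not both apply. For $f'''_+(u_0)>f'''_-(u_0)>0$ the leading form of $2x_2D_-$ at $(u_0,0)$ is $\tfrac12\big(f'''_+(u_0)-f'''_-(u_0)\big)(x_1-u_0+x_2)^2+2f'''_-(u_0)x_2^2$. This form is positive definite, so $D_-<0$ on the lower half of a neighbourhood and there is no lower tip at all. Only one boundary carries a tip, selected by the sign of $f'''_+(u_0)-f'''_-(u_0)$; this is exactly what distinguishes the medium pocket from the major one.
\item[(ii)] The Jacobi matrix~(5.5) at $(u_0,0)$ vanishes identically, so linearization singles out no direction and there is no horizontal centre manifold. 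The tangency comes from the quadratic part of the field, i.e.\ from the slope equation~\eqref{280201}, whose only nonzero solutions are the two separatrix slopes $k_\pm<0$. Your rescaling $x_1-u_0=\eps t$, $x_2=\eps\tau$ only normalizes $\eps$ and leaves the origin just as degenerate.
\item[(iii)] $\ell_\eps$ does not pass through $(u_0,0)$ and ``return''; it ends there. The spine must be completed by \emph{choosing} an orbit tangent to the axis from the left. The separatrices are excluded, since they do not join $\ell_\eps$ in a $C^1$ way. You then check $D_->0$, which is automatic from the definite form in the upper half-plane, and $D_+>0$, by taking the exit point $v_+^l$ to the left of the left root of $D_+(u,\eps)=0$. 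So the spine is not determined by the flow alone.
\end{enumerate}

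Finally, the detour through exact cubics plus ``structural stability'' is unnecessary, and at a singular point with zero linear part it is unjustified as stated. Since $D_\pm$ do not depend on $\eps$, the ray expansions hold on a fixed neighbourhood of $(u_0,0)$ for arbitrary smooth $f_\pm$. For small $\eps$ the relevant $O(\eps)$-piece of the strip lies inside that neighbourhood.
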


Let us formulate a series of statements detailing the above theorem for different signs of the third derivatives $f'''_\pm(u_0)$. In all of these propositions, the conditions of the theorem will be assumed to hold, i.\,e., $u_0$ will be a multiple root of the difference $f'_+-f'_-$.

\begin{Prop}
\label{130802}
If $f'''_+(u_0)>0$ and $f'''_-(u_0)<0,$ then for small $\eps$ near the point $x_1=u_0$ one can construct the following foliation\textup:
\eq{140801}{
\begin{aligned}
\Om{R}(u_1,u_-^l)&\cup\Om{HB}\big((u_0-\eps,0),(u_-^l-\eps,-\eps)\big)\cup\Om{Rect}(u_0+\eps,0)\cup
\\
&\cup\Om{HB}\big((u_0+\eps,0),(u_+^r+\eps,\eps)\big)\cup\Om{R}(u_+^r,u_2)
\end{aligned}
}

{\begin{itemize}
    \item a square with vertices at the points $(u_0,\pm\eps)$ and $(u_0\pm\eps,0)$\textup, on which the function is bilinear \textup($\Om{Rect}(u_0+\eps,0)$\textup)\textup;
    \item from the right corner of the square there extends a left horizontal herringbone with the tip on the upper boundary at the point $(u_+^r+\eps,\eps),$ where $u_+^r$ is the root of the equation $D_+(u,\eps)=0$ closest to $u_0$ on the right\textup; this herringbone foliates the domain $\Om{HB}\big((u_0+\eps,0),(u_+^r+\eps,\eps)\big)$\textup;
    \item from the left corner of the square there extends a right horizontal herringbone with the tip on the lower boundary at the point $(u_-^l-\eps,-\eps),$ where $u_-^l$ is the root of the equation $D_-(u,\eps)=0$ closest to $u_0$ on the left\textup; this herringbone foliates the domain $\Om{HB}\big((u_0-\eps,0),(u_-^l-\eps,-\eps)\big)$\textup;
    \item to the left of the point $(u_-^l,0)$ there is a right simple foliation\textup, $\Om{R}(u_1,u_-^l)$\textup;
    \item to the right of the point $(u_+^r,0)$ there is a right simple foliation\textup, $\Om{R}(u_+^r,u_2)$.
\end{itemize}}  
\end{Prop}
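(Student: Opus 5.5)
Following the pattern of the earlier constructions in the series, the plan is to assemble the foliation~\eqref{140801} from pieces whose existence is already guaranteed, and then to check that they glue together continuously. This means checking that the candidate Bellman function is continuous and locally diagonally concave across the common boundaries.

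First, I will expand the data near $u_0$. Write $a_\pm=f'''_\pm(u_0)$, with $a_+>0>a_-$. Taylor expansion gives
\[
f'_+(u)-f'_-(u)=\tfrac12(a_+-a_-)(u-u_0)^2+O(|u-u_0|^3).
\]
From this I locate the roots of $D_+(u,\eps)=0$ and $D_-(u,\eps)=0$ near $u_0$ for small $\eps$. Because $a_+>0$, the root $u_+^r$ of $D_+(u,\eps)=0$ to the right of $u_0$ satisfies $u_+^r-u_0\asymp\eps$. At this root I verify the strict conditions~\eqref{170303} and~\eqref{170304} of Theorem~\ref{170301}. Condition~\eqref{170304} is just $a_+>0$ by continuity. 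For~\eqref{170303}, the term $f''_+(u+\eps)-f''_-(u-\eps)$ is approximately $a_+(u+\eps-u_0)-a_-(u-\eps-u_0)$, and a leading-order computation in $\eps$ should give a positive multiple of $\eps$. Symmetrically, at $u_-^l$ I verify the conditions of the right-herringbone analogue for a tip on the lower boundary, Theorem~\ref{170313}, where $f'''_-<0$ is exactly $a_-<0$. This yields the two herringbones, with tips at $(u_+^r+\eps,\eps)$ and $(u_-^l-\eps,-\eps)$.

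Next, I build the square of bilinearity $\Om{Rect}(u_0+\eps,0)$ with vertices $(u_0,\pm\eps)$ and $(u_0\pm\eps,0)$. The bilinear function is determined by the boundary values at the two vertices $(u_0,\pm\eps)$ together with the gradient data, and it must be matched along the sides to the herringbones. The key point is that each herringbone's spine must start at a corner $(u_0\pm\eps,0)$ of the square. In the language of field~(5.3), the integral curve issuing from the saddle point $(u_+^r,\eps)$ (slope~\eqref{150304}) must reach the midline exactly at the stationary point $(u_0,0)$, which is the double root of $f'_+-f'_-$, and must do so with the limiting slope of the square's side. I plan to show this by a local analysis of the degenerate stationary point $(u_0,0)$. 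There the Jacobi matrix has a zero eigenvalue because the root is double, and a center-manifold or blow-up argument should show that exactly one integral curve in the upper half-strip enters $(u_0,0)$ from the right, and that this curve coincides with $\ell_\eps$. Shifting by $\eps$ then puts the spine's endpoint at $(u_0+\eps,0)$. The same argument, mirrored, handles the right herringbone in the lower half-strip.

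After that, I check the sign conditions $D_+>0$ and $D_->0$ along both spines. I will argue as in Section~\ref{141003}: a curve cannot cross an isocline except at stationary points, and near the start points the divergence of $\ell_\eps$ from the curve $D_+=0$ was established in the proof of Theorem~\ref{160310}. Finally, the simple right foliations outside, $\Om{R}(u_1,u_-^l)$ and $\Om{R}(u_+^r,u_2)$, attach to the herringbones in the standard way via Theorems~\ref{170301} and~\ref{170313}.

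I expect the main obstacle to be the degenerate stationary point $(u_0,0)$. I need to show that the relevant integral curves actually terminate there rather than passing by it, and that the square's bilinear function glues to the herringbone functions concavely. My first attempt will be to rescale $x_1-u_0=\eps\xi$, $x_2=\eps\eta$. To leading order this turns field~(5.3) into an explicit $\eps$-independent polynomial field depending only on $a_\pm$, in which the square and the separatrices can be computed exactly. The existence of the picture for small $\eps$ would then follow by perturbation.
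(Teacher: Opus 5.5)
Your decomposition matches the paper's. The tips come from Theorems~\ref{170301} and~\ref{170313}, the spines must run into the corners $(u_0\pm\eps,0)$, and the square is inserted between them. The paper does the gluing by quoting Proposition~11.1 of the second paper with $C=(u_0,0)$; its hypothesis is exactly that both spine-generating integral curves end at the same point $C$.

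The gap is the step you single out: proving that $\ell_\eps$, issuing from $(u_+^r,\eps)$, actually ends at $(u_0,0)$. Your proposed local mechanism is wrong. At $(u_0,0)$ the entire Jacobi matrix~(5.5) vanishes, not just one eigenvalue, so a center-manifold reduction gives nothing. With $y=(x_1-u_0,x_2)$ and $a_\pm=f'''_\pm(u_0)$, the leading part of~(5.3) is
\[
\dot y_1=\tfrac{\eps}{2}(a_+-a_-)(y_1^2-y_2^2),\qquad
\dot y_2=\eps y_2\big[(a_+-a_-)y_1+(a_++a_-)y_2\big].
\]
Because $a_+a_-<0$, equation~\eqref{280201} has only the root $k=0$. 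After the blow-up $x_2=k(x_1-u_0)$, the point $k=0$ is a node with double eigenvalue $\tfrac{\eps}{2}(a_+-a_-)$. Hence a whole family of integral curves, $x_2\approx C(x_1-u_0)^2$ with arbitrary $C>0$, enters $(u_0,0)$ from the right in the upper half-plane. There is no uniqueness to exploit, and no local analysis can decide whether $\ell_\eps$, which starts at distance of order $\eps$, lands there. Note also that the spine reaches the corner tangent to the midline, not with the slope of the square's sides; the sides are the last ribs. The rescaling $x_1-u_0=\eps\xi$, $x_2=\eps\eta$ does not help. It only moves the same question to the model field (the cubic case with a double root). Returning to small $\eps$ would then require perturbing around an equilibrium with zero linear part.

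The missing idea is the global trapping argument of Lemma~\ref{220201}. Let $\omega$ be the curvilinear triangle bounded by three pieces:
\begin{itemize}
\item the branch of $D_+=0$ joining $(u_0,0)$ to $(u_+^r,\eps)$, whose slope is close to $1/(1+\alpha_0)<1$ by~\eqref{230202};
\item the segment $x_1=u_+^r$;
\item the midline.
\end{itemize}
By the slope comparison in the proof of Theorem~\ref{160310}, $\ell_\eps$ enters $\omega$. It cannot leave through that branch: on the branch the field has slope $1$, larger than the branch's own slope, so an integral curve moving leftwards can cross it only into $\omega$. (Your blanket claim that integral curves cannot cross isoclines except at stationary points is false; only this one-sided version holds.) It cannot cross the invariant line $x_2=0$, and $(u_0,0)$ is the only stationary point in $\overline{\omega}$, so $\ell_\eps$ must end there. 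Inside $\omega$, $D_+>0$ by construction and $D_->0$ by the expansion~\eqref{260201}, since $\alpha>0$ there. The mirrored argument (Lemma~\ref{260204}) gives the lower herringbone ending at $(u_0-\eps,0)$. Only after both spines are known to end at the common point $(u_0,0)$ can the square be glued in.
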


\begin{figure}[h]
    \centering
    \includegraphics[scale = 0.4]{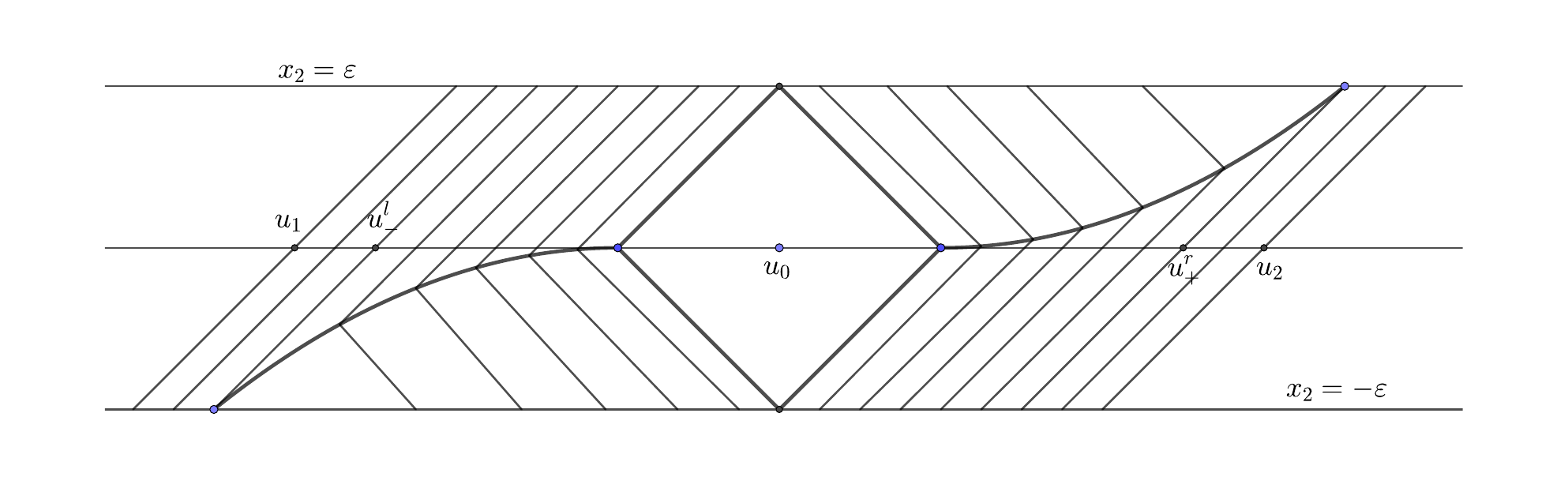}
    \caption{The foliation in Proposition~\ref{130802}.}
    \label{140802}
\end{figure}

Symmetry with respect to either of the axes leads to the following statement.

\begin{Prop}
\label{130803}
If $f'''_+(u_0)<0$ and $f'''_-(u_0)>0,$ then for small $\eps$ near the point $x_1=u_0$ one can construct the following foliation\textup:
\eq{140803}{
\begin{aligned}
\Om{L}(u_1,u_+^l)&\cup\Om{HB}\big((u_0\!-\!\eps,0),(u_+^l\!\!-\!\eps,\eps)\big)\cup\Om{Rect}(u_0\!+\!\eps,0)\cup
\\
&\cup\Om{HB}\big((u_0\!+\!\eps,0),(u_-^r\!\!+\!\eps,-\eps)\big)\cup\Om{L}(u_-^r,u_2)
\end{aligned}
}

{\begin{itemize}
    \item a square with vertices at the points $(u_0,\pm\eps)$ and 
    $(u_0\pm\eps,0)$\textup, on which the function is bilinear 
    \textup($\Om{Rect}(u_0+\eps,0)$\textup)\textup;
    \item from the right corner of the square there extends a left horizontal 
    herringbone with the tip on the lower boundary at the point $(u_-^r\!+\eps,-\eps),$ 
    where $u_-^r$ is the root of the equation $D_-(u,-\eps)=0$ closest to $u_0$ on the right\textup; this herringbone foliates the domain  
    $\Om{HB}\big((u_0+\eps,0),(u_-^r\!+\eps,-\eps)\big)$\textup;
    \item from the left corner of the square there extends a right horizontal 
    herringbone with the tip on the upper boundary at the point $(u_+^l+\eps,\eps),$ 
    where $u_+^l$ is the root of the equation $D_+(u,-\eps)=0$ closest to $u_0$ on the left\textup; this herringbone foliates the domain 
    $\Om{HB}\big((u_0-\eps,0),(u_+^l-\eps,\eps)\big)$\textup;
    \item to the left of the point $(u_+^l,0)$ there is a left simple 
    foliation\textup, $\Om{L}(u_1,u_+^l)$\textup;
    \item to the right of the point $(u_-^r,0)$ there is a left simple 
    foliation\textup, $\Om{L}(u_-^r,u_2)$.
\end{itemize}}  
\end{Prop}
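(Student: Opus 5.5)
The plan is to obtain Proposition~\ref{130803} from Proposition~\ref{130802} by a symmetry of the problem rather than by repeating the construction. I would use the reflection $x_2\mapsto -x_2$, which exchanges the roles of $f_+$ and $f_-$. Under it the Bellman-type problem with boundary values $(f_+,f_-)$ becomes the problem with boundary values $(\tilde f_+,\tilde f_-)=(f_-,f_+)$ on the same strip $\Oe$.

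First I check that the hypotheses transform correctly. The point $u_0$ remains a double root of $\tilde f'_+-\tilde f'_-$. The third derivatives are exchanged, so the hypotheses $f'''_+(u_0)<0$, $f'''_-(u_0)>0$ become $\tilde f'''_+(u_0)>0$, $\tilde f'''_-(u_0)<0$. These are exactly the hypotheses of Proposition~\ref{130802}. That proposition then gives, for small $\eps$, the foliation~\eqref{140801} for the reflected problem.

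Next I pull this foliation back through the reflection and translate each piece.
\begin{itemize}
\item The square $\Om{Rect}(u_0+\eps,0)$ is symmetric with respect to $x_2=0$, so it is preserved. The bilinearity of the function is also preserved, since the reflection only exchanges the two diagonal directions $x_1\pm x_2=\const$.
\item Directions of herringbones are preserved, because the reflection does not change $x_1$. The left herringbone from the right corner, with tip on the upper boundary, becomes a left herringbone with tip on the lower boundary. Correspondingly, the right herringbone from the left corner now has its tip on the upper boundary.
\item The right simple foliations $\Om{R}$ become left simple foliations $\Om{L}$. This is because extremal segments of direction $x_1-x_2=\const$ go to segments of direction $x_1+x_2=\const$.
\end{itemize}

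The main bookkeeping step is to identify the tip parameters. For this I will use the explicit formulas for $D_\pm$ (from~(4.x) of~\cite{First}). I verify that $\tilde D_+(u,\eps)$ for the reflected data coincides with $D_-(u,-\eps)$ for the original data, and that $\tilde D_-(u,\eps)$ coincides with $D_+(u,-\eps)$. The same correspondence appears in the pairing of Theorems~\ref{160310}--\ref{160313} and of Theorems~\ref{170301}--\ref{170313}. Hence:
\begin{itemize}
\item the root $\tilde u_+^r$ of $\tilde D_+(u,\eps)=0$ nearest to $u_0$ on the right is the root $u_-^r$ of $D_-(u,-\eps)=0$;
\item the root $\tilde u_-^l$ is the root $u_+^l$ of $D_+(u,-\eps)=0$ nearest to $u_0$ on the left.
\end{itemize}
With these identifications, the reflected picture is precisely~\eqref{140803}.

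The one thing I expect to need care with is sign conventions, specifically whether the tip abscissas are written as $u\pm\eps$. To settle this, I will check that the reflected tip conditions are exactly those of Theorems~\ref{170305} and~\ref{170309}. The transformed inequalities in those theorems have the required strict signs, which follows from $f'''_\mp(u_0)\gtrless0$ and smallness of $\eps$. This check ensures that the herringbones really exist with the stated tips, independently of the symmetry bookkeeping.

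As a fallback, the alternative symmetry $x_1\mapsto -x_1$ (with $f_\pm(t)\mapsto f_\pm(-t)$) also works. It flips the signs of the third derivatives and exchanges left and right. Composing it with the previous reflection gives a consistency check of the final picture.
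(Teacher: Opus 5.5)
Your proposal is correct and follows the paper's route. The paper derives Proposition~\ref{130803} from Proposition~\ref{130802} purely by the symmetry of the problem with respect to the coordinate axes, and your reflection $x_2\mapsto-x_2$ (swapping $f_+$ and $f_-$, so that $\tilde D_\pm(x_1,x_2)=D_\mp(x_1,-x_2)$) is exactly that symmetry written out. Your extra check against Theorems~\ref{170305} and~\ref{170309} is not needed, but it does correctly place the tip of the right herringbone at $(u_+^l-\eps,\eps)$ as in~\eqref{140803}, so the $(u_+^l+\eps,\eps)$ in the bullet list is a misprint.
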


\begin{figure}[h]
    \centering
    \includegraphics[scale = 0.4]{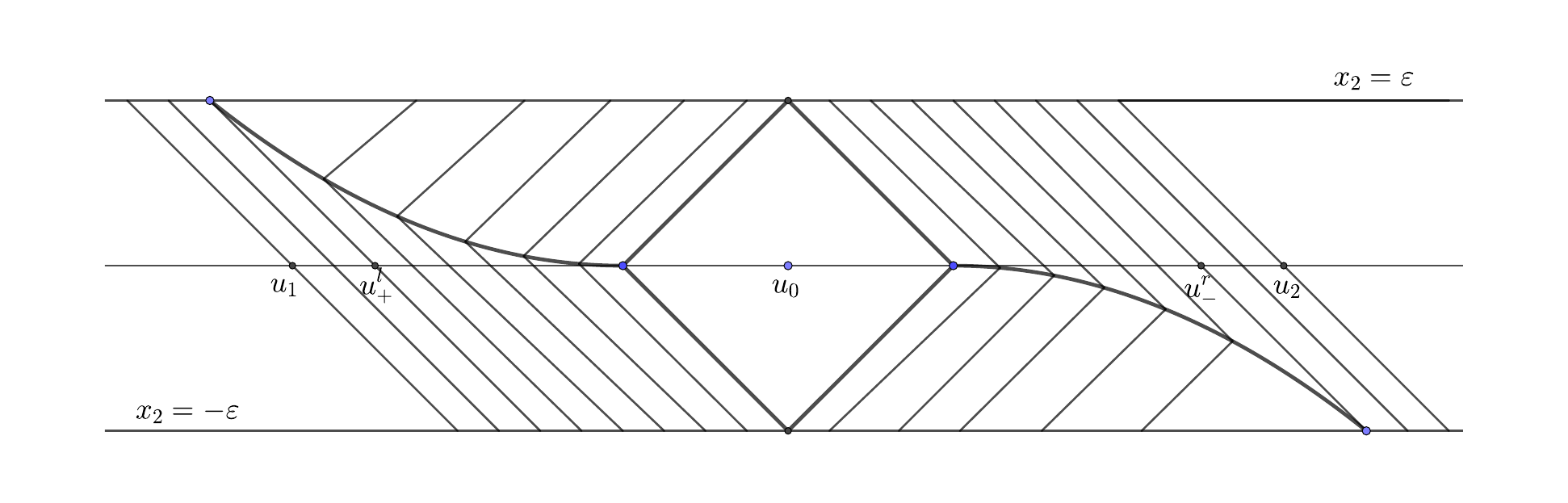}
    \caption{The foliation in Proposition~\ref{130803}.}
    \label{140802a}
\end{figure}

Now we formulate the propositions describing the foliation in the cases where the third derivatives of the boundary functions have the same sign at $u_0$.

\begin{Prop}
\label{130804}
If $f'''_+(u_0)>f'''_-(u_0)>0,$ then for small $\eps$, from some point $(v_+^l+\eps,\eps)$ on the upper boundary there extends a left herringbone that passes through the upper half of the strip\textup, is tangent to the axis $x_1$ at the point $(u_0+\eps,0)$\textup, and ends on the upper boundary at the point $(u_+^r+\eps,\eps),$ where $u_+^r$ is the root of the equation $D_+(u,\eps)=0$ closest to $u_0$ on the right. Thus\textup, the following foliation can be constructed\textup:
\eq{170802a}{
\Om{R}(u_1,v_+^l)\cup\Om{HB}\big((v_+^l\!+\!\eps,\eps),(u_+^r\!\!+\!\eps,\eps)\big)
\cup\Om{R}(u_+^r,u_2).
}
\end{Prop}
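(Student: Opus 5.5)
The plan is to construct the spine of the herringbone as an integral curve of field~(5.3), shifted to the right by $\eps$, exactly as for the left herringbones of Theorems~\ref{170301} and~\ref{160201}. Because $u_0$ is a double root of $f'_+-f'_-$, everything happens at scale $\eps$. So I rescale
\[
x_1=u_0+\eps\xi,\qquad x_2=\eps\eta,
\]
and replace $f_\pm$ by their Taylor polynomials of degree three at $u_0$. Write $a_\pm=f'''_\pm(u_0)$. Then to leading order in $\eps$ the rescaled field and the functions $\eps^{-1}D_\pm$ do not depend on $\eps$, and the problem reduces to a model problem with cubic boundary data whose difference $f'_+-f'_-$ has the double root $u_0$. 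Structurally stable features of the model (saddles, transversal crossings, strict inequalities) then persist for small $\eps$ by smooth dependence on the data. This is the same continuity argument used at the end of the proof of Theorem~\ref{160201}.

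\textbf{Step 1: the tip.} A direct expansion with $t=u-u_0$ gives
\[
D_+(u,\eps)\approx\frac{a_+-a_-}{4\eps}\,(t-\eps)^2-a_+\eps .
\]
Hence the root closest to $u_0$ on the right is
\[
u_+^r\approx u_0+\eps\Big(1+2\sqrt{a_+/(a_+-a_-)}\Big).
\]
At this root the quantity in condition~\eqref{170303} is approximately $(a_+-a_-)(t-\eps)>0$, and condition~\eqref{170304} reads $a_+>0$. So Theorem~\ref{170301} applies: the integral curve $\ell_\eps$ leaves the saddle $(u_+^r,\eps)$ with a slope in $(0,1)$, to the right of the hyperbola $D_+=0$, and locally it lies in the domain where $D_+>0$ and $D_->0$.

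\textbf{Step 2: the descent to the midline.} The axis $x_2=0$ is invariant for the field, since $\dot x_2$ contains the factor $x_2$ (see~\eqref{191201}). Near $u_0$ the only stationary point on this axis is $(u_0,0)$, and it is degenerate (a saddle-node type point), because the root is double. Thus $\ell_\eps$ cannot cross the midline: it must run into $(u_0,0)$ tangentially to the axis and then leave it back into the upper half-strip, going to the left. I will check this on the model system by writing out the Jacobi matrix at $(\xi,\eta)=(0,0)$, which has a zero eigenvalue with a horizontal eigenvector, together with the second-order terms. I will also check that the arc of the centre manifold used by $\ell_\eps$ is the one lying above the axis. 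This is exactly where the hypothesis $a_->0$ enters: it makes the point $(u_0,0)$ behave as in the pocket situation rather than as the corner of a bilinearity square, which is the case of Proposition~\ref{130802}.

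\textbf{Step 3: the return to the upper boundary.} After leaving $(u_0,0)$, the curve runs left through the upper half-strip. I will show that it reaches the upper boundary at some point $(v_+^l,\eps)$, and that on its whole path it stays in the region $D_+>0$, $D_->0$ with slope in $[-1,0]$ on this branch. I plan to use the isocline technique of Section~\ref{141003}: the curve leaves $(u_0,0)$ diverging from the isoclines $X_{\pm1}$, and it cannot meet them later because there are no further stationary points in the model strip. The condition $a_+>a_->0$ keeps the ellipse $X_{-1}$ and the hyperbola $X_1$ on the correct sides of the curve. The resulting foliation is $\Om{HB}\big((v_+^l+\eps,\eps),(u_+^r+\eps,\eps)\big)$, glued to right simple foliations on both sides, which gives~\eqref{170802a}.

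I expect Step~2 to be the main obstacle. One has to analyse a non-hyperbolic stationary point and prove that the separatrix arriving from the saddle on the boundary really passes through the degenerate point tangentially to the midline, rather than being captured along the axis. What I would try first is a blow-up (or explicit integration) of the homogeneous model system in the rescaled variables.
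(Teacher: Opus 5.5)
\textbf{Step~2 has a genuine gap.} Your Step~1 matches the tip verification in Lemma~\ref{220201}. Step~2, however, rests on a wrong local picture, and the idea that actually finishes the proof is missing.

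At $(u_0,0)$ every entry of the Jacobi matrix (5.5) vanishes: at $x_2=0$ only $f'_+-f'_-$ and $f''_+-f''_-$ survive, and both are zero. So the linear part is the zero matrix. The point is not of saddle-node type, and there is no centre manifold for $\ell_\eps$ to use. With $y=x_1-u_0$ and $a_\pm=f'''_\pm(u_0)$, the leading part of~(5.3) is homogeneous quadratic: $\dot y\approx\tfrac{\eps}{2}(a_+-a_-)(y^2-x_2^2)$ and $\dot x_2\approx\eps x_2\big[(a_+-a_-)y+(a_++a_-)x_2\big]$. Substituting $x_2=ky$ gives \eqref{280201}. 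Besides $k=0$, its only roots are the two separatrix slopes \eqref{280202}, both negative. Hence every other integral curve through $(u_0,0)$ is tangent to the axis.

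Near the point, the upper half-plane therefore splits into three sectors:
\begin{itemize}
\item a parabolic sector of curves tangent to the axis from the right, which contains $\ell_\eps$ (trapped in the region $\omega$ of Lemma~\ref{220201});
\item a hyperbolic sector between the two separatrices;
\item a second parabolic sector of curves tangent to the axis from the left.
\end{itemize}
An integral curve cannot pass through a stationary point. So $\ell_\eps$ does not ``leave $(u_0,0)$ back into the upper half-strip''; it terminates there. The left half of the spine must be a \emph{separately chosen} curve from the left parabolic family, and the junction is $C^1$ because both pieces are tangent to the axis. This choice is not forced by the dynamics; the paper explicitly leaves $v_+^l$ undetermined. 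The one distinguished candidate, the separatrix with slope in $(-1,0)$, is excluded because it meets the axis at a nonzero angle. So the obstacle you single out (showing that ``the separatrix'' passes through tangentially rather than being captured by the axis) is the wrong question. A blow-up would only return \eqref{280201}, and you would still need the gluing idea.

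\textbf{What remains, and the right reason in Step~3.} You still have to check $D_\pm>0$ along the chosen left curve, and your stated reason (``no further stationary points'') is not the operative one.
\begin{itemize}
\item $D_->0$ in a neighbourhood of $(u_0,0)$ in the upper half-plane follows directly from \eqref{010301}. This, together with the reality of the separatrix slopes, is where $f'''_-(u_0)>0$ is used.
\item For $D_+$, the left branch of $X_1$ leaves $(u_0,0)$ with slope $1/(1-\alpha_0)\in(-1,0)$, while integral curves cross $X_1$ with slope $1$. A curve running leftwards below this branch can therefore never get above it.
\end{itemize}
Accordingly, it suffices to start the left curve at a point $(v_+^l,\eps)$ far enough to the left. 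It should lie to the left of the left root $u_+^l$ of $D_+(u,\eps)=0$ (the paper's closing step). It should also lie to the left of the exit point of the shallow separatrix, so that the curve belongs to the left parabolic sector.
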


\begin{figure}[h]
    \centering
    \includegraphics[scale = 0.25]{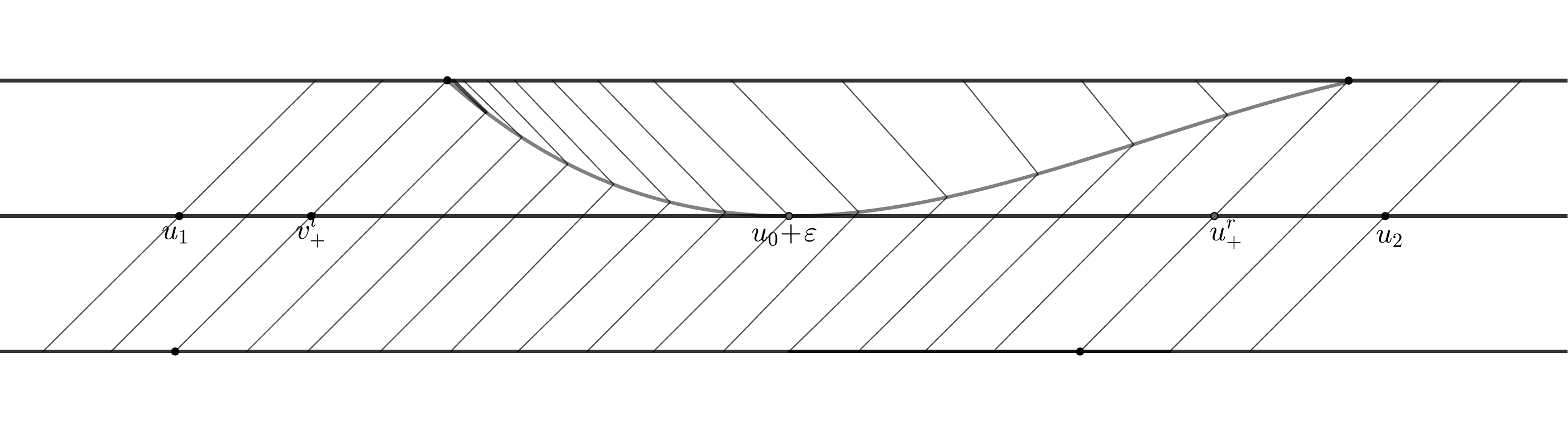}
    \caption{The foliation in Proposition~\ref{130804}.}
    \label{170801}
\end{figure}

The symmetric statement with respect to the axis $x_1=0$ has the following form.

\begin{Prop}
\label{130805}
If $f'''_+(u_0)<f'''_-(u_0)<0,$ then for small $\eps$\textup, from some point $(v_+^r-\eps,\eps)$ on the upper boundary there extends a right herringbone that passes through the upper half of the strip\textup, is tangent to the axis $x_2=0$ at the point $(u_0-\eps,0)$\textup, and ends on the upper boundary at the point $(u_+^l-\eps,\eps),$ where $u_+^l$ is the root of the equation $D_+(u,-\eps)=0$ closest to $u_0$ on the left. Thus\textup, the following foliation can be constructed\textup:
\eq{170802}{
\Om{L}(u_1,u_+^l)\cup\Om{HB}\big((v_+^r\!\!-\!\eps,\eps),(u_+^l\!-\!\eps,\eps)\big)
\cup\Om{L}(v_+^r,u_2).
} 
\end{Prop}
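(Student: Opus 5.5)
Proposition~\ref{130805} is the mirror image of Proposition~\ref{130804} under the reflection $x_1\mapsto -x_1$, so the plan is to reduce to that case rather than redo the local analysis. Put $\tilde f_\pm(t)=f_\pm(-t)$ and $\tilde u_0=-u_0$.

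First, check that the reflected data satisfy the hypotheses of Proposition~\ref{130804}. Since $\tilde f'_\pm(t)=-f'_\pm(-t)$, the point $\tilde u_0$ is again a double root of $\tilde f'_+-\tilde f'_-$. The third derivatives change sign: $\tilde f'''_\pm(\tilde u_0)=-f'''_\pm(u_0)$. Hence the condition $f'''_+(u_0)<f'''_-(u_0)<0$ becomes
\[
\tilde f'''_+(\tilde u_0)>\tilde f'''_-(\tilde u_0)>0,
\]
which is exactly the hypothesis of Proposition~\ref{130804}.

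Second, track how the geometric objects transform under the reflection.
\begin{itemize}
\item The upper and lower boundaries are preserved.
\item The directions $x_1\pm x_2=\const$ are interchanged, so left foliations become right ones and vice versa.
\item A left herringbone maps to a right herringbone.
\item The stationary-point equation $D_+(u,\eps)=0$ for $\tilde f$ becomes $D_+(-u,-\eps)=0$ for $f$. This is the same correspondence as between Theorems~\ref{170301} and~\ref{170309}: the tip $(u+\eps,\eps)$ of a left herringbone goes to the tip $(u-\eps,\eps)$ of a right one.
\item The root of $\tilde D_+(\cdot,\eps)=0$ nearest to $\tilde u_0$ on the right maps to the root $u_+^l$ of $D_+(u,-\eps)=0$ nearest to $u_0$ on the left.
\item The tangency point $(\tilde u_0+\eps,0)$ maps to $(u_0-\eps,0)$.
\item The base point $\tilde v_+^l$ maps to the base point $v_+^r$.
\end{itemize}

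Third, reflect the foliation~\eqref{170802a} for $\tilde f$ to obtain~\eqref{170802}. In the reflection, $\Om{R}$ becomes $\Om{L}$ and the order of the pieces along $x_1$ is reversed.

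The only point needing care is to confirm that the Bellman-function construction is genuinely invariant under this reflection. The diagonal concavity directions are swapped, and the fields~(5.3) for left and right herringbones are interchanged, consistently with how Theorems~\ref{160310}--\ref{160313} and~\ref{170301}--\ref{170313} were stated as symmetric counterparts. Once this invariance is noted, the statement follows immediately from Proposition~\ref{130804}. I expect no real obstacle beyond the bookkeeping of the labels $l,r$ and the signs of $\eps$.
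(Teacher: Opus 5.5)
Your proposal is correct and matches the paper's approach. The paper proves only Proposition~\ref{130804} and obtains Proposition~\ref{130805} as its mirror image under the symmetry about the axis $x_1=0$. Your bookkeeping is consistent with the paper's symmetric statements: $\tilde D_+(u,\eps)=D_+(-u,-\eps)$, $\Om{R}(a,b)\mapsto\Om{L}(-b,-a)$, $u_+^l=-\tilde u_+^r$ and $v_+^r=-\tilde v_+^l$ agree with how Theorems~\ref{170301} and~\ref{170309} correspond.
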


\begin{figure}[h]
    \centering
    \includegraphics[scale = 0.25]{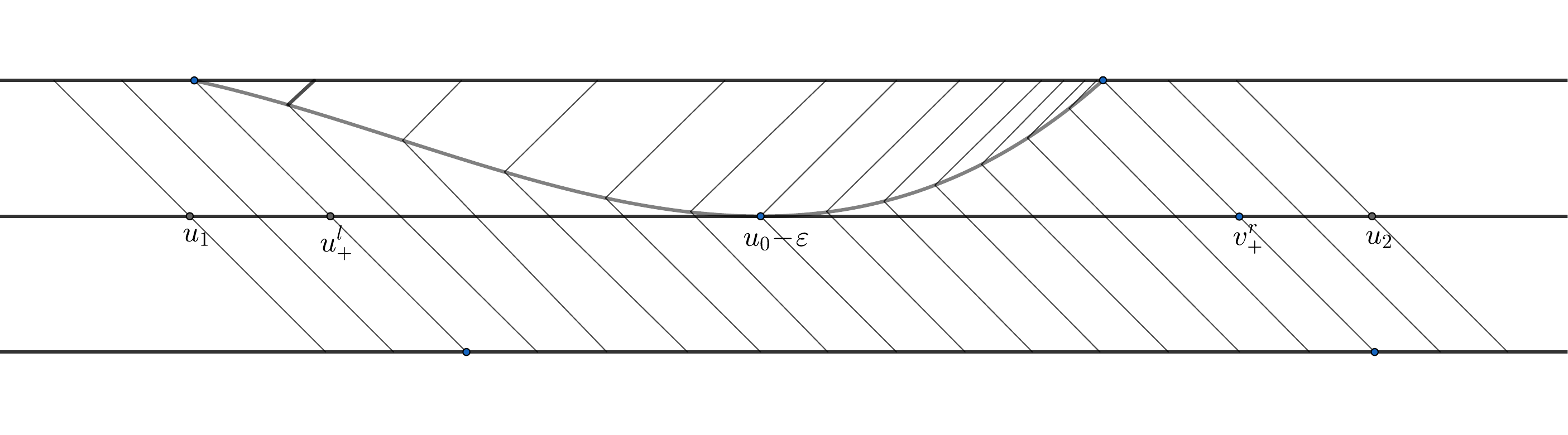}
    \caption{The foliation in Proposition~\ref{130805}.}
    \label{240801}
\end{figure}

A pair of statements obtained by reflection with respect to the axis $x_2=0$ is contained in the following two propositions.

\begin{Prop}
\label{130806}
If $f'''_-(u_0)>f'''_+(u_0)>0,$ then for small $\eps$\textup, from some point $(v_-^l\!+\eps,-\eps)$ on the lower boundary there extends a left herringbone that passes through the lower half of the strip\textup, is tangent to the axis $x_2=0$ at the point $(u_0+\eps,0)$\textup, and ends on the lower boundary at the point $(u_-^r\!+\eps,-\eps),$ where $u_-^r$ is the root of the equation $D_-(u,-\eps)=0$ closest to $u_0$ on the right. Thus\textup, the following foliation can be constructed\textup:
\eq{131001}{
\Om{L}(u_1,v_-^l)\cup\Om{HB}\big((v_-^l\!\!+\!\eps,-\eps),(u_-^r\!\!+\!\eps,-\eps)\big)
\cup\Om{L}(u_-^r,u_2).
} 
\end{Prop}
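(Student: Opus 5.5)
The plan is to derive Proposition~\ref{130806} from Proposition~\ref{130804} by symmetry rather than by a fresh analysis. The reflection $x_2\mapsto -x_2$ maps the strip $\Oe$ onto itself, interchanges the upper and lower boundaries, and hence interchanges the roles of $f_+$ and $f_-$. It also interchanges the two diagonal directions $x_1\pm x_2=\const$. Under this reflection, left herringbones remain left herringbones, since horizontal orientation is preserved. The field~(5.3) goes over into the analogous field written with $f_+$ and $f_-$ swapped, the function $D_+(u,\eps)$ goes over into $D_-(u,-\eps)$, and the isocline $X_1$ goes over into $X_{-1}$. This is exactly the correspondence already used to pass from Theorem~\ref{170301} to Theorem~\ref{170305} and from Proposition~\ref{130802} to Proposition~\ref{130803}.

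First I will check that the hypotheses transform correctly. A double root $u_0$ of $f'_+-f'_-$ stays a double root after the swap. The condition $f'''_-(u_0)>f'''_+(u_0)>0$ becomes exactly the condition $f'''_+(u_0)>f'''_-(u_0)>0$ of Proposition~\ref{130804}. The reflected problem therefore has a medium pocket in its upper half. Reflecting back, I obtain a left herringbone in the lower half of the strip. It is tangent to $x_2=0$ at $(u_0+\eps,0)$ and ends at $(u_-^r+\eps,-\eps)$, where $u_-^r$ is the root of $D_-(u,-\eps)=0$ closest to $u_0$ on the right. The simple right foliations flanking the pocket become simple left foliations, because the diagonal directions are swapped. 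This gives~\eqref{131001}.

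If Proposition~\ref{130804} is not available to cite, I would carry out its argument directly in the reflected setting, in four steps.
\begin{enumerate}
\item Check the sufficient conditions of Theorem~\ref{170305} at $u=u_-^r$, by Taylor expansion at $u_0$ with $f''_+(u_0)=f''_-(u_0)$. The condition $f'''_-(u_0)>0$ gives~\eqref{170308}. The sign of~\eqref{170307} to leading order in $\eps$ is governed by $f'''_+(u_0)-f'''_-(u_0)<0$.
\item Follow the resulting integral curve $\ell_\eps$ toward the degenerate stationary point $(u_0,0)$. For small $\eps$, the linearization there is a saddle-node: the double root produces a zero eigenvalue with a horizontal eigenvector. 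The curve therefore approaches the midline tangentially and stays in the lower half-strip.
\item Show that $\ell_\eps$ then turns back and reaches the lower boundary at some point $(v_-^l,-\eps)$. This point defines $v_-^l$.
\item Verify that along the whole curve $D_+>0$ and $D_->0$. This is done by comparison with the isoclines $X_{\pm1}$ and $X_0$, as in Section~\ref{141003}.
\end{enumerate}

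The main obstacle is the behaviour near the degenerate point $(u_0,0)$. One has to show that the curve touches the axis rather than crossing it, and that it stays in the region $D_\pm>0$ there. I would first handle this by a blow-up rescaling $x_1-u_0=\eps\xi$, $x_2=\eps\eta$. In these variables the field becomes, to leading order, a polynomial field depending only on $f'''_\pm(u_0)$, and the claim can be checked for that limiting field. Smallness of $\eps$ then transfers the conclusion to the actual field by continuity.
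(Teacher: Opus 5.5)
Your reflection argument is the paper's own route. The paper proves only Proposition~\ref{130804} and obtains Proposition~\ref{130806} from it by the symmetry $x_2\mapsto -x_2$ with $f_+\leftrightarrow f_-$, and your check of how the hypotheses, $D_+(u,\eps)\mapsto D_-(u,-\eps)$, the left orientation of the herringbone and the right-to-left change of the simple foliations transform is correct. One caution about your fallback outline only: under the double-root assumption the Jacobi matrix of field~(5.3) at $(u_0,0)$ vanishes identically, so this point is not a saddle-node, and the local picture (every integral curve except two separatrices tangent to the axis) has to be read off from the quadratic terms of the field, as the paper does in~\eqref{280201}.
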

\begin{figure}[h]
    \centering
    \includegraphics[scale = 0.10]{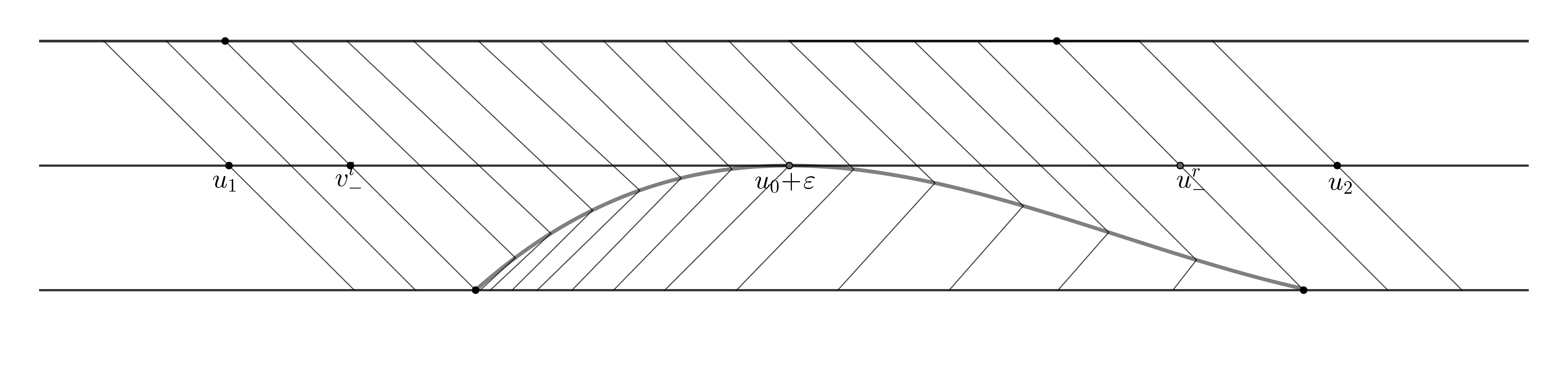}
    \caption{The foliation in Proposition~\ref{130806}.}
    \label{131002}
\end{figure}

\begin{Prop}
\label{130807}
If $f'''_-(u_0)<f'''_+(u_0)<0,$ then for small $\eps$\textup, from some point $(v_-^r\!-\eps,-\eps)$ on the lower boundary there extends a right herringbone that passes through the lower half of the strip\textup, is tangent to the axis $x_2=0$ at the point $(u_0-\eps,0)$\textup, and ends on the lower boundary at the point $(u_-^l\!-\eps,-\eps),$ where $u_-^l$ is the root of the equation $D_-(u,\eps)=0$ closest to $u_0$ on the left. Thus\textup, the following foliation can be constructed\textup:
\eq{131003}{
\Om{R}(u_1,v_-^l)\cup\Om{HB}\big((v_-^l\!\!+\!\eps,-\eps),(u_-^r\!\!+\!\eps,-\eps)\big)
\cup\Om{R}(u_-^r,u_2).
} 
\end{Prop}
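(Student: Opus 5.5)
The plan is to obtain Proposition~\ref{130807} from Proposition~\ref{130804} by symmetry rather than repeat the phase-plane analysis. I would compose the reflections with respect to both axes, i.e.\ the central symmetry $(x_1,x_2)\mapsto(-x_1,-x_2)$, with the swap of boundary data this induces.

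Concretely, put $\tilde f_\pm(t)=f_\mp(-t)$. The upper boundary goes to the lower one and vice versa, so the roles of $f_+$ and $f_-$ are exchanged, and the argument is reflected.

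First I check that the hypotheses transform correctly. Since $\tilde f'''_\pm(t)=-f'''_\mp(-t)$, the point $-u_0$ is a double root of $\tilde f'_+-\tilde f'_-$ for the new data. The condition $f'''_-(u_0)<f'''_+(u_0)<0$ becomes
$$
\tilde f'''_+(-u_0)=-f'''_-(u_0)>-f'''_+(u_0)=\tilde f'''_-(-u_0)>0,
$$
which is exactly the hypothesis of Proposition~\ref{130804}.

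Next I track how the auxiliary objects transform under this map:
\begin{itemize}
\item Left simple foliations $\Om{L}$ become right ones $\Om{R}$, and left herringbones become right ones.
\item The function $D_+$ at $(u,\eps)$ for the new data corresponds to $D_-$ at $(-u,\eps)$ for the old data, up to sign. This is the same correspondence that pairs Theorem~\ref{170301} with Theorem~\ref{170313}.
\item The root $u_+^r$ of $D_+(u,\eps)=0$ closest to $u_0$ on the right becomes the root $u_-^l$ of $D_-(u,\eps)=0$ closest to $u_0$ on the left.
\item The tangency point $(u_0+\eps,0)$ goes to $(u_0-\eps,0)$.
\end{itemize}
Applying Proposition~\ref{130804} to $\tilde f_\pm$ and mapping the resulting foliation back then gives a right herringbone in the lower half-strip. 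It is tangent to $x_2=0$ at $(u_0-\eps,0)$, ends at $(u_-^l-\eps,-\eps)$, and starts at some point $(v_-^r-\eps,-\eps)$, with right simple foliations on both sides.

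The step I expect to be the main obstacle is bookkeeping rather than analysis. I must verify that the sign conventions in $D_\pm$ and the definition of field~(5.3) really transform as claimed under the combined swap and reflection. In particular, the strict conditions of Theorem~\ref{170313} at the tip should be the images of those of Theorem~\ref{170301}:
\begin{itemize}
\item $f'''_-(u_-^l-\eps)<0$ holds for small $\eps$ because $f'''_-(u_0)<0$;
\item condition (\ref{170315}) reduces near $u_0$ to a sign of $f''_+-f''_-$, governed by $f'''_+(u_0)-f'''_-(u_0)>0$.
\end{itemize}

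Finally, the displayed formula~(\ref{131003}) in the statement appears to have copied the labels of~(\ref{131001}). I would state the resulting foliation as
$$
\Om{R}(u_1,u_-^l)\cup\Om{HB}\big((v_-^r-\eps,-\eps),(u_-^l-\eps,-\eps)\big)\cup\Om{R}(v_-^r,u_2),
$$
which is what the symmetry produces, and note the discrepancy.
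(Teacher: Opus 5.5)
Your proposal is correct and follows the paper's route: the paper proves only Proposition~\ref{130804} and obtains Proposition~\ref{130807} as its image under reflection in both axes ($x\mapsto -x$, $f_\pm(t)\mapsto f_\mp(-t)$). You are also right that~\eqref{131003} carries the labels of~\eqref{131001} and should read $\Om{R}(u_1,u_-^l)\cup\Om{HB}\big((v_-^r-\eps,-\eps),(u_-^l-\eps,-\eps)\big)\cup\Om{R}(v_-^r,u_2)$.

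There is one bookkeeping slip. The map $x\mapsto -x$ sends each family $x_1\pm x_2=\const$ to itself, so right simple foliations stay right; only the herringbone flips from left to right, because its ribs now point the other way. This is why your final formula correctly has $\Om{R}$ on both sides, and it contradicts your first bullet saying $\Om{L}$ becomes $\Om{R}$.
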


\begin{figure}[h]
    \centering
    \includegraphics[scale = 0.10]{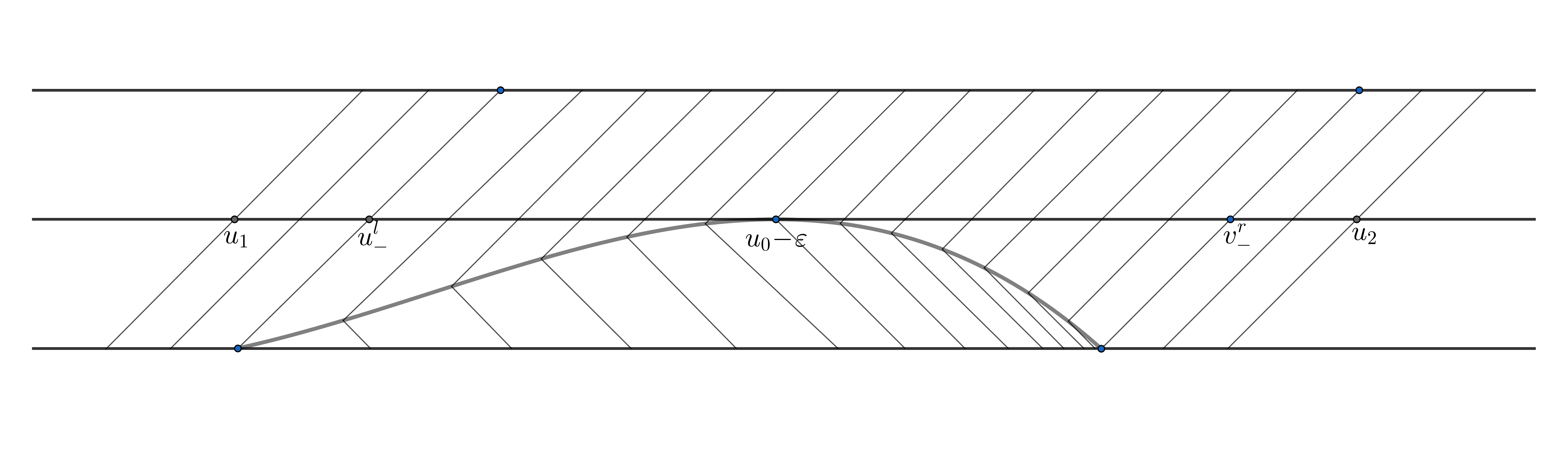}
    \caption{The foliation in Proposition~\ref{130807}.}
    \label{131004}
\end{figure}

Before proving the stated propositions, we prove the following lemma.

\begin{Le}
\label{220201}   
Let $f'_+(u_0)=f'_-(u_0),$ $f''_+(u_0)=f''_-(u_0),$ $f'''_+(u_0)>0$
and $f'''_+(u_0)>f'''_-(u_0)$. Then, for sufficiently small $\eps$\textup, there exists a root $u_+$ of the equation $D_+(u,\eps)=0$ such that one can construct a left herringbone extending from the point $(u_0+\eps,\,0)$ to the point $(u_+\!+\eps,\,\eps).$
\end{Le}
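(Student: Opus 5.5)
The plan is to reduce everything to the scale $\eps$ around $u_0$. In that scale the field~(5.3), the functions $D_\pm$ and the root $u_+$ all have explicit limits governed only by $a\df f'''_+(u_0)$ and $b\df f'''_-(u_0)$. I would then construct the spine for the limit problem and pass to small $\eps$ by perturbation.

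\emph{Step 1: locating $u_+$ and checking Theorem~\ref{170301}.} Put $u=u_0+\eps s$ and expand with Taylor's formula, using $f'_+(u_0)=f'_-(u_0)$ and $f''_+(u_0)=f''_-(u_0)$. A direct computation gives
$$
2\eps D_+(u_0+\eps s,\eps)=\tfrac{\eps^2}2\big[(a-b)(s^2-2s)-3a-b\big]+O(\eps^3).
$$
Since $1+\frac{3a+b}{a-b}=\frac{4a}{a-b}>0$, the bracket has the simple root $s_+=1+2\sqrt{a/(a-b)}>1$. By the implicit function theorem there is therefore a root $u_+=u_0+\eps s_++O(\eps^2)$ of $D_+(u,\eps)=0$. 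At this root
$$
f''_+(u_++\eps)-f''_-(u_+-\eps)-2\eps f'''_+(u_++\eps)=\eps(a-b)(s_+-1)+O(\eps^2)>0,
$$
and $f'''_+(u_++\eps)\to a>0$. So all the hypotheses of Theorem~\ref{170301} hold. Hence $(u_+,\eps)$ is a saddle of field~(5.3), and the integral curve $\ell_\eps$ leaving it with slope~\eqref{150304} starts into the domain $D_+>0$, $D_->0$.

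\emph{Step 2: the rescaled limit field.} Substitute $x_1=u_0+\eps\xi$, $x_2=\eps\eta$ and divide the field~(5.3) by the appropriate power of $\eps$; note that $(u_0,0)$ is a degenerate stationary point of~(5.3). As $\eps\to0$ the field tends to an explicit polynomial field $V(\xi,\eta)$ on the strip $|\eta|\le1$, whose coefficients are linear in $a,b$, with an analogous limit for $\eps^{-2}D_\pm$. For $V$ I would show the following: the stable/unstable separatrix $\ell$ of the saddle $(s_+,1)$ descends through the upper half-strip, has slope in $[0,1]$, keeps $D_\pm>0$, and tends to the stationary point $(0,0)$. To do this I would use the same isocline comparison as in Section~\ref{141003}. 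The isocline $X_1$ ($D_+=0$), the isocline $X_0$ and the axis $\eta=0$ bound a curvilinear sector. The separatrix enters this sector (its slope exceeds that of $X_1$ by Step~1), cannot leave it except at stationary points, and has no stationary point in it other than the origin. The fact that $D_->0$ along it follows from $D_-=D_++[f''_+-f''_-]$, as in the proof of Theorem~\ref{170301}, because $f''_+(x_1+\eps)-f''_-(x_1-\eps)\approx\eps\big((a-b)\xi+a+b\big)$ stays positive on the relevant range.

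\emph{Step 3: passing to small $\eps$; the main obstacle.} The main obstacle is that $(0,0)$ is a non-hyperbolic point of the limit field: the first-order terms of $f'_+-f'_-$ vanish there. So I cannot simply invoke continuous dependence of the separatrix on $\eps$ up to the endpoint. I would first compute the linearization (or the quadratic part) of $V$ at the origin and identify the direction along which $\ell$ arrives; I expect it to be tangent to the axis $\eta=0$, consistent with the tangency described in Proposition~\ref{130804}. Then I would argue by a trapping region rather than by perturbation at the endpoint. For small $\eps$, the sector from Step~2 bounded by $X_1$ (or $X_0$) and the midline is still positively invariant for $\ell_\eps$, and it shrinks to the point $(u_0,0)$. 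Hence $\ell_\eps$, which is $C^1$-close to $\ell$ away from the origin by smooth dependence on parameters, must end at $(u_0,0)$. It does so while staying in $D_\pm>0$ with monotone slope in $[0,1]$. Shifting $\ell_\eps$ to the right by $\eps$ gives the spine from $(u_0+\eps,0)$ to $(u_++\eps,\eps)$, i.e.\ the required left herringbone. If the trapping argument turns out to be awkward, the fallback is a direct blow-up of the degenerate point in polar coordinates to locate the invariant directions there.
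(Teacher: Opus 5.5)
Your proposal is essentially the paper's proof. Step~1 is the paper's expansion along the rays $x_1=u_0+(\alpha+1)x_2$: your $s$ is $\alpha+1$, and $s_+=1+\alpha_0$. What actually proves the lemma is the trapping argument in your Step~3, and the paper carries it out directly at fixed small $\eps$:
\begin{itemize}
\item $\ell_\eps$ enters the curvilinear triangle bounded by the branch of $D_+=0$ from $(u_0,0)$ to $(u_+,\eps)$, the midline, and the line $x_1=u_+$ (your arc of $X_0$ would serve equally well as the right-hand side);
\item it cannot cross $X_1$, because integral curves meet $X_1$ with slope $1$ while the branch has slope close to $1/(1+\alpha_0)<1$;
\item it moves leftward and downward, and since $u_0$ is the only root of $f'_+-f'_-$ nearby, it can only terminate at $(u_0,0)$.
\end{itemize}
The limit field $V$ and the $C^1$-closeness of $\ell_\eps$ to its separatrix add nothing. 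Near the degenerate vertex everything must be checked at each small $\eps$ anyway, and the paper does this with the homogeneous leading terms along rays, whose $O(x_2^3)$ remainders are uniformly dominated in the cone.

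One step is wrong as written. At interior points the identity is $D_-(x)=D_+(x)+f''_+(x_1+x_2)-f''_-(x_1-x_2)$. The arguments $x_1\pm\eps$ from the proof of Theorem~\ref{170301} are valid only on the boundary $x_2=\eps$.

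Your bracket $\eps\big((a-b)\xi+a+b\big)$ tends to $\eps(a+b)$ at the vertex. This is negative when $f'''_-(u_0)<-f'''_+(u_0)$, a case the hypotheses allow (it is relevant to Proposition~\ref{130802}), and it fails exactly where the spine ends.

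The correct bracket is $\eps\big((a-b)\xi+(a+b)\eta\big)+O\big(\eps^2(\xi^2+\eta^2)\big)$. In the sector $\xi>(1+\alpha_0)\eta>0$ its main term exceeds $\eps\,\xi\,\frac{2a+(a-b)\alpha_0}{1+\alpha_0}$ when $a+b<0$, and is at least $\eps(a-b)\xi$ otherwise. Either way it dominates the remainder, so $D_->D_+>0$ there. With this correction, your derivation of $D_->0$ is a slightly shorter alternative to the paper's direct expansion~\eqref{260201}.
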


\begin{proof}
Consider the behavior of the function $D_+$ in a neighborhood of the point $(u_0,0)$. To this end, write $x_1=u_0+(\alpha+1)x_2$ and expand the resulting function in powers of $x_2$, treating $\alpha$ as a fixed parameter:
\eq{230201}{
\begin{aligned}
2x_2 D_+(x_1,x_2)&=f'_+(u_0+(\alpha+2)x_2)-f'_-(u_0+\alpha x_2)
-2x_2 f''_+(u_0+(\alpha+2)x_2)
\\
&=\Big[\half\alpha^2\big(f'''_+(u_0)-f'''_-(u_0)\big)-2f'''_+(u_0)\Big]x_2^2
+O(x_2^3)\,.
\end{aligned}
}
We see that the isocline $X_1$ ($D_+=0$) has two branches in a neighborhood of $(u_0,0)$. They intersect at this point and have slopes
\eq{230202}{
\frac1{1\pm\alpha_0},\qquad \text{where}\quad 
\alpha_0=2\;\sqrt\frac{f'''_+(u_0)}{f'''_+(u_0)-f'''_-(u_0)}\,,
}
Thus, on the boundary $x_2=\eps$ the function $D_+$ has two roots in a neighborhood of $u_0$. Choose the larger one:
\eq{230203}{
u_+=u_0+(\alpha_0+1)\eps+O(\eps^2)\,,
}
and verify the conditions of Theorem~\ref{170301} for the point $u=u_+$. Condition~\eqref{170302} holds by the definition of $u_+$. Condition~\eqref{170304} holds for small $\eps$ since $f'''_+(u_0)>0$. Finally, condition~\eqref{170303} is the condition $\frac{\partial}{\partial x_1}D_+(u_+,\eps)>0$, which holds because the function $D_+(x_1,\eps)$ is negative to the left of the simple root $u_+$ and positive to the right of it.

It remains to verify that the resulting herringbone can be continued up to the point $(u_0+\eps,0)$. In other words, we need to trace that the integral curve $\ell_\eps$ defining the spine of the herringbone passes through the domain where $D_+>0$ and $D_->0$ and terminates at the point $(u_0,0)$. As before, we consider the subdomain $\omega$ in the upper half-strip bounded by the line $x_1=u_+$ and that part of the isocline $D_+=0$ which connects the points $(u_0,0)$ and $(u_+,\eps)$.

We shall essentially repeat the arguments used in the proof of Proposition~6.1 (see~\cite{First}). First, note that for small $\eps$ the slope of the chosen branch of the isocline is close to $1/(1+\alpha_0)$, while the integral curves have slope equal to $1$ at points of the isocline. Therefore, the curve $\ell_\eps$ in question, starting downward from the point $(u_+,\eps)$ to the right of the isocline, cannot intersect it at any point of the upper half-strip. Nor can the curve $\ell_\eps$ leave the domain $\omega$ through the line $x_2=0$ except through the stationary point $(u_0,0)$. Second, in $\omega$ we have $D_+>0$ and $D_->0$. The first inequality holds because, by construction, $\omega$ lies to the right of the chosen segment of the isocline $D_+=0$, on which the function $D_+$ changes sign from minus to plus when we intersect this isocline from left to right. We now verify the second inequality by examining the values of $D_-$ on the rays $x_1=u_0+(\alpha+1)x_2$:
\eq{260201}{
\begin{aligned}
2x_2 D_-(x_1,x_2)&=f'_+(u_0+(\alpha+2)x_2)-f'_-(u_0+\alpha x_2)
-2x_2 f''_-(u_0+\alpha x_2)
\\
&=\Big[(\half\alpha^2+2\alpha)\big(f'''_+(u_0)-f'''_-(u_0)\big)
+2f'''_+(u_0)\Big]x_2^2+O(x_2^3)\,.
\end{aligned}
}
Since $\alpha>0$ for points in $\omega$, this expression is strictly positive in $\omega$. Thus we have verified that the curve $\ell_\eps$ starting from $(u_+,\eps)$ moves leftward and downward inside the domain where $D_+>0$ and $D_->0$, eventually reaching the point $(u_0,0)$. Consequently, one can construct a left herringbone extending from the point $(u_0+\eps,0)$ to the point $(u_++\eps,\eps)$, as required.
\end{proof}

In order to be able to refer to the symmetric variants of the lemma just proved, we formulate them separately.

\begin{Le}
\label{260202}   
Let $f'_+(u_0)=f'_-(u_0),$ $f''_+(u_0)=f''_-(u_0),$ $f'''_-(u_0)>0,$
and $f'''_+(u_0)<f'''_-(u_0)$. Then\textup, for sufficiently small $\eps$\textup, there exists a root $u_-$ of the equation $D_-(u,-\eps)=0$ such that one can construct a left herringbone extending from the point $(u_0+\eps,\,0)$ to the point $(u_-\!+\eps,-\eps).$
\end{Le}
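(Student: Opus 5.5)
This is the mirror image of Lemma~\ref{220201} under the reflection $x_2\mapsto -x_2$, which interchanges $f_+$ and $f_-$ and $D_+$ and $D_-$. Rather than quote the symmetry, I will repeat that proof, with Theorem~\ref{170305} replacing Theorem~\ref{170301}.

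\emph{Local structure of $D_-$ near $(u_0,0)$.} In the lower half-strip I parametrize points as $x_1=u_0-(\alpha+1)x_2$ with $x_2<0$, so that $x_1-x_2=u_0-(\alpha+2)x_2$ and $x_1+x_2=u_0-\alpha x_2$. I then expand $2x_2D_-(x_1,x_2)$ to second order in $x_2$, using the double-root hypotheses. This should give a quadratic form $\bigl[\half\alpha^2(f'''_--f'''_+)(u_0)-2f'''_-(u_0)\bigr]x_2^2+O(x_2^3)$, the exact analogue of~\eqref{230201}. Since $f'''_-(u_0)>0$ and $f'''_-(u_0)-f'''_+(u_0)>0$, the isocline $D_-=0$ has two branches crossing at $(u_0,0)$, at $\alpha=\pm\alpha_0$ with $\alpha_0=2\sqrt{f'''_-(u_0)/(f'''_-(u_0)-f'''_+(u_0))}$. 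On the lower boundary $x_2=-\eps$ this yields two nearby roots, and I pick the larger one, $u_-=u_0+(\alpha_0+1)\eps+O(\eps^2)$.

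\emph{Tip conditions.} Next I check the conditions of Theorem~\ref{170305} at $u=u_-$.
\begin{enumerate}
\item Condition~\eqref{170306} holds by the choice of $u_-$.
\item Condition~\eqref{170308} holds for small $\eps$ by continuity, since $f'''_-(u_0)>0$.
\item Condition~\eqref{170307} says that $\partial_{x_1}D_-(u_-,-\eps)$ has the appropriate sign. It follows because $u_-$ is a simple root of $D_-(\cdot,-\eps)$: the leading coefficient in $\alpha$ is nonzero, and $D_-$ changes sign from negative to positive across it, moving rightward into the region $\alpha>\alpha_0$.
\end{enumerate}
I will double-check this sign against the expression $f''_+(u-\eps)-f''_-(u+\eps)+2\eps f'''_-(u+\eps)$, which equals $-2\eps\,\partial_{x_1}D_-$ up to a positive factor, so it should be negative. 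This guarantees a left herringbone tip at $(u_-+\eps,-\eps)$ whose spine-generating integral curve $\ell_\eps$ enters the strip upward to the right of $D_-=0$.

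\emph{Tracing the curve down to $(u_0,0)$.} This is the main step. I define $\omega$ as the region in the lower half-strip bounded by the line $x_1=u_-$, the axis $x_2=0$, and the branch of $D_-=0$ joining $(u_0,0)$ to $(u_-,-\eps)$.
\begin{itemize}
\item \emph{The curve stays to the right of the isocline.} On that branch the slope is close to $-1/(1+\alpha_0)$, while integral curves cross $D_-=0$ with slope $-1$. So $\ell_\eps$, leaving the tip to the right of the isocline, cannot recross it.
\item \emph{The curve cannot exit through the midline early.} It cannot leave $\omega$ through $x_2=0$ except at the stationary point $(u_0,0)$.
\item \emph{$D_->0$ in $\omega$.} This holds because $\omega$ lies on the positive side of the branch.
\item \emph{$D_+>0$ in $\omega$.} I expand $2x_2D_+$ along the same rays. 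I expect $\bigl[(\half\alpha^2+2\alpha)(f'''_--f'''_+)(u_0)+2f'''_-(u_0)\bigr]x_2^2$, which is positive for $\alpha>0$, and all of $\omega$ near $(u_0,0)$ has $\alpha>0$.
\end{itemize}

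The main obstacle is keeping the sign conventions straight under the reflection. In particular, $2x_2$ is now negative, so positivity of the bracket must be translated carefully into the sign of $D_\pm$; this is the point where I would recompute rather than copy. Once that is settled, the spine runs from $(u_-+\eps,-\eps)$ to $(u_0+\eps,0)$ inside $\{D_+>0,\,D_->0\}$, which gives the required left herringbone.
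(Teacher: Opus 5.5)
Your argument is correct and follows the paper's route: the paper states this lemma without proof as the mirror image of Lemma~\ref{220201} under $x_2\mapsto-x_2$ (which swaps $f_\pm$ and $D_\pm$), and you carry that proof through the reflection with Theorem~\ref{170305} in place of Theorem~\ref{170301}. The one slip is the overall sign of your two displayed expansions, which are the coefficients of $-2x_2D_-$ and $-2x_2D_+$; the correct ones are $2x_2D_-=\bigl[\half\alpha^2(f'''_+-f'''_-)(u_0)+2f'''_-(u_0)\bigr]x_2^2+O(x_2^3)$ and $2x_2D_+=\bigl[(\half\alpha^2+2\alpha)(f'''_+-f'''_-)(u_0)-2f'''_-(u_0)\bigr]x_2^2+O(x_2^3)$, and since $-2x_2>0$ your brackets' positivity still gives the conclusions you drew, $D_->0$ for $\alpha>\alpha_0$ and $D_+>0$ for $\alpha>0$.
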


\begin{Le}
\label{260203}   
Let $f'_+(u_0)=f'_-(u_0),$ $f''_+(u_0)=f''_-(u_0),$ $f'''_+(u_0)<0,$
and $f'''_+(u_0)<f'''_-(u_0)$. Then\textup, for sufficiently small $\eps$\textup, there exists a root $u_-$ of the equation $D_+(u,-\eps)=0$ such that one can construct a right herringbone extending from the point $(u_0-\eps,\,0)$ to the point $(u_-\!-\eps,\,\eps).$
\end{Le}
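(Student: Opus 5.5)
The plan is to derive Lemma~\ref{260203} from Lemma~\ref{220201} by the reflection $x_1\mapsto -x_1$. The alternative is to repeat that lemma's proof with Theorem~\ref{170309} in place of Theorem~\ref{170301}.

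Put $\tilde f_\pm(t)=f_\pm(-t)$ and $\tilde u_0=-u_0$. Then
\[
\tilde f'_\pm(\tilde u_0)=-f'_\pm(u_0),\qquad
\tilde f''_\pm(\tilde u_0)=f''_\pm(u_0),\qquad
\tilde f'''_\pm(\tilde u_0)=-f'''_\pm(u_0).
\]
So the hypotheses of Lemma~\ref{260203}, namely $f'''_+(u_0)<0$ and $f'''_+(u_0)<f'''_-(u_0)$, become exactly those of Lemma~\ref{220201}: $\tilde f'''_+(\tilde u_0)>0$ and $\tilde f'''_+(\tilde u_0)>\tilde f'''_-(\tilde u_0)$. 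The double root condition is also preserved. Reflection in $x_1$ takes left foliations to right ones and left herringbones to right herringbones. The only point to check is the transformation of $D_+$: $2x_2\tilde D_+(x_1,x_2)$ should equal $\pm 2x_2D_+(-x_1,\cdot)$ up to the orientation convention that turns $D_+(u,\eps)$ into $D_+(u,-\eps)$, which is how Theorems~\ref{170301} and~\ref{170309} are related. Lemma~\ref{220201} applied to $\tilde f_\pm$ then gives a root $\tilde u_+$ of $\tilde D_+(\cdot,\eps)=0$ and a left herringbone from $(\tilde u_0+\eps,0)$ to $(\tilde u_++\eps,\eps)$. Reflecting back, $u_-=-\tilde u_+$ is a root of $D_+(u,-\eps)=0$, and we get a right herringbone from $(u_0-\eps,0)$ to $(u_--\eps,\eps)$.

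If the reflection bookkeeping proves awkward, I will run the direct argument instead. Substitute $x_1=u_0-(\alpha+1)x_2$ into $D_+(x_1,-x_2)$ and expand to second order. This yields two branches of the relevant isocline through $(u_0,0)$, with slopes $-1/(1\pm\alpha_0)$, where $\alpha_0=2\sqrt{f'''_+(u_0)/(f'''_+(u_0)-f'''_-(u_0))}$. The radicand is positive because both numerator and denominator are negative. Take the smaller root $u_-=u_0-(\alpha_0+1)\eps+O(\eps^2)$ and check Theorem~\ref{170309}. Condition (i) holds by construction, and condition (iii) holds because $f'''_+(u_-- \eps)<0$ for small $\eps$. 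Condition (ii) says that $D_+(\cdot,-\eps)$ changes sign at the simple root $u_-$ in the correct direction.

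Next I trace the integral curve from $(u_-,\eps)$ to $(u_0,0)$ inside the curvilinear triangle $\omega$ bounded by $x_1=u_-$ and the chosen isocline branch. I will repeat the slope comparison from Lemma~\ref{220201} to show that the curve cannot cross the isocline. The positivity of the other function $D$ in $\omega$ comes from the analogue of \eqref{260201}: its leading coefficient
\[
(\tfrac12\alpha^2+2\alpha)\big(f'''_-(u_0)-f'''_+(u_0)\big)-2f'''_+(u_0)
\]
(up to sign conventions) is positive for $\alpha>0$.

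The main obstacle is the sign and orientation bookkeeping: confirming which of $D_\pm$ and which argument $\pm\eps$ the reflection produces. I will check this against the statement of Theorem~\ref{170309}, which has the same form, and its relation to Theorem~\ref{170301}.
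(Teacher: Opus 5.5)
Your reflection argument is correct and is exactly how the paper obtains this lemma: it is stated without proof as the mirror image of Lemma~\ref{220201} under $x_1\mapsto-x_1$, just as Theorem~\ref{170309} mirrors Theorem~\ref{170301}. The bookkeeping step you left open closes via the identity $\tilde D_+(x_1,x_2)=D_+(-x_1,-x_2)$, which follows at once from $2x_2D_+=f'_+(x_1+x_2)-f'_-(x_1-x_2)-2x_2f''_+(x_1+x_2)$ and gives $D_+(-\tilde u_+,-\eps)=0$, so $u_-=-\tilde u_+$ is the required root.
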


\begin{Le}
\label{260204}   
Let $f'_+(u_0)=f'_-(u_0),$ $f''_+(u_0)=f''_-(u_0),$ $f'''_-(u_0)<0,$
and $f'''_+(u_0)>f'''_-(u_0)$. Then\textup, for sufficiently small $\eps$\textup, there exists a root $u_+$ of the equation $D_-(u,\eps)=0$ such that one can construct a right herringbone extending from the point $(u_0-\eps,\,0)$ to the point $(u_+\!-\eps,-\eps).$
\end{Le}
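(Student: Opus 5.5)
The plan is to repeat the proof of Lemma~\ref{220201} in mirrored form, with Theorem~\ref{170313} playing the role that Theorem~\ref{170301} played there. Throughout, write $a=f'''_+(u_0)$ and $b=f'''_-(u_0)$, so that the hypotheses read $b<0$ and $a-b>0$.

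\emph{Step 1: local structure of $D_-=0$ near $(u_0,0)$.} In the upper half-strip I parametrize by the rays $x_1=u_0-(\alpha+1)x_2$. On such a ray $x_1+x_2=u_0-\alpha x_2$ and $x_1-x_2=u_0-(\alpha+2)x_2$. I expand $2x_2D_-$ using the double-root conditions $f'_+(u_0)=f'_-(u_0)$ and $f''_+(u_0)=f''_-(u_0)$:
$$
2x_2D_-(x_1,x_2)=\Big[\half\alpha^2(a-b)+2b\Big]x_2^2+O(x_2^3).
$$
Since $b<0<a-b$, the isocline $X_{-1}$ has two branches through $(u_0,0)$, corresponding to $\alpha=\pm\alpha_0$ with
$$
\alpha_0=2\sqrt{\frac{-f'''_-(u_0)}{f'''_+(u_0)-f'''_-(u_0)}}\,.
$$
I take the leftmost root of $D_-(u,\eps)=0$, namely $u_+=u_0-(\alpha_0+1)\eps+O(\eps^2)$; this is a simple root.

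\emph{Step 2: the tip at $(u_+-\eps,-\eps)$ via Theorem~\ref{170313}.} Condition~\eqref{170314} holds by the choice of $u_+$. Condition~\eqref{170316} holds for small $\eps$ because $f'''_-(u_+-\eps)\to b<0$. Condition~\eqref{170315} says $\partial D_-/\partial x_1(u_+,\eps)<0$. This follows because the bracket above is positive for $\alpha>\alpha_0$, i.e.\ to the left of $u_+$, and negative just to the right, so $D_-$ changes sign from plus to minus across the simple root.

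\emph{Step 3: the spine reaches $(u_0,0)$ inside $\{D_+>0,\ D_->0\}$.} I consider the region $\omega$ in the upper half-strip bounded by the line $x_1=u_+$ and the arc of $X_{-1}$ joining $(u_0,0)$ to $(u_+,\eps)$. This is the mirror image of the region used in Lemma~\ref{220201} and Proposition~6.1. On this arc the integral curves of the (reflected) field have slope $-1$, while the arc itself has slope close to $-1/(1+\alpha_0)$. Hence the separatrix issuing from the saddle $(u_+,\eps)$ into $\omega$ cannot cross the arc in the upper half-strip. It also cannot leave $\omega$ through $x_2=0$ except at the stationary point $(u_0,0)$, so it must terminate there.

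Inside $\omega$ we have $D_->0$ by construction. For $D_+$, the same ray expansion gives
$$
2x_2D_+=\Big[(\half\alpha^2+2\alpha)(a-b)-2b\Big]x_2^2+O(x_2^3),
$$
which is strictly positive for $\alpha>0$, and hence throughout $\omega$. Shifting this integral curve left by $\eps$ gives the spine of the right herringbone from $(u_0-\eps,0)$ to $(u_+-\eps,-\eps)$.

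\emph{Main obstacle.} I expect the hardest step to be the bookkeeping of the reflection: which of the fields (5.3) or its mirror generates right herringbones, and the matching orientation and slope signs. To avoid errors I would first try deriving the statement from Lemma~\ref{220201} by the symmetry $x_2\mapsto-x_2$ composed with $x_1\mapsto-x_1$, checking how $f_\pm$ and their derivatives transform. Only if that becomes murky would I fall back on the direct computation above.
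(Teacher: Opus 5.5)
Your proposal is correct and follows the paper's intended argument. The paper gives no separate proof: Lemma~\ref{260204} is the image of Lemma~\ref{220201} under the central symmetry $(x_1,x_2)\mapsto(-x_1,-x_2)$, which replaces $f_\pm(t)$ by $f_\mp(-t)$, and your ray expansions of $2x_2D_\mp$, the choice $u_+=u_0-(\alpha_0+1)\eps+O(\eps^2)$, and the check of Theorem~\ref{170313} are the correctly mirrored steps of that proof.

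One bookkeeping slip: your integral curve runs in the upper half-strip from $(u_+,\eps)$ to $(u_0,0)$, so the spine is its image under $(x_1,x_2)\mapsto(x_1-\eps,-x_2)$, i.e.\ reflection followed by the left shift. The left shift alone would put the tip at $(u_+-\eps,\eps)$, on the upper boundary.
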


We are now ready to prove Propositions~\ref{130802}--\ref{130807}.
Since Propositions~\ref{130802} and~\ref{130803} are symmetric,
we give the proof only of the first one.

\begin{proof}[Proof of Proposition~\textup{\ref{130802}}]
Since $f'''_+(u_0)>0$ and $f'''_-(u_0)<0$, the conditions of Lemmas~\ref{220201} and~\ref{260204} are satisfied simultaneously. Thus we can immediately construct two herringbones: a left one from the point $(u_0+\eps,\,0)$ and a right one from the point $(u_0-\eps,\,0)$. The proof is completed by applying Proposition~11.1 (see~\cite{Second}), whose conditions are satisfied with $C=(u_0,0)$. Namely, the two herringbones constructed can be joined by a rectangle, which in this case is a square.
\end{proof}

We now prove one of the symmetric Propositions~\ref{130804}--\ref{130807}.

\begin{proof}[Proof of Proposition~\textup{\ref{130804}}]
As in the previous proof, using Lemma~\ref{220201}, we construct a left herringbone from the point $(u_0+\eps,\,0)$ to the point $(u_+\!+\eps,\,\eps)$. Now we need to continue it to the left up to the second intersection with the upper boundary. In other words, we need to continue the curve $\ell_\eps$ to the left from the point $(u_0,0)$ so that it runs in the upper half-strip up to the upper boundary, remaining in the domain where $D_+>0$ and $D_->0$. To this end, we need to study in more detail the behavior of the isoclines of the integral curves in a neighborhood of $(u_0,0)$.

Substituting the expansion
\eq{270201}{
x_2=k(x_1-u_0)+O(x_2^2)
}
into the equations of our field \textup{\bf(}e.\,g., into formula~(5.3)\textup{\bf)}, we obtain the following equation for the slope coefficient $k$:
\eq{280201}{
k=\frac{2k}{1-k^2}\cdot
\frac{(1+k)f'''_+(u_0)-(1-k)f'''_-(u_0)}{f'''_+(u_0)-f'''_-(u_0)}\,.
}
All integral curves except two correspond to the value $k=0$, i.\,e., they are tangent to the axis $x_1$. The two distinguished integral curves, which are separatrices, have slopes
\eq{280202}{
k_\pm=-\frac{\sqrt{f'''_+(u_0)}\pm\sqrt{f'''_-(u_0)}}
{\sqrt{f'''_+(u_0)}\mp\sqrt{f'''_-(u_0)}}\,.
}
The sector between the separatrices is foliated by integral curves that do not pass through $(u_0,0)$. The behavior of the extremal curves in a neighborhood of this point is shown in Figure~\ref{280203}.
\begin{figure}[h]
    \centering
    \includegraphics[scale = 0.3]{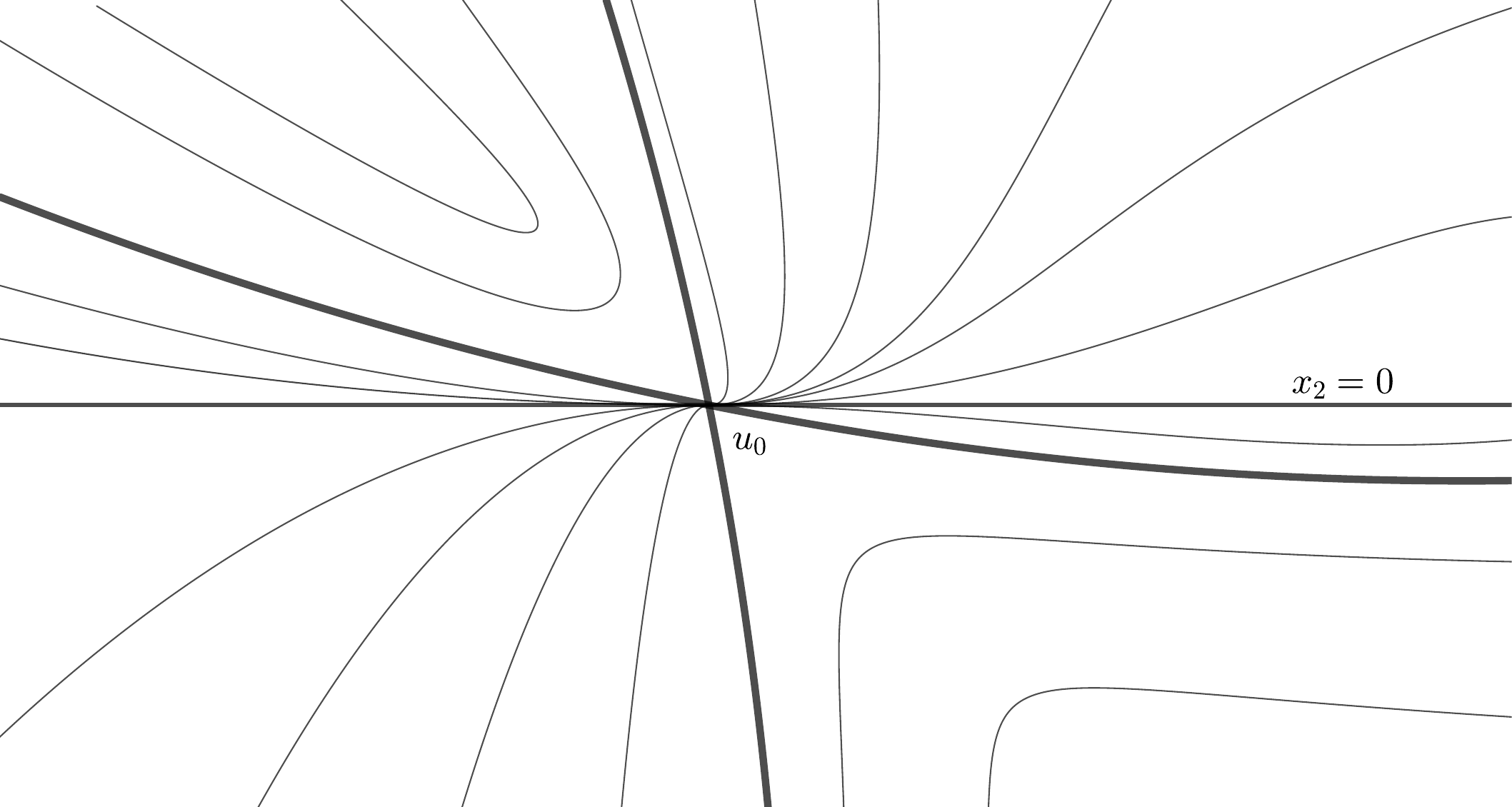}
    \caption{Integral curves of the field in a neighborhood of the stationary 
    point $(u_0,0)$.}
    \label{280203}
\end{figure}

Thus, our integral curve $\ell_\eps$ is tangent to the axis $x_1$ from the right at the point $(u_0,0)$, and we can continue it by any integral curve tangent to the axis $x_1$ at $(u_0,0)$ from the left in order to obtain a $C^1$-smooth curve. This curve suplies us with the spine of a herringbone, provided that it passes through the strip in the domain where $D_+>0$ and $D_->0$. We need not worry about the inequality $D_->0$ at all, since under the conditions stated in Proposition~\ref{130804} it holds in some neighborhood of $(u_0,0)$ in the upper half-plane. To verify this, it suffices to rewrite equality~\eqref{260201} in the form
\eq{010301}{
2x_2 D_-(x_1,x_2)=\Big[\half(\alpha+2)^2\big(f'''_+(u_0)-f'''_-(u_0)\big)
+2f'''_-(u_0)\Big]x_2^2+O(x_2^3)\,.
}
The fact that for sufficiently small $\eps$ we can always find an integral curve tangent to the axis $x_1$ at $(u_0,0)$ from the left on which the condition $D_+>0$ holds follows simply by choosing the initial point $v_+^l$ of this curve on the boundary $x_2=\eps$ to the left of the left root of the equation $D_+(u,\eps)=0$.
\end{proof}

Note that we have proved the proposition by showing the possibility of constructing a medium pocket in the described situation, but we have not fully determined the integral curve defining the spine of the herringbone. The final choice of this curve requires additional investigation, which we shall not carry out here. We only put forward a conjecture: the natural candidate for the continuation of the curve $\ell_\eps$ from the point $(u_0,0)$ to the left is the curve that joins at $(u_0,0)$ in a $C^2$-smooth manner.

On the other hand, since the simple foliation gives the minimal possible value of the diagonally concave function we are looking for, it is desirable to take the point $v_+^l$ as far to the right as possible. The maximum value is attained on the separatrix, but we cannot take it for constructing the spine of the herringbone because it has negative slope at $(u_0,0)$, i.\,e., it does not join the curve $\ell_\eps$ in a $C^1$-smooth manner, which is necessary for constructing the foliation.

\section{Examples. Third-degree polynomials: degenerate cases}
\label{141000}

In this section we complete the study of foliations arising in the case where the boundary values are third-degree polynomials. Recall that, in order to avoid considering cases obtained by symmetry from those already treated, we assume that
\eq{141001}{
a_3^+\ge |a_3^-|.
}
The nondegenerate case is defined as the one where the discriminant of the polynomial $f'_+-f'_-$ is nonzero and $a_3^-\ne0$. We add that we always have $a_3^+>0$, since otherwise condition~\eqref{141001} would imply $a_3^+=a_3^-=0$, i.\,e., the boundary polynomials would have degree less than three, which is not the subject of our consideration.

The nondegenerate cases were fully treated in Sections~7, 9, 10, 12, and~\ref{141003}. Moreover, the degenerate cases have already been partially treated as well. In a certain sense, the case where the function $f'_+-f'_-$ is linear, i.\,e., $a_3^+=a_3^-$, can be regarded as degenerate. This case was studied in Section~7 (see~\cite{First}). Furthermore, in Section~12 (see~\cite{Second}) the case where the discriminant of the quadratic polynomial $f'_+-f'_-$ vanishes and $a_3^-<0$ was treated. When the discriminant of the quadratic polynomial $f'_+-f'_-$ is negative, the case $a_3^-=0$ was not distinguished in any way, and Section~9 gives a description of the foliation common to all nonnegative $a_3^-$. Thus, we are left with three degenerate cases:
\begin{itemize}
\item the discriminant of the quadratic polynomial $f'_+-f'_-$ is positive and $a_3^-=0$;
\item the discriminant of the quadratic polynomial $f'_+-f'_-$ vanishes and $a_3^->0$;
\item the discriminant of the quadratic polynomial $f'_+-f'_-$ vanishes and $a_3^-=0$.
\end{itemize}

Let us begin with the first case, where the polynomial $f'_+-f'_-$ has two distinct real roots. Then, for a nonzero coefficient $a_3^-$, two fissures appear in a neighborhood of these roots for small $\eps$ (see Figs.~14 and~15 in~\cite{First}). On the right there was always an SW-fissure, while the direction of the fissure on the left depended on the sign of $a_3^-$: in the case $a_3^->0$ we used Proposition~6.3 and constructed an NW-fissure, whereas for $a_3^-<0$ we used Proposition~6.4 and constructed an NE-fissure. Now, for $a_3^-=0$, we cannot construct any fissure to connect the domains of right and left simple foliations. However, since the boundary function on the lower boundary is a quadratic polynomial, we can connect these domains by a triangle of bilinearity $\Om{T}(u_{01})$,
\eq{151001}{
\Om{T}(u)\df\{x\in\Oe\colon u+x_2-\eps\le x_1\le u-x_2+\eps\}.
}

Thus, let us verify that for small $\eps$ we have the foliation
\eq{161001}{
\begin{aligned}
\Oe=&\;
\Om{R}(-\infty,u_{01}-\eps)\cup\Om{T}(u_{01})\cup\Om{L}
(u_{01}+\eps,v_{-2})
\\
\quad&
\cup\Om{HB}\big((v_{-2}\!+\eps,-\eps),(u_{+2}\!+\eps,\eps)\big)
\cup\Om{R}(u_{+2},\infty),
\end{aligned}
}
which is shown in Figure~\ref{241002}.
\begin{figure}[h]
    \centering
    \includegraphics[scale = 0.35]{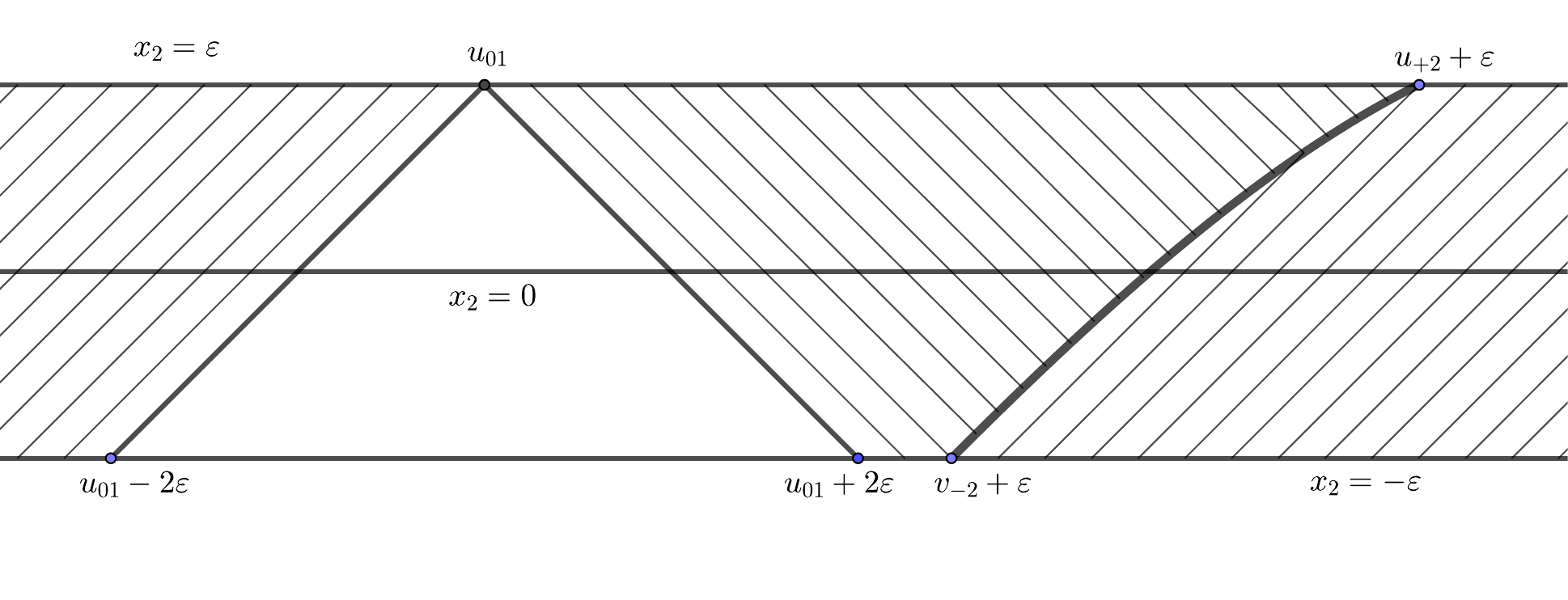}
    \caption{The foliation~\eqref{161001}.}
    \label{241002}
\end{figure}

Note that in our special notation for fissures introduced in Section~6 (see~\cite{First}),
$$
\Om{HB}\big((v_{-2}\!+\eps,-\eps),(u_{+2}\!+\eps,\eps)\big)=
\Om{SW}(v_{-2},u_{+2}),
$$
but we have used the general notation for horizontal herringbones because, as we shall see, in the course of evolution (i.\,e., as $\eps$ grows) the base of this herringbone will detach from the lower boundary and it will cease to be a fissure.

To prove that the described foliation corresponds to the given boundary values, we need to verify that the inequalities $D_+\ge0$ and $D_-\ge0$ hold on the upper boundary of the strip for $x_1\le u_{01}-\eps$ and $x_1\ge u_{+2}$, as well as on the lower boundary for $u_{01}+\eps\le x_1\le v_{-2}$. One can visually verify these inequalities by looking at Figure~\ref{161002}, which shows the components of the isoclines $X_1$ and $X_{-1}$ that are the graphs of the curves $D_+=0$ and $D_-=0$. (Recall that for the field giving a right herringbone, the upper boundary of the strip always belongs to the isocline $X_{-1}$, while the lower boundary belongs to $X_1$.)
\begin{figure}[h]
    \centering
    \includegraphics[scale = 0.4]{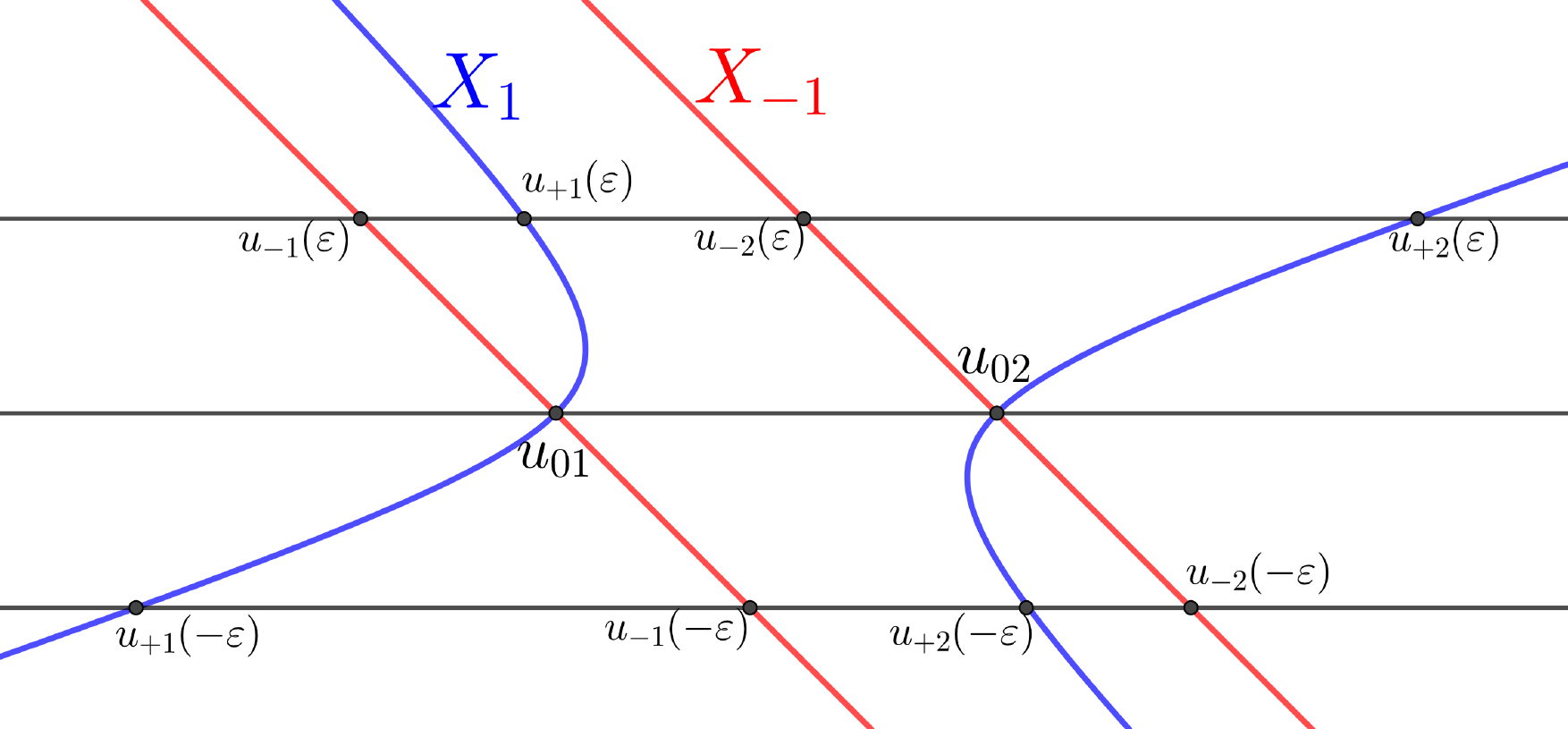}
    \caption{The graphs of the curves $D_+=0$ and $D_-=0$.}
    \label{161002}
\end{figure}

Since $D_+$ and $D_-$ are quadratic polynomials in the variable $x_1$, to determine their sign it suffices to compute the roots of these polynomials. We shall use formulas~(9.3)--(9.4) from \cite{Second}, where the expressions for $D_+$ and $D_-$ for third-degree polynomials were computed. Since $a_3^-=0$, we have
\eq{290301}{
2x_2D_-(x)=3a_3^+(x_1+x_2-u_0)^2-\frac{d^2}{3a_3^+}=
3a_3^+(x_1+x_2-u_{01})(x_1+x_2-u_{02})\,,
}
where
\eq{290302}{
\begin{gathered}
    u_0=-\frac{a_2^+-a_2^-}{3a_3^+}\,,\qquad d=\sqrt{(a_2^+-a_2^-)^2-3a_3^+(a_1^+-a_1^-)}\,,
    \\
    u_{01}=u_0-\frac{d}{3a_3^+}\,,\qquad u_{02}=u_0+\frac{d}{3a_3^+}\,.
\end{gathered}
}
Therefore, the isocline $D_-=0$ splits into two straight lines: $x_1+x_2=u_{01}$ and $x_1+x_2=u_{02}$. We need the roots on the upper boundary:
\eq{211001}{
u_{-1}(\eps)=u_0-\frac{d}{3a_3^+}-\eps;
\qquad
u_{-2}(\eps)=u_0+\frac{d}{3a_3^+}-\eps;
}
and on the lower boundary:
\eq{211002}{
u_{-1}(-\eps)=u_0-\frac{d}{3a_3^+}+\eps;
\qquad
u_{-2}(-\eps)=u_0+\frac{d}{3a_3^+}+\eps;
}

The isocline $D_+=0$ is a hyperbola:
\eq{290303}{
2x_2D_+(x)=3a_3^+(x_1-x_2-u_0)^2-\frac{d^2+36(a_3^+x_2)^2}{3a_3^+}\,.
}
Accordingly, the roots of the function $D_+$ on the upper boundary are given by
\eq{211003}{
u_{+1}(\eps)=u_0-\frac{\sqrt{d^2+(6a_3^+\eps)^2}}{3a_3^+}+\eps\,;
\qquad
u_{+2}(\eps)=u_0+\frac{\sqrt{d^2+(6a_3^+\eps)^2}}{3a_3^+}+\eps\,;
}
and on the lower boundary:
\eq{211004}{
u_{+1}(-\eps)=u_0-\frac{\sqrt{d^2+(6a_3^+\eps)^2}}{3a_3^+}-\eps\,;
\qquad
u_{+2}(-\eps)=u_0+\frac{\sqrt{d^2+(6a_3^+\eps)^2}}{3a_3^+}-\eps\,.
}

The obvious inequality
$$
d+6a_3^+\eps\ge\sqrt{d^2+(6a_3^+\eps)^2}
$$
implies the following relations:
\eq{211005}{
\begin{aligned}
u_{-1}(\eps)\le u_{+1}(\eps)\qquad&\text{and}\qquad u_{-2}(\eps)\le u_{+2}(\eps);
\\
u_{+1}(-\eps)\le u_{-1}(-\eps)\qquad&\text{and}\qquad u_{+2}(-\eps)\le u_{-2}(-\eps).
\end{aligned}
}

Since the leading coefficient of the polynomials $D_+(u,\eps)$ and $D_-(u,\eps)$ is positive, the inequalities $D_+(u,\eps)>0$ and $D_-(u,\eps)>0$ hold outside the roots of the polynomials, i.\,e., for
\eq{211006}{
u<u_{-1}(\eps)\qquad\text{and}\qquad u>u_{+2}(\eps),
}
while the inequalities $D_+(u,-\eps)>0$ and $D_-(u,-\eps)>0$ hold on the interval between the roots, since the leading coefficient is negative:
\eq{211007}{
u_{-1}(-\eps)<u<u_{+2}(-\eps).
}
This means that we can construct domains with simple foliations
$$
\Om{R}(-\infty,u_{-1}(\eps)),\quad
\Om{L}(u_{-1}(-\eps),u_{+2}(-\eps)),
\quad\text{and}\quad\Om{R}(u_{+2}(\eps),\infty).
$$
To connect the first two domains, we place between them a triangle of bilinearity
$$
\Om{T}(u_{01}),
$$
where the Bellman function is determined by the value on the upper boundary at the point $u_{01}=u_{-1}(\eps)+\eps=u_{-1}(-\eps)-\eps$ and by the quadratic polynomial on the lower boundary according to the formula
\eq{231001}{
\Bell(x)=(x_2+\eps)\frac{f_+(u_{01})-f_-(u_{01})}{2\eps}+f_-
(x_1)+a_2^-(\eps^2-x_2^2).
}
Between the other two domains we insert a fissure
$$
\Om{SW}(v_{-2},u_{+2}),
$$
for which we need to narrow the domain $\Om{L}(u_{-1}(-\eps),u_{+2}(-\eps))$ to $\Om{L}(u_{-1}(-\eps),v_{-2})$. This can be done since we always have $v_{-2}<u_{02}-\eps<u_{+2}(-\eps)$. The resulting foliation~\eqref{161001} is shown in Figure~\ref{241002}.

It should be noted that all these arguments are valid for small $\eps$, more precisely for $\eps\le\eps_1$, where $\eps_1$ is the root of the equation $u_{-1}(-\eps)=v_{-2}(\eps)$, i.\,e., $\eps_1$ is the value of the parameter $\eps$ at which the domain $\Om{L}(u_{-1}(-\eps),v_{-2})$ shrinks to a single segment.

It is not hard to understand what happens for larger $\eps$. The horizontal herringbone will no longer grow from a point on the lower boundary. To find the point from which it can grow, one must consider the integral curve $\ell_\eps$ of our field emerging from the point $(u_{+2},\eps)$, shift it to the right by $\eps$ (obtaining a possible spine of a left herringbone), and find the point $C$ at which this curve intersects the right side of the triangle $\Om{T}(u_{01})$. Such a point $C$ always exists for $\eps>\eps_1$, and its coordinate $C_2=C_2(\eps)$, remaining negative, increases monotonically from the value $C_2(\eps_1)=-\eps_1$.

These assertions are easier to verify if, instead of shifting the integral curve $\ell_\eps$ to the right, we shift the triangle $\Om{T}(u_{01})$ to the left. The advantage of considering this shift is that the line containing the right side of the shifted triangle no longer depends on $\eps$ — it is the line passing through the point $(u_{01},0)$. This is not merely a line independent of $\eps$; it is the line on which $D_-=0$, i.\,e., it bounds the domain where $D_->0$. If we want to make a spine of a herringbone from the integral curve $\ell_\eps$, we cannot continue it beyond this boundary. Note also that at this point the slope of the curve $\ell_\eps$ is equal to one.

Since in the lower half of the strip the curves $\ell_\eps$ increase monotonically with $\eps$, the vertical coordinate of the intersection point of the curve $\ell_\eps$ with the right side of the shifted triangle also increases. Moreover, it becomes obvious that this intersection point remains in the lower half of the strip for all $\eps$. Indeed, the part of the shifted triangle lying in the upper half of the strip is located to the left of the point $(u_{01},0)$, while the upper half of the curve $\ell_\eps$ lies to the right of the point $(u_{02},0)$, where the curve $\ell_\eps$ intersects the midline of the strip. Therefore, there can be no intersection points in the upper half of the strip. To obtain the desired point $C$, we need to shift the resulting intersection point to the right by $\eps$, i.\,e., the coordinates of $C$ satisfy $C_1+C_2=u_{01}+\eps$.

Thus, the last lower rib of the herringbone (the extremal segment going from the point $C$ to the lower boundary) continues the spine of the herringbone in a $C^1$-smooth manner, since we know that at the lower point (the point $C$) the spine has slope $1$. It cuts off from the triangle $\Om{T}(u_{01})$ a triangle near the right vertex; the remaining domain of bilinearity is a right trapezoid (see Fig.~\ref{241004}).
\begin{figure}[h]
    \centering
    \includegraphics[scale = 0.35]{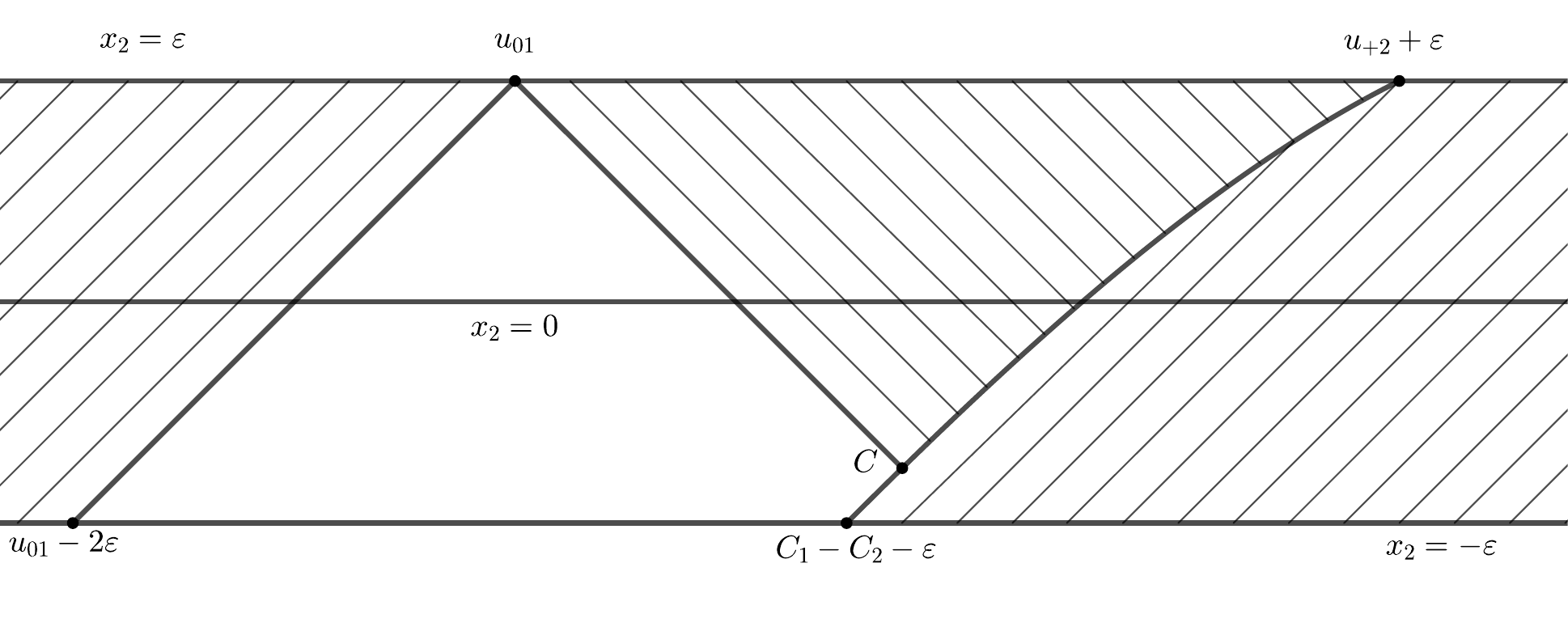}
    \caption{The foliation~\ref{241003}.}
    \label{241004}
\end{figure}
As a result, for $\eps>\eps_1$ we obtain the foliation
\eq{241003}{
\Oe=\Om{R}(-\infty,u_{01}-\eps)\cup\Om{Tz}(u_{01},C)
\cup\Om{HB}\big(C,(u_{+2}\!+\eps,\eps)\big)
\cup\Om{R}(u_{+2},\infty),
}
where
\eq{241005}{
\Om{Tz}(u_{01},C)\df\{x\in\Oe\colon u_{01}-\eps\le x_1-x_2\le C_1-C_2,\;x_1+x_2\le C_1+C_2\},
}
and the function $\Bell$ in this domain is given by formula~\eqref{231001}.
The complete foliation is shown in Figure~\ref{241004}.

Now let us turn to the case where $d=0$ and $a_3^->0$. The local foliation~\eqref{170802a} for small $\eps$ is given by Proposition~\ref{130804}. It is shown in Figure~\ref{170801}.
We shall now show that in fact this foliation extends to the entire strip (i.\,e., $u_1=-\infty$ and $u_2=+\infty$) and remains the same for all $\eps$:
\eq{010302}{
\Oe=\Om{R}(-\infty,v_+^l)\cup\Om{HB}\big((v_+^l\!+\!\eps,\eps),(u_+^r\!\!+\!\eps,\eps)\big)
\cup\Om{R}(u_+^r,+\infty).
}

The fact that the function $2x_2D_+$ is a quadratic polynomial in the first variable guarantees the existence of only two roots $u_+^{l,r}$ of the equation $D_+(u,\eps)=0$, as well as the fact that $D_+(u,\eps)>0$ for $u<u_+^l$ and $u>u_+^r$. The inequality $D_-(u,\eps)>0$ holds throughout the upper half-strip by virtue of the relation
\eq{010303}{
2x_2D_-(x)=3(a_3^+-a_3^-)(x_1+x_2-u_0)^2+12a_3^-x_2^2.
}

Thus, the only thing left to verify is that the integral curve $\ell_\eps$ lies entirely in the domain where $D_+>0$, i.\,e., on the left it reaches the upper boundary at $v_+^l\le u_+^l$. This follows, for example, from the fact that if we move along our integral curve from the point $(u_0,0)$ to the left toward the upper boundary, we cannot cross the isocline $X_0$, which on the interval between $(u_0,0)$ and $(u_+^l,\eps)$ lies below the isocline $X_1$. In our case this can be easily verified by direct computation, since $X_1$ is the line
\eq{020301}{
x_1-u_0=\bigg(1-2\sqrt{\frac{a_3^+}{a_3^+-a_3^-}}\bigg)x_2\,,
}
and $X_0$ is the upper branch of the hyperbola
\eq{020302}{
\Big(x_2+\frac{a_3^++a_3^-}{a_3^+-a_3^-}\Big)^2-\Big(x_1-u_0-\eps\Big)^2
=\frac{4a_3^+a_3^-}{(a_3^+-a_3^-)^2}\,.
}

Finally, the last degenerate case: $d=0$ and $a_3^-=0$. As before, the suitable integral curve $\ell_\eps$ connects the points $(u_+^r,\eps)$ and $(u_0,0)$. Thus, we can construct a herringbone extending from the point $(u_0+\eps,0)$ to the point $(u_+^r+\eps,\eps)$. Incidentally, note that under the conditions $d=0$ and $a_3^-=0$ we have $u_+^r+\eps=3\eps$ and $u_+^l+\eps=-\eps$. Since $2x_2D_-=3a_3^+(x_1+x_2-u_0)^2$, the inequality $D_-\ge0$ holds throughout the upper half-strip, while $D_+(u,\eps)>0$ holds outside the interval between the roots, i.\,e., for $u<-\eps$ and $u>3\eps$. Thus, one can construct two domains with right simple foliation: $\Om{R}(-\infty,u_0-\eps)$ and $\Om{R}(u_0+3\eps,+\infty)$. The herringbone $\Om{HB}\big((u_0+\eps,0),(u_0+4\eps,\eps)\big)$ foliates part of the space between them, and what remains is the trapezoid $\Om{Tz}\big(u_0,(u_0,0)\big)$ (the definition of the domain is given in formula~\eqref{241005}), which is a domain of bilinearity. Altogether, for all $\eps$ we obtain the foliation
\eq{040301}{
\begin{aligned}
\Oe=&\Om{R}(-\infty,u_0-\eps)\cup\Om{Tz}\big(u_0,(u_0,0)\big)\cup
\\
&\cup\Om{HB}\big((u_0+\eps,0),(u_0+4\eps,\eps)\big)
\cup\Om{R}(u_0+3\eps,\infty).
\end{aligned}
}

This foliation is shown in Figure~\ref{130402}.
\begin{figure}[h]
    \centering
    \includegraphics[scale = 0.4]{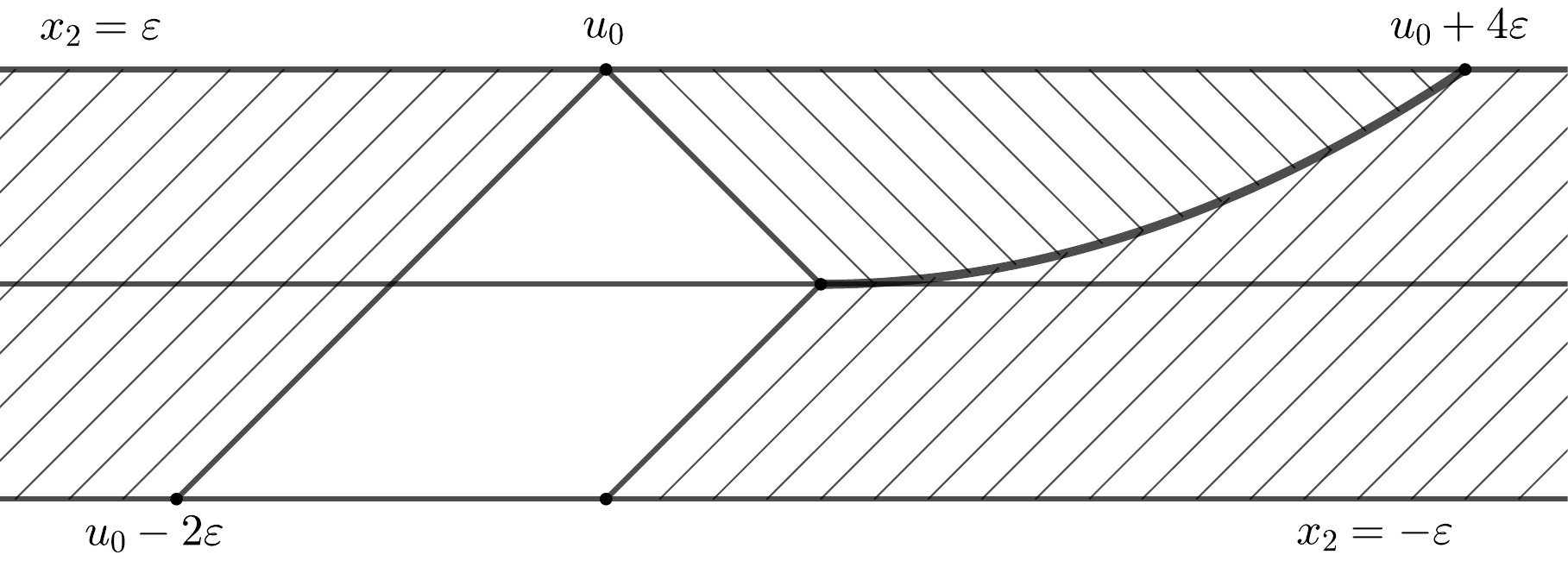}
    \caption{The foliation~\eqref{040301}.}
    \label{130402}
\end{figure}

\section{Survey of foliations\\ 
for polynomials of degree at most three}

In this section we collect together all the results describing the foliations that arise in the cases where the boundary functions have the form
$$
f_\pm(t)=a_3^\pm t^3+a_2^\pm t^2+a_1^\pm t+a_0^\pm.
$$
In order not to increase the number of cases under consideration unnecessarily, we shall not describe those variants that are obtained by symmetry about the vertical and/or horizontal axis. 
If it turns out that $|a_3^+|<|a_3^-|$, we change the sign of $x_2$, i.\,e., interchange $f_+$ and $f_-$. 
If after this we have $a_3^+<0$, we change the sign of $x_1$. Thus we may assume that $a_3^+\ge |a_3^-|$.

\subsection{$a_3^+=a_3^-=0$}\hss

We begin with the case where both polynomials have degree less than three. All variants were described in Section~3 of \cite{First}.

\subsubsection{$a_2^+=a_2^-$}\hss

In this case, for all $\eps$, one of the following three variants occurs:
\begin{enumerate}
    \item [\rm a)] If $a_1^+>a_1^-$, we have a right simple foliation $\Om{R}(-\infty,+\infty)$, shown in Figure~\ref{251001}.
\begin{figure}[h]
    \centering
    \includegraphics[scale = 0.35]{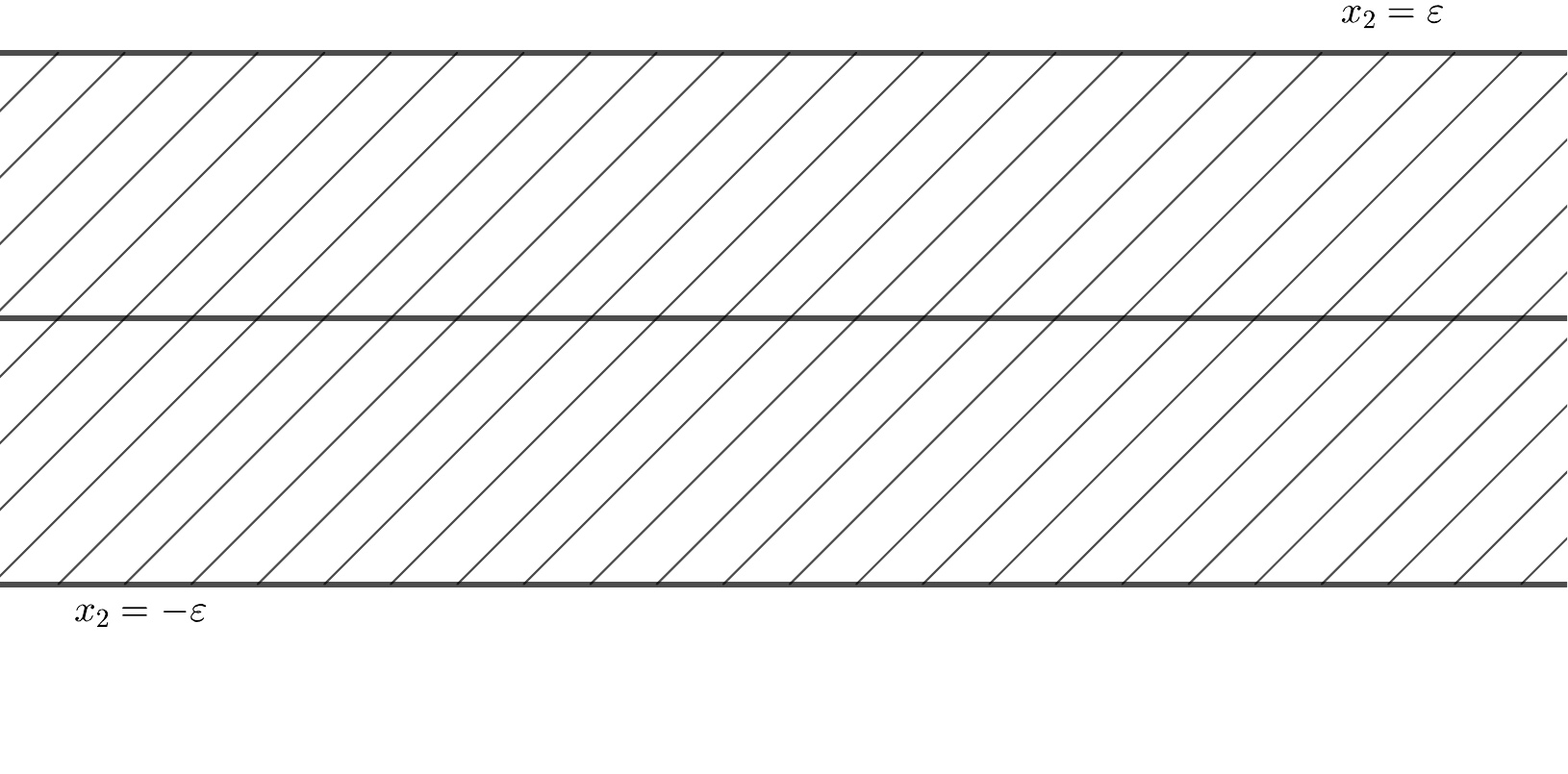}
    \caption{Right simple foliation.}
    \label{251001}
\end{figure}
    \item [\rm b)] If $a_1^+<a_1^-$, we have a left simple foliation $\Om{L}(-\infty,+\infty)$, shown in Figure~\ref{251002}.
\begin{figure}[h]
    \centering
    \includegraphics[scale = 0.36]{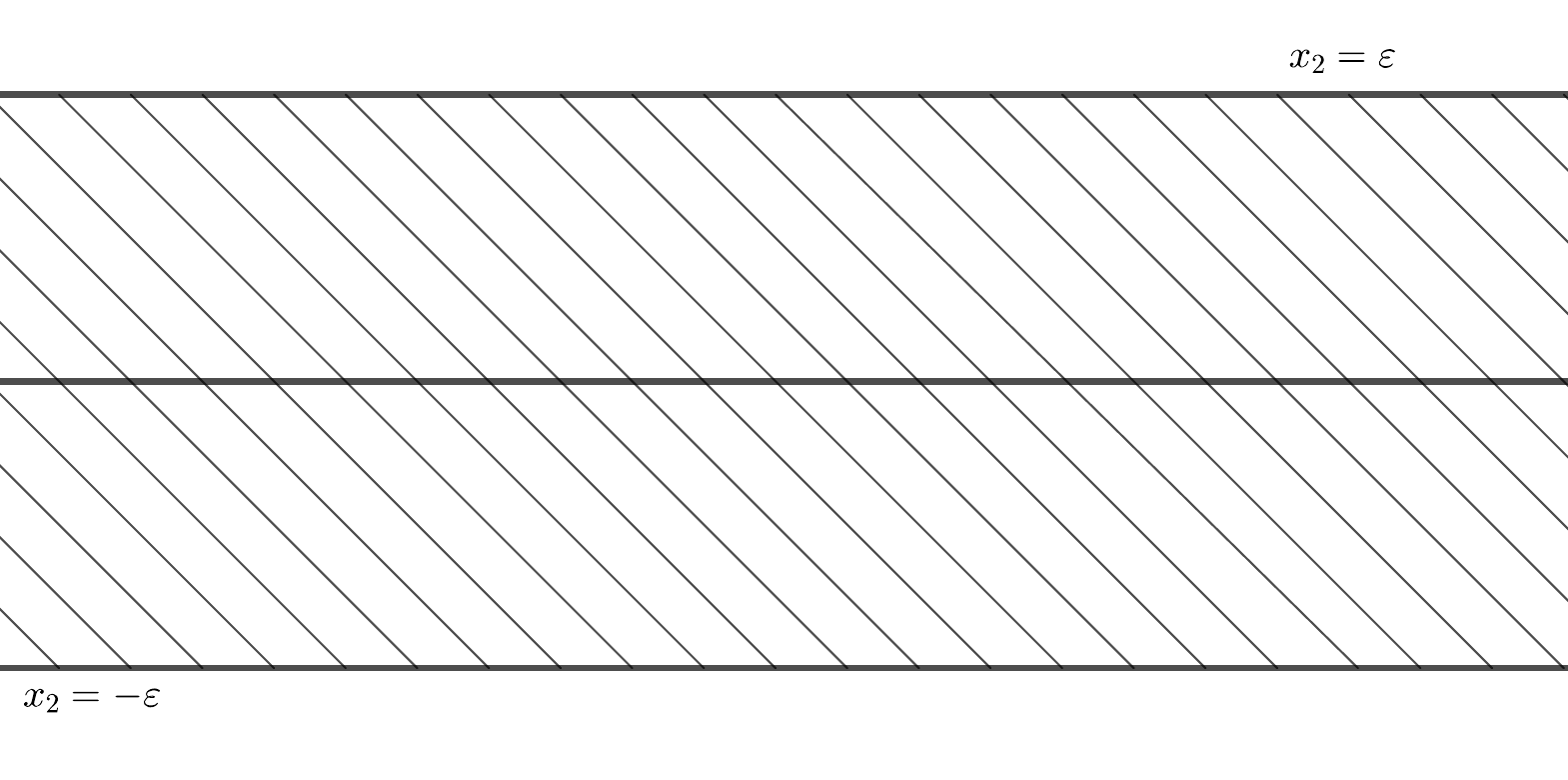}
    \caption{Left simple foliation.}
    \label{251002}
\end{figure}
    \item [\rm c)] If $a_1^+=a_1^-$, the Bellman function is bilinear throughout the strip.
\end{enumerate}

\subsubsection{$a_2^+\ne a_2^-$}\hss

The function $f'_+-f'_-$ is linear with nonzero slope, and it has a root at
\eq{291001}{
u_0=\frac{a_1^--a_1^+}{2(a_2^+-a_2^-)}.
}
\begin{enumerate}
    \item [\rm a)] If $a_2^+<a_2^-$, the foliation has the form
\eq{271001}{
\Om{R}(-\infty,u_0-\eps)\cup\Om{T}(u_0)\cup\Om{L}(u_0+\eps,+\infty).
}
It is shown in Figure~\ref{271002}.

\begin{figure}[h]
    \centering
    \includegraphics[scale = 0.3]{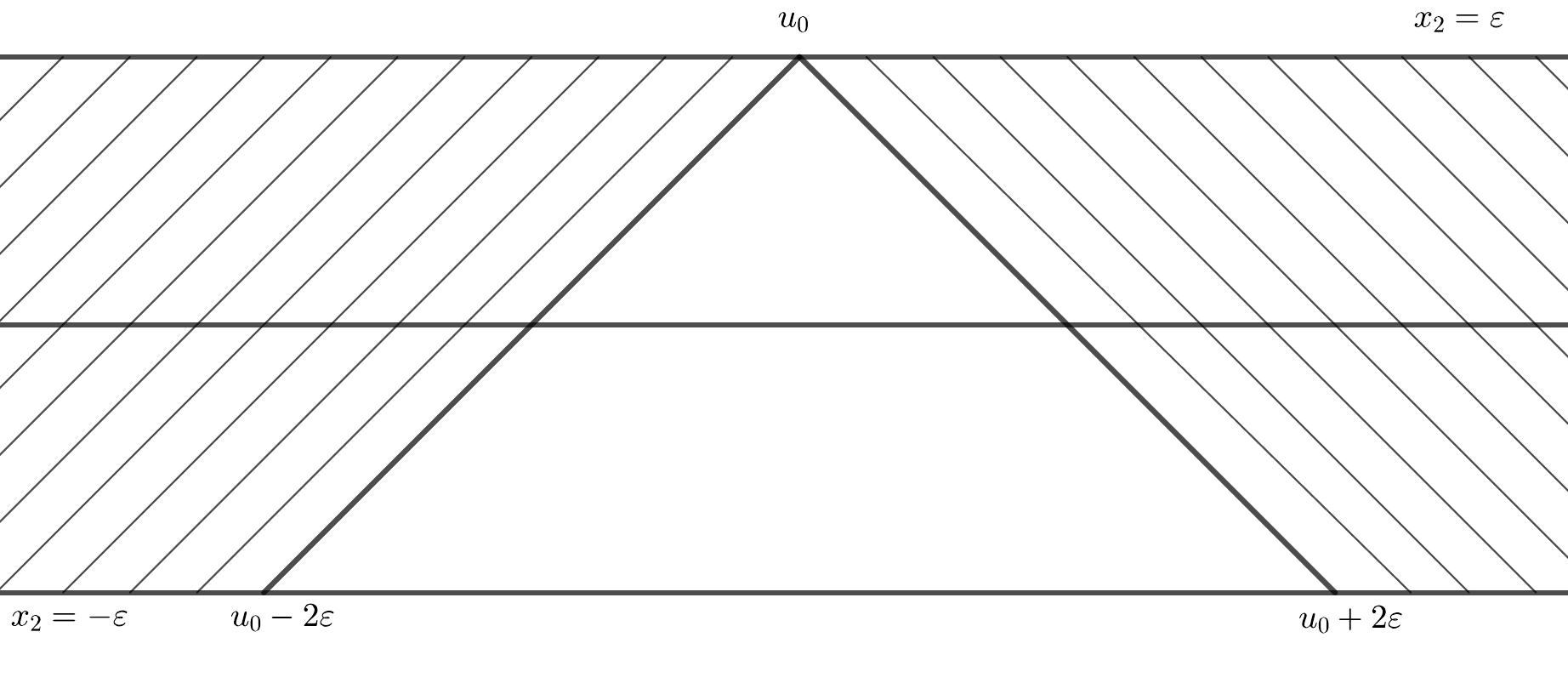}
    \caption{The foliation~\eqref{271001}.}
    \label{271002}
\end{figure}

    \item [\rm b)] If $a_2^+>a_2^-$, the foliation has the form
\eq{271003}{
\Om{L}(-\infty,u_0-\eps)\cup\overline{\Om{T}}(u_0)\cup\Om{R}(u_0+\eps,+\infty).
}
It is shown in Figure~\ref{271004}.

\begin{figure}[h]
    \centering
    \includegraphics[scale = 0.33]{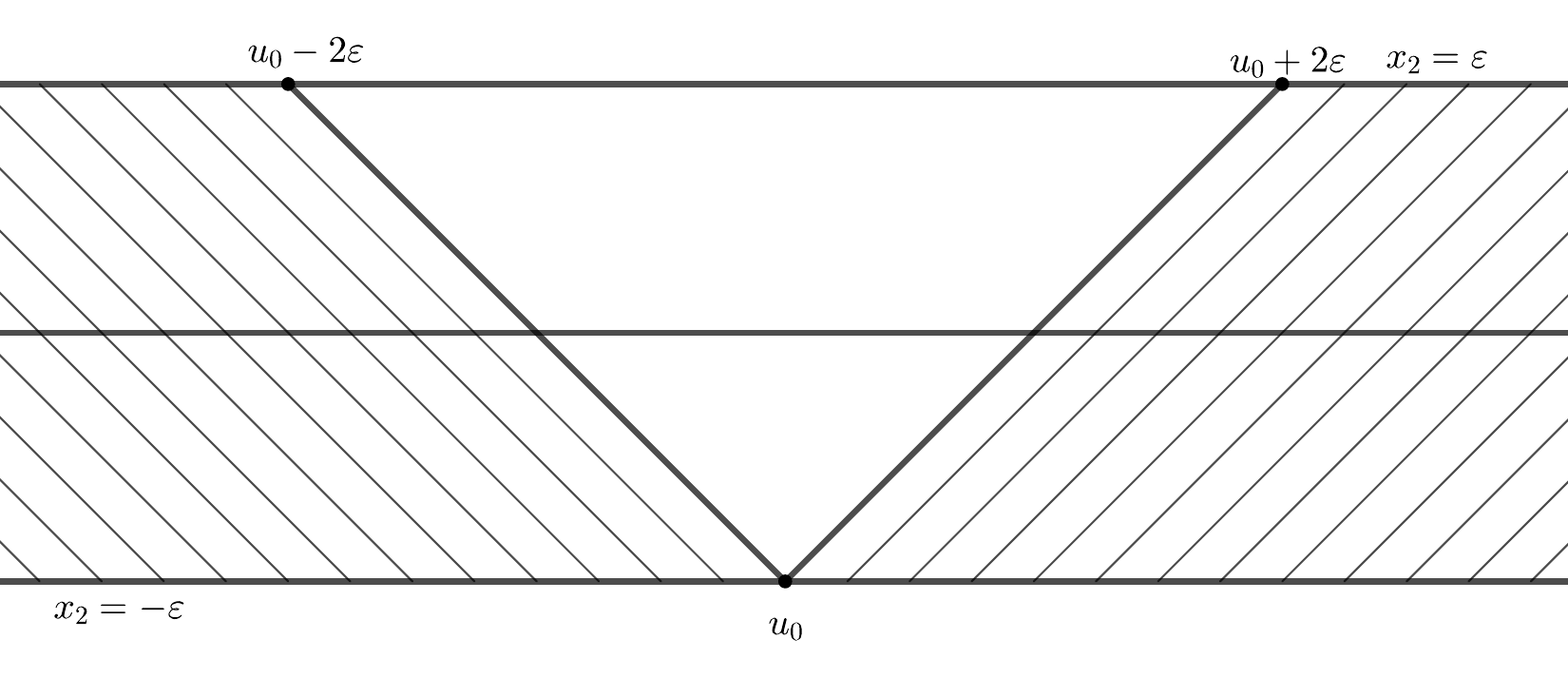}
    \caption{The foliation~\eqref{271003}.}
    \label{271004}
\end{figure}
\end{enumerate}

\subsection{$a_3^+=a_3^-\ne0$}\hss

This case was treated in Section~7 of \cite{First}.

\subsubsection{$a_2^+=a_2^-$}\hss

Set
\eq{281001}{
\eps_0=\sqrt{\frac{|a_1^+-a_1^-|}{12a_3}}.
}
\begin{enumerate}
    \item [\rm a)] If $\eps\le\eps_0$ and $a_1^+>a_1^-$, we have a right simple foliation $\Om{R}(-\infty,+\infty)$, shown in Figure~\ref{251001}.

    \item [\rm b)] If $\eps\le\eps_0$ and $a_1^+<a_1^-$, we have a left simple foliation $\Om{L}(-\infty,+\infty)$, shown in Figure~\ref{251002}.

    \item [\rm c)] If $\eps>\eps_0$, we have an infinite left horizontal herringbone, whose spine is the line
    $$
    x_2=\sign(a_1^+-a_1^-)\eps_0
    $$
    (see Fig.~\ref{281002}).

\begin{figure}[h]
    \centering\hspace{40pt}
    \includegraphics[scale = 0.35]{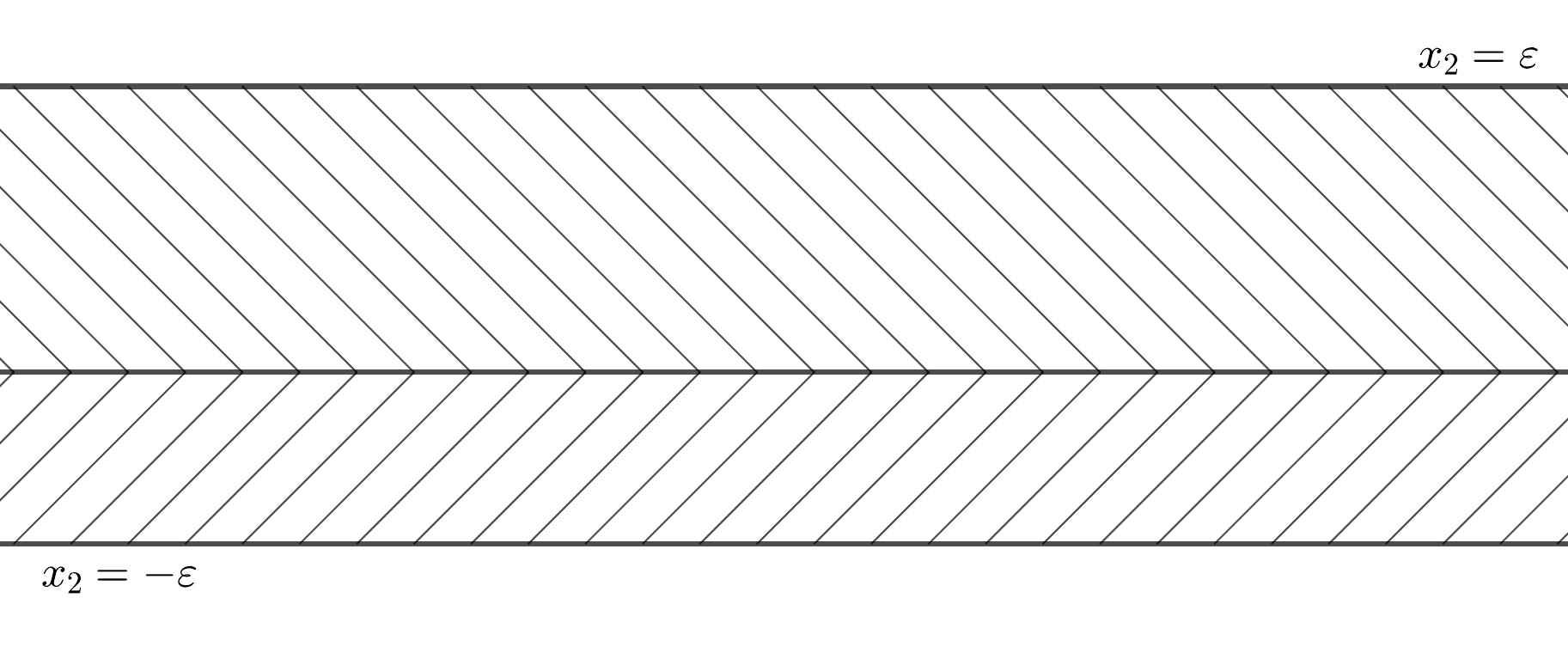}
    \caption{Infinite left horizontal herringbone.}
    \label{281002}
\end{figure}
\end{enumerate}

\subsubsection{$a_2^+\ne a_2^-$}\hss

In this case there is one root $u_0$ of the equation $f'_+=f'_-$ \textup{\bf(}see~\eqref{291001}\textup{\bf)}, and for all $\eps$ we can construct a fissure crossing the midline of the strip at the point $x_1=u_0+\eps$.
\begin{enumerate}
    \item [\rm a)] If $a_2^+>a_2^-$, the foliation has the form
    \eq{301001}{
    \Oe=\Om{L}(-\infty,v_-)\bigcup\Om{SW}(v_-,u_+)\bigcup\Om{R}(u_+,+\infty)\,,
}
where $u_+$ is the root of the equation $D_+(u,\eps)=0$, and $v_-$ is the first coordinate of the point at which the integral curve $\ell_\eps$ of the field from~(5.3), emerging from the point $(u_+,\eps)$, intersects the lower boundary of the strip $\Oe$.
This foliation is shown in Figure~\ref{301002}.
\begin{figure}[h]
    \centering
    \includegraphics[scale = 0.4]{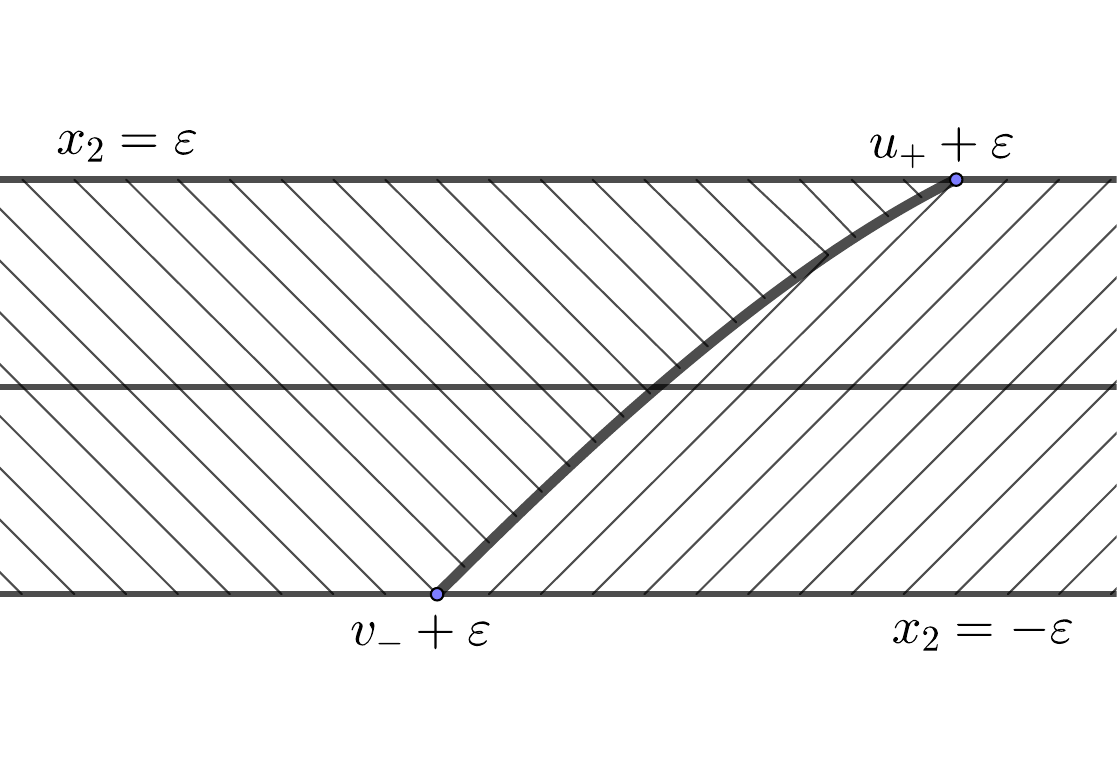}
    \caption{The foliation with an SW-fissure~\eqref{301001}.}
    \label{301002}
\end{figure}

    \item [\rm b)] If $a_2^+<a_2^-$, the foliation has the form
    \eq{301003}{
    \Oe=\Om{R}(-\infty,v_+)\bigcup\Om{NW}(v_+,u_-)\bigcup\Om{L}(u_-,+\infty)\,,
}
where $u_-$ is the root of the equation $D_-(u,-\eps)=0$, and $v_+$ is the first coordinate of the point at which the integral curve $\ell_\eps$ of the field from~(5.3), emerging from the point $(u_-,-\eps)$, intersects the upper boundary of the strip $\Oe$.
This foliation is shown in Figure~\ref{301004}.
\begin{figure}[h]
    \centering
    \includegraphics[scale = 0.35]{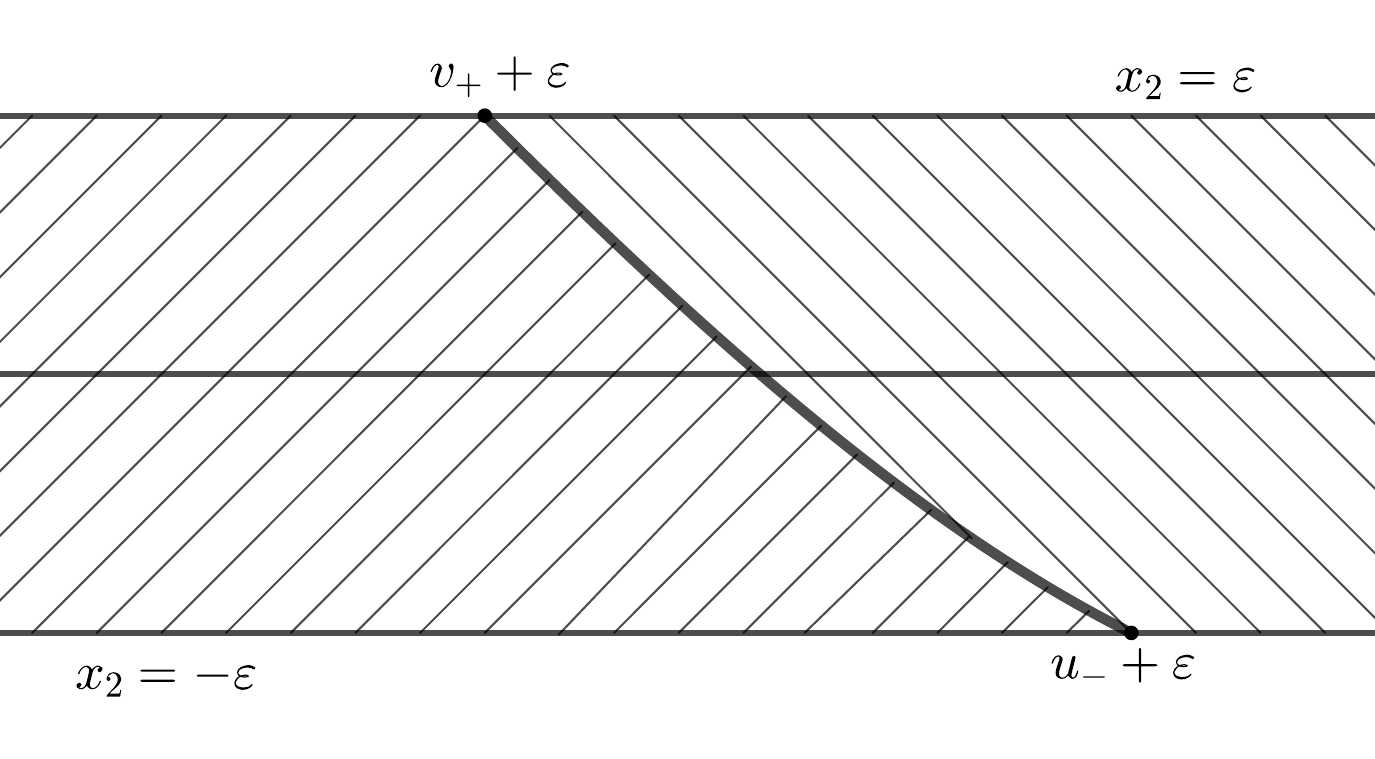}
    \caption{The foliation with an NW-fissure~\eqref{301003}.}
    \label{301004}
\end{figure}
\end{enumerate}

\subsection{$a_3^+\ne a_3^-$, 
$(a_2^+-a_2^-)^2>3(a_3^+-a_3^-)(a_1^+-a_1^-)$}\hss

\subsubsection{$a_3^->0$}\hss

For small $\eps$, the foliation was described in Section~7 (\cite{First}), and for large $\eps$, in Section~\ref{141003}. The critical value $\eps_0$ is the solution of the equation $u_{-1}(\eps)=v_{-2}(\eps)$.
\begin{enumerate}
    \item[\rm a)] If $\eps\le\eps_0$, we have two fissures (see Fig.~\ref{090401})
\eq{020401}{
\begin{aligned}
\Oe&=\Om{R}(-\infty,v_{+1})\cup\Om{NW}(v_{+1},u_{-1})\cup\Om{L}(u_{-1},v_{-2})\cup
\\
&\qquad\cup\Om{SW}(v_{-2},u_{+2})\cup\Om{R}(u_{+2},+\infty)\,,
\end{aligned}
}
\begin{figure}[h]
    \centering
    \includegraphics[scale = 0.45]{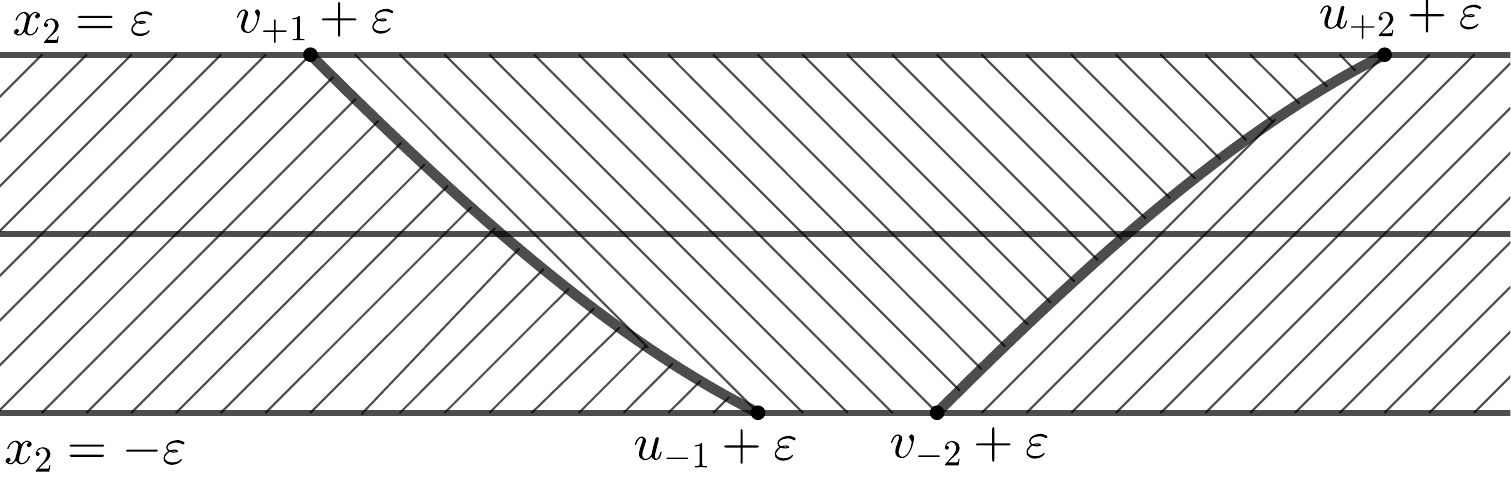}
    \caption{The foliation~\eqref{020401}.}
    \label{090401}
\end{figure}

    \item[\rm b)] If $\eps>\eps_0$, the fissures merge into a major pocket (see Fig.~\ref{090402})
\eq{160202}{
\Oe=\Om{R}(-\infty,v_{+1})\cup\Om{HB}\big((v_{+1}+\eps,\eps),(u_{+2}+\eps,\eps)\big)\cup\Om{R}(u_{2+},+\infty)\,.
}
\begin{figure}[h]
    \centering
    \includegraphics[scale = 0.45]{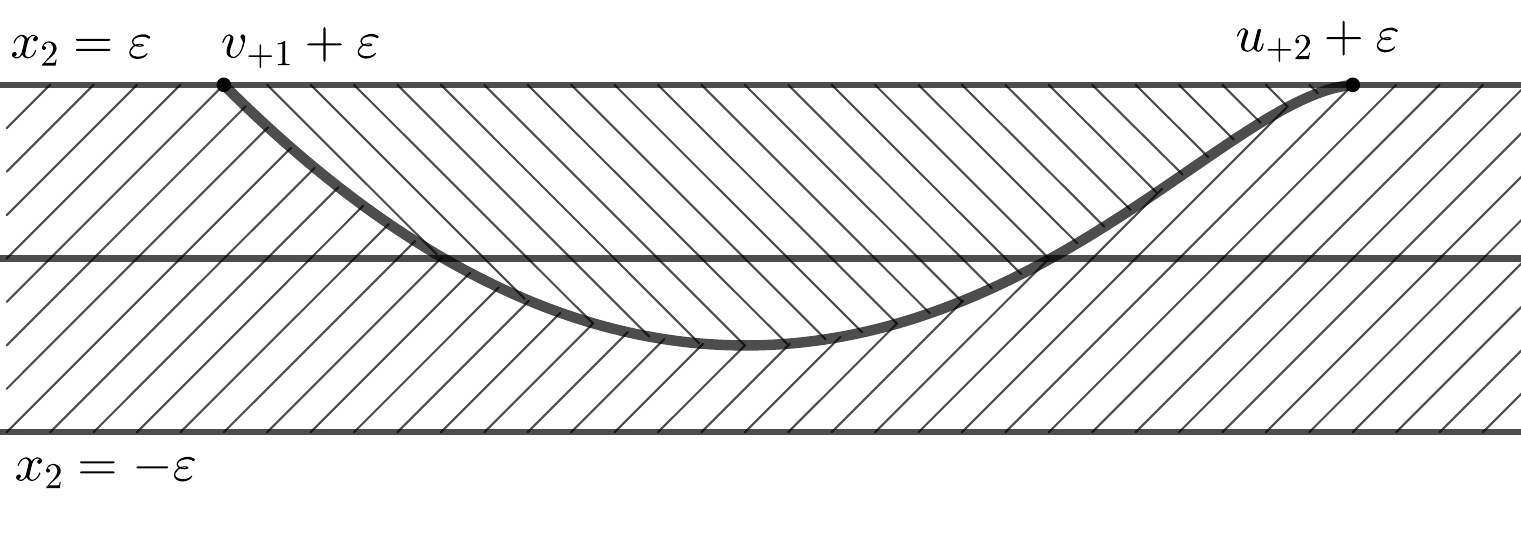}
    \caption{The foliation~\eqref{160202}.}
    \label{090402}
\end{figure}
\end{enumerate}

\subsubsection{$a_3^-=0$}\hss

The foliation was described in Section~\ref{141000}. 
The critical value $\eps_1$ is the root of the equation 
$u_{-1}(-\eps)=v_{-2}(\eps)$.
\begin{enumerate}
    \item[\rm a)] If $\eps\le\eps_1$, we have a triangle of bilinearity and one fissure, described in equality~\eqref{161001}:
$$
\begin{aligned}
\Oe=&\;
\Om{R}(-\infty,u_{01}-\eps)\cup\Om{T}(u_{01})\cup
\Om{L}(u_{01}+\eps,v_{-2})
\\
\quad&\cup
\Om{HB}\big((v_{-2}\!+\eps,-\eps),(u_{+2}\!+\eps,\eps)\big)
\cup\Om{R}(u_{+2},\infty).
\end{aligned}
$$
The foliation is shown in Figure~\ref{241002}.
    \item[\rm b)] If $\eps>\eps_1$, the fissure ``cuts off'' a corner of the triangle, turning it into a trapezoid \textup{\bf(}equality~\eqref{241003}\textup{\bf)}:
$$
\Oe=\Om{R}(-\infty,u_{01}-\eps)\cup\Om{Tz}(u_{01},C)
\cup\Om{HB}\big(C,(u_{+2}\!+\eps,\eps)\big)
\cup\Om{R}(u_{+2},\infty).
$$
\end{enumerate}
The foliation is shown in Figure~\ref{241004}.

\subsubsection{$a_3^-<0$}\hss

For small $\eps$, the foliation was described in Section~7 
(\cite{First}), and for large $\eps$, in Section~12 
(\cite{Second}). The critical value $\eps_2$ is the 
solution of the equation $v_{+1}(\eps)=v_{-2}(\eps)$.

\begin{enumerate}
    \item[\rm a)] If $\eps\le\eps_2$, we have two fissures 
    \textup{\bf(}see formula~(12.2)\footnote{Note that in 
    formula~(12.2) the indices NE and SW of the fissures are interchanged.}\textup{\bf)}:
\eq{040304}{
\begin{aligned}
\Oe=&\Om{R}(-\infty,u_{-1})\cup\Om{NE}(u_{-1},v_{+1})\cup
\Om{L}(v_{+1},v_{-2})\cup
\\
&\cup\Om{SW}(v_{-2},u_{+2})\cup\Om{R}(u_{+2},+\infty)\,.
\end{aligned}
}
The foliation is shown in Figure~\ref{100401}.
\begin{figure}[h]
    \centering
    \includegraphics[scale = 0.35]{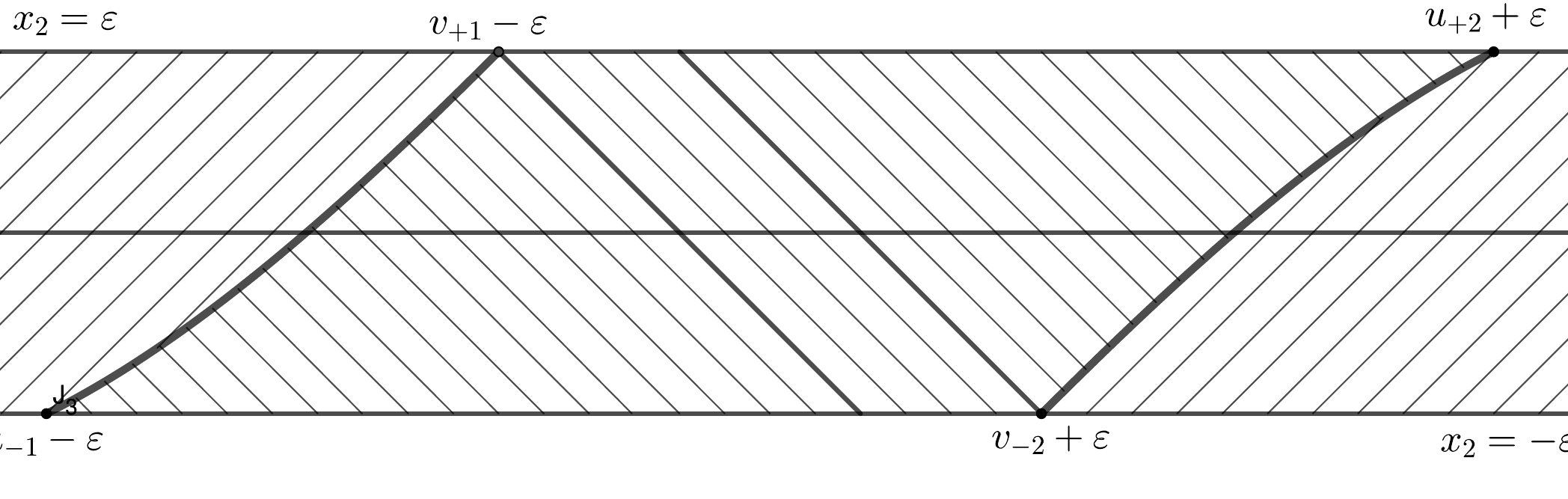}
    \caption{The foliation~\eqref{040304}.}
    \label{100401}
\end{figure}

    \item[\rm b)] If $\eps>\eps_2$, a rectangle of bilinearity appears between the fissures. The foliation is described in equalities~(12.3) or~(12.4):
\eq{040305}{
\begin{aligned}
\Oe=\Om{R}(-\infty,u_{-1})&\cup\Om{HB}\big((C_1-\eps,-C_2),(u_{-1}\!\!-\eps,-\eps)\big)\:\cup
\\
\cup\:\Om{Rect}(C)&\cup\Om{HB}\big((C_1+\eps,C_2),(u_{+2}\!+\eps,\eps)\big)\cup\Om{R}(u_{+2},+\infty)\,,
\end{aligned}
}
\end{enumerate}
The foliation is shown in Figure~\ref{110401}.
\begin{figure}[h]
    \centering
    \includegraphics[scale = 0.3]{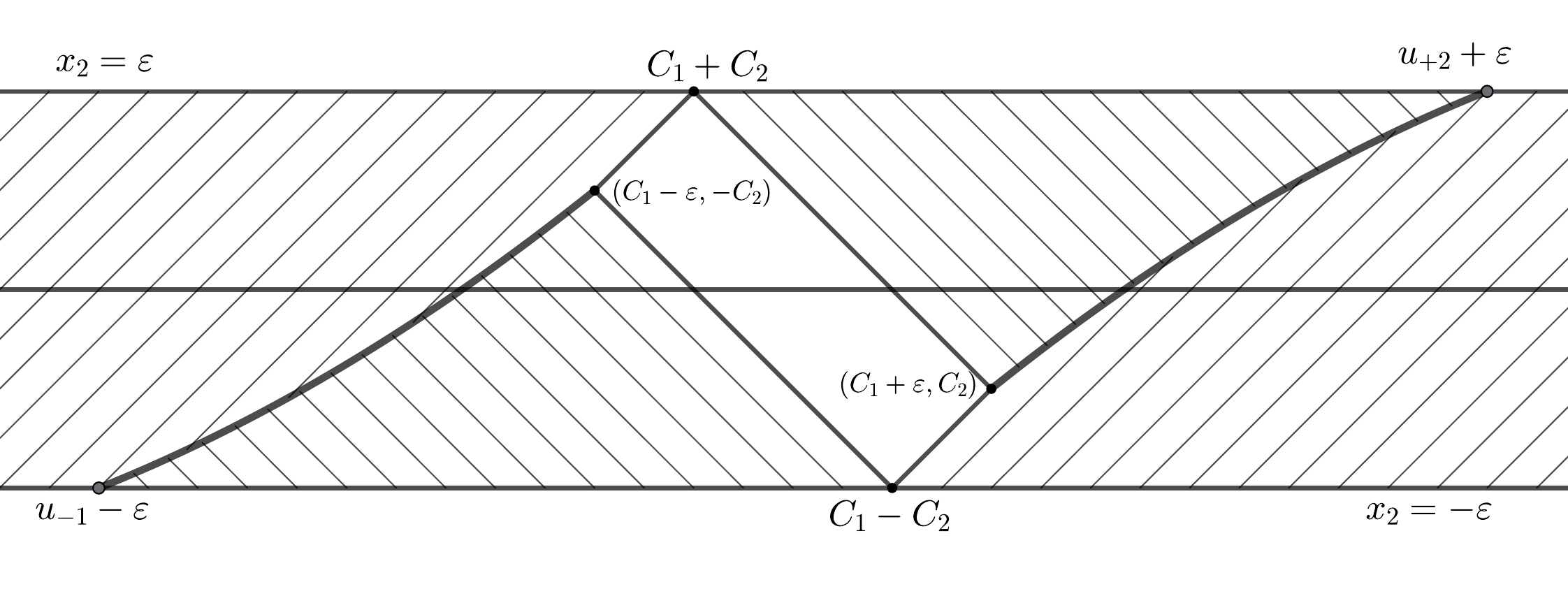}
    \caption{The foliation~\eqref{040305}.}
    \label{110401}
\end{figure}

Here $C=C(\eps)$ is the intersection point of the curves $\ell_\eps^+$ 
and $\bar\ell_\eps^-$, where $\ell_\eps^+$ is the integral 
curve of the field from~(5.3) starting from the point $(u_{+2},\eps)$ with 
positive slope (i.\,e., the curve that generates the herringbone 
on the right), and $\bar\ell_\eps^-$ is the curve symmetric with respect to the $x_1$-axis 
to the integral curve $\ell_\eps^-$ of the field from~(8.22), which starts from the point $(u_{-1},-\eps)$ with positive 
slope (i.\,e., the curve that generates the herringbone on the left).

\subsection{$a_3^+\ne a_3^-$, 
$(a_2^+-a_2^-)^2<3(a_3^+-a_3^-)(a_1^+-a_1^-)$}\hss

\subsubsection{$a_3^-\ge0$}\hss

This case was treated in Section~9 \textup{\bf(}see~\cite{Second}\textup{\bf)}. 
Formula~(9.2) defined the value of the parameter $\eps_0^+$,
at which the foliation changes.

\begin{enumerate}
    \item[\rm a)] If $\eps\le\eps_0^+$, the entire strip is filled with a right
    simple foliation:
\eq{050301}{
\Oe=\Om{R}(-\infty,+\infty)\,.
}
See Fig.~\ref{251001}.
    \item[\rm b)] For $\eps>\eps_0^+$, a minor pocket appears on the
    upper boundary and the foliation takes the form:
\eq{050302}{
\Oe=\Om{R}(-\infty, v_+)\cup\Om{HB}\big((v_+,\eps),(u_+^r,\eps)\big)\cup\Om{R}(u_+^r,+\infty)\,.
}
The foliation is shown in Figure~\ref{110402}.
\begin{figure}[h]
    \centering
    \includegraphics[scale = 0.25]{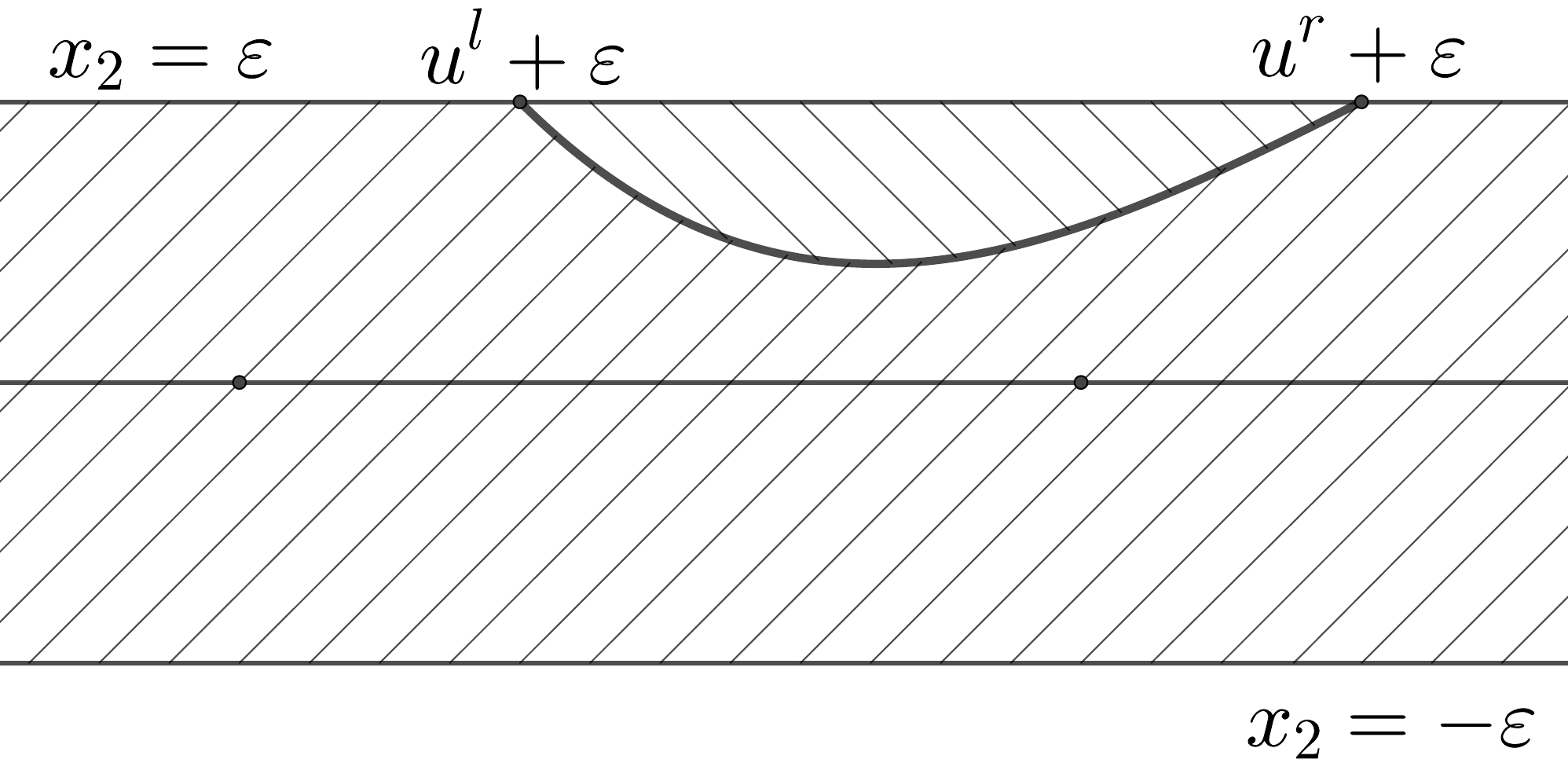}
    \caption{The foliation~\eqref{050302}.}
    \label{110402}
\end{figure}
\end{enumerate}

\subsubsection{$a_3^-<0$}\hss

This case was treated in Section~10 \textup{\bf(}see~\cite{Second}\textup{\bf)}. 
Formula~(10.1) defined the value of the parameter $\eps_0^-$,
when the foliation changes once more. The final change of the foliation occurs at $\eps=\eps_2$.

For small $\eps$, the foliation develops in the same way as in the previous case.
\begin{enumerate}
    \item[\rm a)] If $\eps\le\eps_0^+$, the entire strip is filled with a right
    simple foliation~\eqref{050301}. See Fig.~\ref{251001}.
    \item[\rm b)] If $\eps_0^+<\eps\le\eps_0^-$, a minor pocket appears on the upper boundary and the foliation has the form~\eqref{050302}. See Fig.~\ref{110402}.
\end{enumerate}    
Subsequently, the situation may develop differently depending on the relative position of the second end $v_+(\eps)$ of the extremal $\ell_\eps^+$ and the root $u_0^-$ of the equation $D_-(u,\eps)=0$ appearing on the upper boundary:
\eq{050303}{
u_0^-=-\eps_0^--\frac{a_2^+-a_2^-}{3(a_3^+-a_3^-)}\,.
}
\begin{enumerate}
    \item[\rm 1.] If $v_+(\eps_0^-)\ge u_0^-$, then 
 \begin{enumerate}
    \item[\rm c1)] up to the moment $\eps_2$ defined by the condition $v_+(\eps_2)=u_-^l(\eps_2)$, the foliation~\eqref{050302} remains in force,
    \item[\rm d1)] and for $\eps>\eps_2$, a second (now right) herringbone appears, together with a rectangle of bilinearity separating the two herringbones:
\eq{070301}{
\begin{aligned}
\Oe=\Om{R}(-\infty,u_-^l)&\cup\Om{HB}\big((C_1-\eps,-C_2),(u_-^l-\eps,-\eps)\big)\:\cup
\\
\cup\:\Om{Rect}(C)&\cup\Om{HB}\big((C_1+\eps,C_2),(u_+^r+\eps,\eps)\big)\cup\Om{R}(u_+^r,+\infty)\,.
\end{aligned}
}
\end{enumerate}
The foliation is shown in Figure~\ref{120401}.
\begin{figure}[h]
    \centering
    \includegraphics[scale = 0.37]{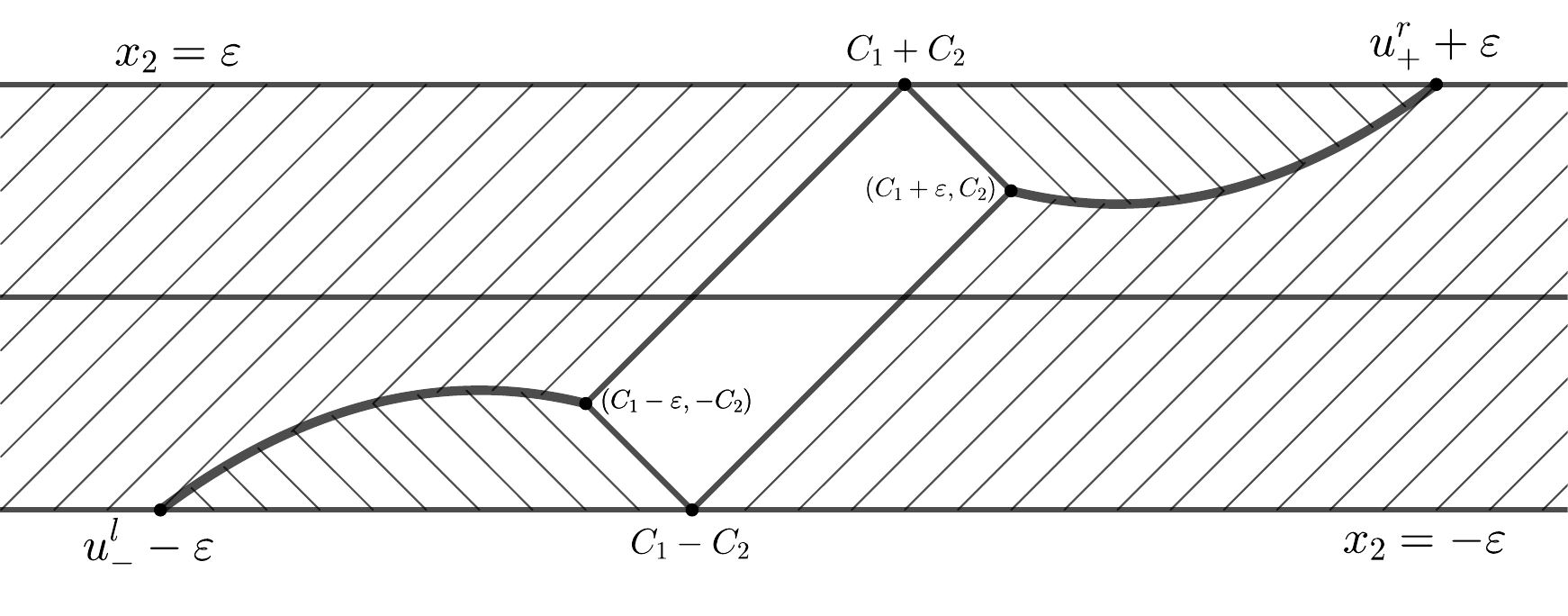}
    \caption{The foliation~\eqref{070301}.}
    \label{120401}
\end{figure}

Here $C=C(\eps)$ is the intersection point of the curves $\ell_\eps^+$ 
and $\bar\ell_\eps^-$, where $\ell_\eps^+$ is the integral 
curve of the field from~(5.3) starting from the point $(u_+^r,\eps)$ with 
positive slope (i.\,e., the curve that generates the herringbone 
on the right), and $\bar\ell_\eps^-$ is the curve symmetric with respect to the $x_1$-axis to the 
integral curve $\ell_\eps^-$ of the field from~(8.22), which starts from the point $(u_-^l,-\eps)$ with positive 
slope (i.\,e., the curve that generates the herringbone on the left).

   \item[\rm 2.] If $v_+(\eps_0^-)< u_0^-$, then 
 \begin{enumerate}
    \item[\rm c2)] at $\eps=\eps_0^-$, a right herringbone appears on the lower boundary, which exists simultaneously with the left herringbone on the upper boundary up to the moment $\eps_2$ defined by the condition $v_+(\eps_2)=v_-(\eps_2)$,
\eq{070302}{
\begin{aligned}
\Oe=\Om{R}(-\infty,u_-^l)&\cup\Om{HB}\big((v_-\!\!-\eps,-\eps),(u_-^l\!\!-\eps,-\eps)\big)\:\cup
\\
\cup\:\Om{R}(v_-,v_+)&\cup\Om{HB}\big((v_+\!+\eps,\eps),(u_+^r\!+\eps,\eps)\big)\cup\Om{R}(u_+^r,+\infty)\,.
\end{aligned}
}
The foliation is shown in Figure~\ref{120402}.
\begin{figure}[h]
    \centering
    \includegraphics[scale = 0.3]{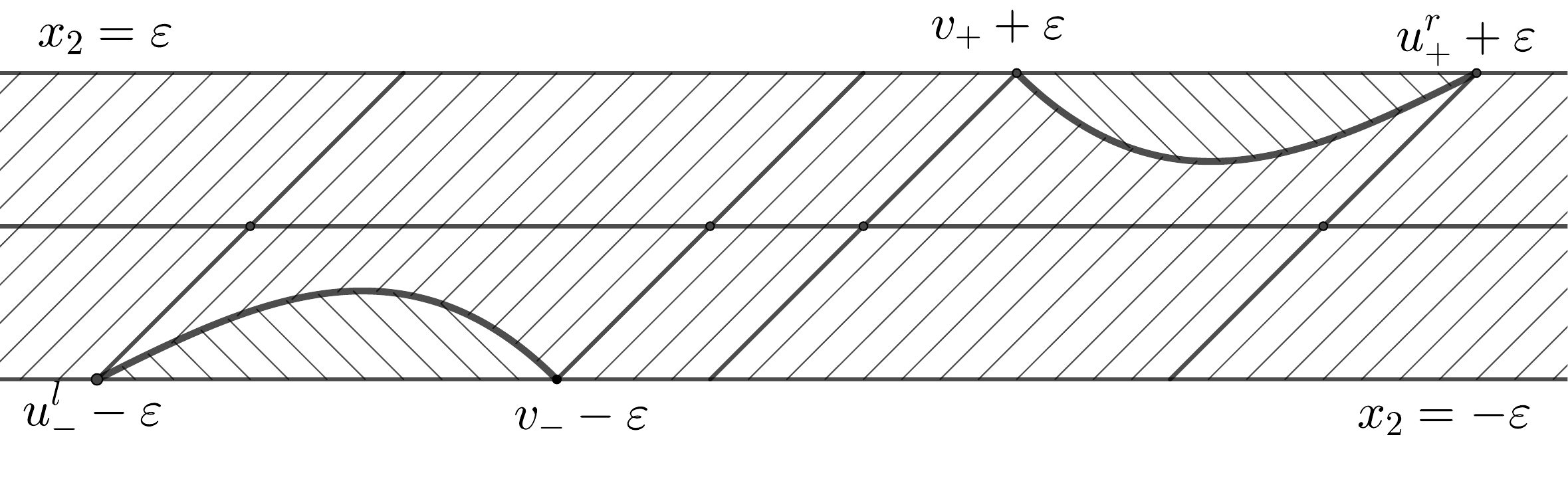}
    \caption{The foliation~\eqref{070302}.}
    \label{120402}
\end{figure}
    \item[\rm d2)] at $\eps=\eps_2$ the domain $\Om{R}(v_-,v_+)$ collapses, and for $\eps>\eps_2$ a rectangle of bilinearity separating the two herringbones appears, with the foliation taking the form~\eqref{070301} (Fig.~\ref{120401}).
 \end{enumerate}
\end{enumerate}

\subsection{$a_3^+\ne a_3^-$, 
$(a_2^+-a_2^-)^2=3(a_3^+-a_3^-)(a_1^+-a_1^-)$}\hss

\subsubsection{$a_3^->0$}\hss 

For all $\eps$, the foliation~\eqref{010302} holds, which is called a medium pocket. It is shown in Figure~\ref{130401}.
\begin{figure}[h]
    \centering
    \includegraphics[scale = 0.3]{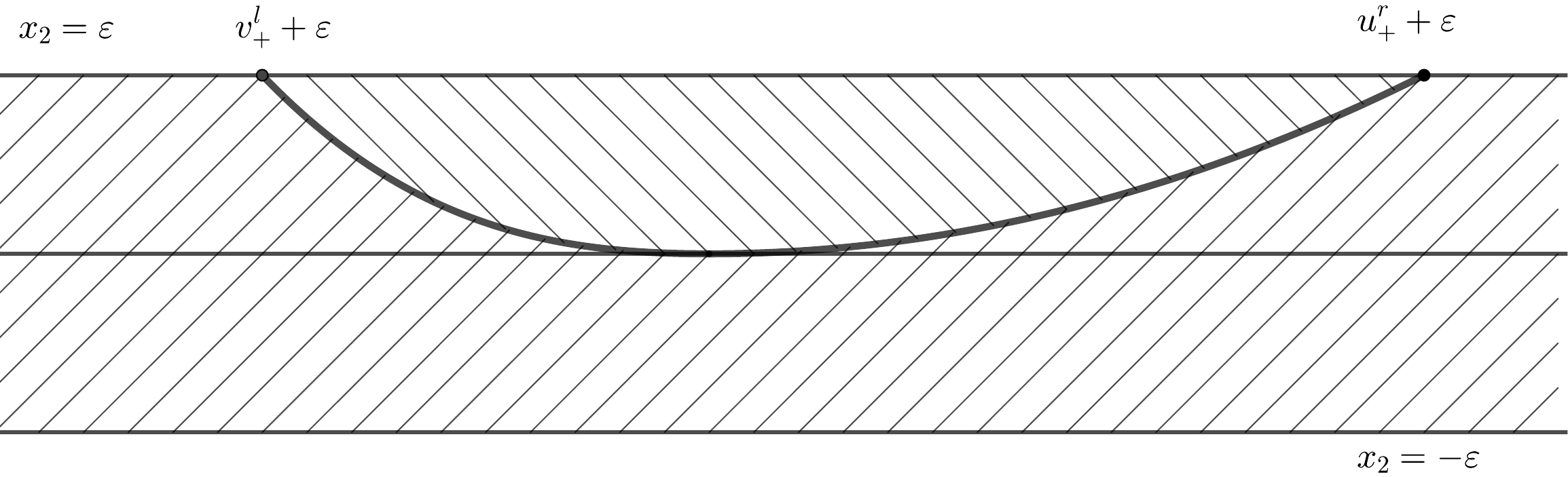}
    \caption{The foliation~\eqref{010302}.}
    \label{130401}
\end{figure}

\subsubsection{$a_3^-=0$}\hss 

For all $\eps$, the foliation~\eqref{040301} holds, which is shown in Figure~\ref{130402}.

\subsubsection{$a_3^-<0$}\hss 

This case was already treated in Section~12 (see~\cite{Second}), where it was shown that for all $\eps$ the foliation
\eq{070303}{
\begin{aligned}
\Oe=\Om{R}(-\infty,u_{-1})&\cup\Om{HB}\big((u_0-\eps,0),(u_{-1}\!\!-\eps,-\eps)\big)\cup\Om{Rect}\big((u_0,0)\big)\cup
\\
&\cup\Om{HB}\big((u_0+\eps,0),(u_{+2}\!+\eps,\,\eps)\big)\cup\Om{R}(u_{+2},+\infty)\,.
\end{aligned}
}
holds. This foliation is shown in Figure~\ref{140802}.

\section*{Acknowledgments}
The author is grateful to P.~B.~Zatitskii, whose critical remarks helped to improve the present text.

\vskip40pt

\section*{Formulas and statements from \cite{First} and \cite{Second}\\
used in the present paper}
$$
T'=\frac{(\eps-T)D\ut{L}_--(\eps+T)D\ut{L}_+}{(\eps-T)D\ut{L}_-+(\eps+T)D\ut{L}_+};
\leqno(4.32)
$$

$$
\begin{cases}
\dot x_1=\eps(f_+'-f_-')-x_2(\eps-x_2)f_-''-x_2(\eps+x_2)f_+'',
\\
\dot x_2=x_2(f_-'-f_+')-x_2(\eps-x_2)f_-''+x_2(\eps+x_2)f_+'';
\end{cases}
\leqno(5.3)
$$

$$
J(x)\!=\!
\begin{pmatrix}
\scriptstyle\eps(f_+''-f_-'')-x_2(\eps-x_2)f_-'''-x_2(\eps+x_2)f_+'''&
\scriptstyle 2x_2(f_-''-f_+'')+x_2(\eps-x_2)f_-'''-x_2(\eps+x_2)f_+'''
\\
\scriptstyle x_2(f_-''-f_+'')-x_2(\eps-x_2)f_-'''+x_2(\eps+x_2)f_+'''&
\scriptstyle(f_-'-f_+')-(\eps-x_2)(f_-''-x_2f_-''')+(\eps+x_2)(f_+''+x_2f_+''')
\end{pmatrix};
\leqno(5.5)
$$


$$
\begin{cases}
\dot x_1=(\eps+x_2)\big[\half(f_-'-f_+')-x_2f_+''\big]
+(\eps-x_2)\big[\half(f_-'-f_+')-x_2f_-''\big],
\\
\dot x_2=(\eps+x_2)\big[\half(f_-'-f_+')-x_2f_+''\big]
-(\eps-x_2)\big[\half(f_-'-f_+')-x_2f_-''\big],
\end{cases}
\leqno(8.22)
$$
\bigskip

{\bf Proposition 8.8.}

Let the function $D_+$ vanish at the point $(u,\eps)$. Assume that
$$
\kappa_+^{\LL}\df\frac{2f'''_+(u+\eps)}{f''_+(u+\eps)-f''_-(u-\eps)-2\eps f'''_+(u+\eps)}>0\,. 
\leqno(8.20)
$$
Then, while the curve $\ell_\eps$ goes inside the domain where $x_2D_+D_->0$,
its slope is strictly between $-1$ and $1$ (i.\,e., it can be considered as a graph of a function of the argument $x_1$). Moreover, for $\eps'<\eps''$ the curve $\ell_{\eps'}$ goes strictly above the curve $\ell_{\eps''}$ while they are inside this domain.
\bigskip

{\bf Remark 8.12.}

In the proposition just proved, it was assumed that we consider the curve $\ell_\eps$ only in the upper or lower half of the strip. However, it is not difficult to extend this assertion to the other half if the integral curve $\ell_\eps$ crosses the midline of the strip at the point $(u_0,0)$, where $u_0$ is a simple root of the equation $f'_+(u)=f'_-(u)$. In this case, upon passing to the other half of the strip, the relations over or under are reversed. The general rule can be formulated as follows: the larger $\eps$, the closer the curve $\ell_\eps$ is to the midline of the strip, $x_2=0$.

$$
\eps_0=\eps_0^+\df\sqrt{\frac1{12a_3^+}
\Big[a_1^+-a_1^--\frac{(a_2^+-a_2^-)^2}{3(a_3^+-a_3^-)}\Big]}\,.
\leqno(9.2)
$$

$$
2x_2D_+(x)=3(a_3^+-a_3^-)(x_1-x_2)^2+2(a_2^+-a_2^-)(x_1-x_2)+(a_1^+-a_1^-)-12a_3^+x_2^2;
\leqno(9.3)
$$

$$
2x_2D_-(x)=3(a_3^+-a_3^-)(x_1+x_2)^2+2(a_2^+-a_2^-)(x_1+x_2)+(a_1^+-a_1^-)+12a_3^-x_2^2;
\leqno(9.4)
$$

$$
\eps_0^-=\sqrt{\frac1{12|a_3^-|}\Big[a_1^+-a_1^--\frac{(a_2^+-a_2^-)^2}{3(a_3^+-a_3^-)}\Big]}=
\sqrt{\frac{a_3^+}{|a_3^-|}}\eps_0^+\,.
\leqno(10.1)
$$

\vskip 50pt

St.-Petersburg Department of Steklov Mathematical Institute

St.-Petersburg State University

\bigskip

vasyunin@pdmi.ras.ru
\end{document}